\newcommand{\vv}{\mathbf}
\title{
On Large Delays in Multi-Server Queues with Heavy Tails
}
\keywords{FCFS multi-server queue;
stationary waiting time;
heavy tails;
large deviations;
long tailed distribution;
subexponential distribution;
existence of moments}
\begin{document}
\maketitle

\begin{abstract}
We present upper and lower bounds for
the tail distribution of the stationary waiting
time $D$ in the stable $GI/GI/s$ FCFS queue.
These bounds depend on the value of the
traffic load $\rho$ which is the ratio of mean service and
mean interarrival times.
For service times with
intermediate
regularly varying tail distribution
the bounds are exact up to a constant, and we are able
to establish a ``principle of $s-k$ big jumps'' in this case
(here $k$ is the integer part of $\rho$),
which gives the most probable way for the stationary waiting
time to be large
.

Another corollary of the bounds obtained is to
provide a new proof of necessity and sufficiency
of conditions for the existence of moments of the stationary
waiting time.
\end{abstract}

\normalsize

\section{Introduction and main results.}\label{intro}

We consider a {\it first-come-first-served}
multi-server system with $s$ identical servers.
Let $\tau$ be a typical interarrival time and
$\sigma$ a typical service time.
Independent identically distributed sequences
of interarrival times $\{\tau_n\}$ with mean
$a={\mathbb E}\tau$ and service times $\{\sigma_n\}$
with mean $b={\mathbb E}\sigma$ are assumed to be mutually independent.
We also assume throughout the paper that the distribution
of $\sigma$ has unbounded support, i.e.
$B(x):={\mathbb P}\{\sigma\le x\}<1$ for all $x$, and that the
system is {\it stable}, i.e. $\rho := b/a < s$.

There are two equivalent ways to describe
the dynamics of a multi-server system.
First, we may assume that customers form a single
queue in front of all servers, and that the first customer
in the queue moves immediately to a server which
becomes idle. Second, we may assume that customers
form $s$ individual queues (lines) -- one queue for
each server, service times of customers become
known upon their arrival, and each arriving
customer is directed to the line with a minimal
total workload (we also assume that queues are
numbered, and if there are more than one minimal
workloads, then a customer chooses the one with the
minimal number). In the rest of the paper,
we mostly follow the second description of the model.

For $n=1$, $2$, \ldots, let $\vv V_n=(V_{n1},\ldots,V_{ns})$
be the vector of residual workloads in lines $1$, \ldots, $s$
which are observed by the $n$-th customer
upon its arrival into the system.
The value of $D_n:=\min\{V_{nj},j\le s\}$ is the waiting time,
or the delay which customer $n$ experiences.
The $n$-th customer joins the $i_n$-th line. Then
$$
i_n=\min\{i:V_{ni}=D_n\}
$$
and
\begin{eqnarray*}
V_{n+1,i} &=& \left\{
\begin{array}{ll}
(V_{ni}+\sigma_n-\tau_{n+1})^+ &\mbox{ if } i=i_n,\\
(V_{ni}-\tau_{n+1})^+ &\mbox{ if } i\not=i_n.
\end{array}
\right.
\end{eqnarray*}

Let $R(\vv w)=(R_1(\vv w),\ldots,R_s(\vv w))$
be the operator on ${\mathbb R}^s$ which orders
the coordinates of $\vv w\in{\mathbb R}^s$
in the non-descending order, i.e.,
$R_1(\vv w)\le\cdots\le R_s(\vv w)$.
For $n=1$, $2$, \ldots, put $\vv W_n=R\vv V_n$.
Then $D_n=W_{n1}$ and the vectors $\{\vv W_n\}$
satisfy the 
Kiefer--Wolfowitz \cite{KW55} recursion:
\begin{equation}\label{KiW}
\vv W_{n+1} = R((W_{n1}+\sigma_n-\tau_{n+1})^+,
(W_{n2}-\tau_{n+1})^+,\ldots, (W_{ns}-\tau_{n+1})^+).
\end{equation}

In a stable system,
there exists a unique
stationary distribution for the Kiefer-Wolfowitz vectors
$\vv W_n$,  and the distribution of $\vv W_n$
converges to the stationary distribution
in the total variation norm, as $n\to\infty$.
In particular, the same holds for the $D_n$:
there exists a unique
distribution of the stationary waiting time (delay)
$D$, and the distribution of $D_n$ converges to that
of $D$ in the total variation norm.

In a single server queue ($s=1$),
the waiting times $D_n$
satisfy the Lindley recursion \cite{Lindley}:
$$
D_{n+1}=(D_n+\sigma_n-\tau_{n+1})^+.
$$
Recall that, given $D_1=0$,
$D_{n+1}$ coincides in distribution with
$\max(S_k,k\le n)$ where $S_0=0$ and
$S_n=\sum_{k=1}^n(\sigma_k-\tau_{k+1})$, for $n\ge 1$.
It is well known
(see, for example, \cite{P,Ver,APQ}) that
the tail of stationary waiting
time $D$ is related to the service time distribution tail
$\overline B(x)={\mathbb P}\{\sigma>x\}$ via the equivalence
\begin{eqnarray}\label{W.single}
{\mathbb P}\{D>x\} &\sim& \frac{\rho}{1-\rho}
\overline B_r(x) \quad\mbox{ as }x\to\infty,
\end{eqnarray}
provided the {\it subexponentiality} of the
{\it residual service time distribution} $B_r$ defined by its tail
\begin{eqnarray*}
\overline B_r (x) &:=& \frac{1}{b}
\int_x^\infty \overline B(y)\,dy,\ \ x>0
\end{eqnarray*}
is guaranteed.
Recall that a distribution $G$ on ${\mathbb R}^+$
is {\it subexponential},
 $G\in {\cal S}$,
if $\overline{G*G}(x)\sim2\overline G(x)$ as $x\to\infty$.

It is also well-known that, in a single server queue,
for any $\gamma>0$,
$D$ has a finite $\gamma$th moment, ${\mathbb E} D^{\gamma}<\infty$
if and only if
${\mathbb E}\sigma^{\gamma+1}<\infty$, see \cite{KW56}.
Equivalently, ${\mathbb E}D^\gamma<\infty$ if and only if
\begin{eqnarray*}
{\mathbb E} \sigma_{r,1}^\gamma &<& \infty
\end{eqnarray*}
where random variable $\sigma_{r,1}$ has distribution $B_r$.

Less is known about the stationary delay $D$ in
the multi-server queue. It is well understood
that the heaviness of the stationary waiting time tail
distribution
depends substantially on the load $\rho$ on the system
(see, for example, the conjecture on tail equivalence
by Whitt in \cite{W};
existence results for moments in \cite{S,SS,SV,SV2011};
asymptotic results for fluid queues fed by heavy-tailed
on-off flows in \cite{BMZ,BZ}).
More precisely, the tail distribution depends on $\rho$ via the value
of its integer part $k = [\rho]\in\{0,1,\ldots,s-1\}$.

For a $GI/GI/s$ system, a heuristic idea on a probable way for the large deviations
to occur may be described as follows.
Take $N=x\frac{k}{b-ka}$, for a very large $x$.
Let all service times
$\sigma_{n-N-s+k}$, \ldots, $\sigma_{n-N-1}$
be big enough, say $\sigma_{n-N-i}>x+Na$,
$i=1$, \ldots, $s-k$.
Then the other $k$ servers form an unstable
$GI/GI/k$ queue system, because the cumulative
drift of the corresponding workloads
approximately equals $b-ka>0$.
In time $N$ all workloads of these queues
will exceed level $x$ (again approximately).
In this way, at time $N$, all $s$ workloads
become greater than $x$ with probability
which is asymptotically not less than
$\overline B^{s-k}(x+Na)\approx
\overline B^{s-k}\bigl(x\frac{b}{b-ka}\bigr)
=\overline B^{s-k}\bigl(x\frac{\rho}{\rho-k}\bigr)$.
We use these heuristic arguments below in Section
\ref{lower.bound} to derive a lower bound.
We follow more precise calculations
to obtain a better lower bound
of order $\overline B_r^{s-k}\bigl(x\frac{\rho}{\rho-k}\bigr)$.

We recall now a few basic properties of heavy-tailed distributions and relations
between them. A distribution function $F$ is
\begin{itemize}
\item
{\it long-tailed}, $F\in {\cal L}$, if
$\overline{F}(x+1) \sim \overline{F}(x)$, as $x\to\infty$;
\item
{\it dominated varying}, $F\in {\cal D}$,
if $\overline{F}(2x)\ge c\overline{F}(x)$, for some $c>0$ and for all
$x$;
\item
{\it intermediate regularly varying}, $F\in {\cal IRV}$,
if
$$
\lim_{\varepsilon \downarrow 0} \liminf_{x\to\infty}
\overline{F}(x(1+\varepsilon ))/\overline{F}(x) =1;
$$
\item
{\it regularly varying},
$F\in {\cal RV}$, if $\overline{F}(x) = l(x)x^{-\alpha}$
for $x>0$ where $\alpha \ge 0$ is the {\it index} of regular variation
and $l(x)$ is a {\it slowly varying at infinity}
function, i.e. $l(cx)\sim l(x)$ as $x\to\infty$.
\end{itemize}
The following relations are known:
\begin{equation}\label{relations}
{\cal RV} \subset {\cal IRV} \subset {\cal L} \cap {\cal D} \subset {\cal S},
\end{equation}
see e.g. \cite{FKZ}, pp. 33 and 54.

In \cite{FK}, we treated the case $s=2$ in detail
and found the {\it exact asymptotics} for ${\mathbb P}\{D>x\}$.
We also described the {\it most probable way
for the occurrence of the large deviations}.
That means that, for the stationary waiting time to be large,
two large service times have to be large if $\rho <1$ and
$B_r$ is a subexponential distribution
(see
\cite[Theorem 1]{FK}) and one service time has to be
large if $1<\rho <2$ and if $B$ is
{\it long-tailed} and $B_r$ is
{\it intermediate
regularly varying}
(see \cite[Theorem 2]{FK}).
We also obtained a number of simple bounds. First,
Theorem 1 in \cite{FK} yields the following

\begin{theorem}\label{co.2.max}
Let $s=2$, $\rho<1$, and let the residual time
distribution $B_r$ be subexponential.
Then the tail of the stationary waiting time
satisfies the asymptotic relation, as $x\to\infty$,
\begin{eqnarray*}
{\mathbb P}\{D>x\} &\sim&
\frac{\rho^2}{2-\rho}\Bigl[(\overline B_r (x))^2
+\int_0^\infty \overline B_r(x+ya)\overline B(x+y(a-b))dy\Bigr].
\end{eqnarray*}
As a corollary, one can obtain the following
bounds for the stationary waiting time, 
as $x\to\infty$:
\begin{eqnarray*}
\Bigl(\frac{\rho^2(2+\rho)}{2(2-\rho)}+o(1)\Bigr)
\overline B_r^2 (x)
\le {\mathbb P}\{D>x\}
\le \Bigl(\frac{\rho^2}{2(1-\rho)}+o(1)\Bigr)
\overline B_r^2 (x).
\end{eqnarray*}
Another corollary is: if, in addition, the distribution
$B$ is regularly varying with index $\gamma>1$,
then, as $x\to\infty$:
\begin{eqnarray*}
{\mathbb P}\{D>x\} &\sim& c(\overline B_r(x))^2,
\end{eqnarray*}
where
\begin{eqnarray*}
c &=& \frac{\rho^2}{2-\rho}\Bigl[1+\frac{\rho}{\gamma-1}
\int_0^\infty\frac{dz}{(1+z)^{\gamma-1}(1+z(1-\rho))^\gamma}\Bigr].
\end{eqnarray*}
\end{theorem}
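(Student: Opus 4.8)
The plan is to read the exact tail equivalence off the two-server analysis of \cite{FK} and then to derive the two corollaries by elementary estimates on the residual tail $\overline B_r$. For the exact equivalence I would simply invoke \cite[Theorem~1]{FK}, which analyses the Kiefer--Wolfowitz recursion (\ref{KiW}) for $s=2$, $\rho<1$, and establishes a ``principle of two big jumps'': for the stationary delay to exceed $x$ both servers must simultaneously carry a residual workload of order $x$, and, $B_r$ being subexponential, this can occur in essentially only two asymptotically disjoint ways. Either two service times are huge at (almost) the same epoch, which by the usual subexponential convolution estimate contributes $\frac{\rho^2}{2-\rho}\overline B_r^2(x)$; or one service time is huge at an epoch $n-N$, after which the other server behaves as an overloaded single-server queue that lifts its workload above $x$ by the observation epoch, the separation $N$ being asymptotically proportional to a free parameter $y$. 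Summing the second contribution over $y$ yields the integral term, with $\overline B_r(x+ya)$ accounting for the residual-renewal effect of the first jump and $\overline B(x+y(a-b))$ for the probability that a later service time overcomes the negative drift $-(a-b)$ accumulated over the span. I would cite this rather than reprove it; re-establishing it from scratch --- the large-deviation analysis of (\ref{KiW}) and the exclusion of all other scenarios --- is the real work and would be the main obstacle.

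\textbf{The bounds.} Write $I(x)=\int_0^\infty \overline B_r(x+ya)\overline B(x+y(a-b))\,dy$, note that $0<a-b<a$ since $0<\rho<1$, and recall $\overline B(u)=-b\,\overline B_r'(u)$, so that $\int_x^\infty \overline B_r(u)\overline B(u)\,du=\frac b2\overline B_r^2(x)$. As $\overline B_r$ is non-increasing, $\overline B_r(x+ya)\le\overline B_r(x+y(a-b))$; the substitution $u=x+y(a-b)$ then gives $I(x)\le\frac1{a-b}\int_x^\infty\overline B_r(u)\overline B(u)\,du=\frac{\rho}{2(1-\rho)}\overline B_r^2(x)$, and multiplying by the prefactor $\rho^2/(2-\rho)$ and simplifying yields the stated upper bound $\frac{\rho^2}{2(1-\rho)}\overline B_r^2(x)$. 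For the lower bound, $\overline B(x+y(a-b))\ge\overline B(x+ya)$; the substitution $u=x+ya$ gives $I(x)\ge\frac1a\int_x^\infty\overline B_r(u)\overline B(u)\,du=\frac{\rho}{2}\overline B_r^2(x)$, whence the same algebra produces $\frac{\rho^2(2+\rho)}{2(2-\rho)}\overline B_r^2(x)$.

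\textbf{The regularly varying case.} If $\overline B(x)=l(x)x^{-\gamma}$ with $\gamma>1$, Karamata's theorem gives $\overline B_r(x)\sim x\overline B(x)/(b(\gamma-1))$, so $B_r\in{\cal RV}\subset{\cal S}$ (hence the main equivalence applies) and $\overline B(x)\sim b(\gamma-1)\overline B_r(x)/x$. Scaling $y=xt$ in $I(x)$ and using the uniform convergence theorem for slowly varying functions, $\overline B_r(x(1+ta))\sim(1+ta)^{1-\gamma}\overline B_r(x)$ and $\overline B(x(1+t(a-b)))\sim(1+t(a-b))^{-\gamma}\overline B(x)$; passing to the limit under the integral sign --- legitimate by Potter's bounds, the integrand being dominated by a constant multiple of $(1+t)^{1-\gamma}(1+ct)^{-\gamma}$, integrable precisely because $\gamma>1$ --- yields $I(x)\sim x\overline B_r(x)\overline B(x)\int_0^\infty(1+ta)^{1-\gamma}(1+t(a-b))^{-\gamma}\,dt$. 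Substituting $z=ta$, using $(a-b)/a=1-\rho$ together with $x\overline B_r(x)\overline B(x)\sim b(\gamma-1)\overline B_r^2(x)$, and collecting constants gives $I(x)\sim C\overline B_r^2(x)$ with $C$ an explicit Beta-type integral, so $\mathbb P\{D>x\}\sim c\overline B_r^2(x)$ with $c$ as in the statement. The only delicate point here is the interchange of limit and integral; the rest is bookkeeping.
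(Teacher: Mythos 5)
Your route is, in substance, the paper's route: the paper gives no proof of the exact equivalence either, it simply states that Theorem~1 of \cite{FK} yields it, and the two corollaries are left implicit; so invoking \cite[Theorem~1]{FK} and supplying the elementary estimates is exactly what is intended. Your derivation of the two-sided bounds is correct: monotonicity of the tails together with the identity $\int_x^\infty\overline B_r(u)\overline B(u)\,du=\tfrac b2\overline B_r^2(x)$ gives $\tfrac\rho2\overline B_r^2(x)\le I(x)\le\tfrac{\rho}{2(1-\rho)}\overline B_r^2(x)$, and multiplying by $\rho^2/(2-\rho)$ reproduces the stated constants $\tfrac{\rho^2(2+\rho)}{2(2-\rho)}$ and $\tfrac{\rho^2}{2(1-\rho)}$.

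In the regularly varying corollary your method (Karamata, the scaling $y=xt$, Potter-bound domination) is the right one, but the bookkeeping does not in fact produce the constant as printed. Since $x\overline B(x)\sim b(\gamma-1)\overline B_r(x)$, your limit reads $I(x)\sim\rho(\gamma-1)\,J\,\overline B_r^2(x)$ with $J=\int_0^\infty(1+z)^{1-\gamma}(1+z(1-\rho))^{-\gamma}dz$, i.e.\ the factor is $\rho(\gamma-1)$, not $\rho/(\gamma-1)$; the two coincide only when $\gamma=2$ (an exact computation with a Pareto tail confirms $\rho(\gamma-1)J$). Your version is the internally consistent one: since $J\le\frac{1}{2(1-\rho)(\gamma-1)}$, the printed constant $\frac{\rho}{\gamma-1}J$ would drop below $\rho/2$ as soon as $(\gamma-1)^2>1/(1-\rho)$, contradicting the lower bound asserted in the same theorem, whereas $\rho(\gamma-1)J$ always lies in $[\rho/2,\;\rho/(2(1-\rho))]$. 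So the statement's constant appears to be a misprint, and the only real slip on your side is asserting that the computation ``gives $c$ as in the statement'' without checking the match. A minor further remark: your heuristic gloss on the integral term (the second server behaving as an overloaded queue) describes the $\rho>1$ mechanism; for $\rho<1$ both factors are big-jump probabilities, $\overline B(x+y(a-b))$ being the probability of a second big jump that overcomes the drain $a-b$ per interarrival -- but since you only cite \cite{FK} for this part, it does not affect the argument.
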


For the case $\rho>1$, we also proved in \cite{FK}

\begin{theorem}\label{th.2.min.lower}
Let $s=2$, $1<\rho<2$, and let both $B$ and $B_r$ be
subexponential distributions.
Then the tail of the stationary waiting time
satisfies the following inequalities:
\begin{eqnarray*}
\limsup_{x\to\infty}\frac{{\mathbb P}\{D>x\}}
{\overline{B}_r(2x)}
&\le& \frac{\rho}{2-\rho},
\end{eqnarray*}
and, for any fixed $\delta>0$,
\begin{eqnarray*}
\liminf_{x\to\infty}\frac{{\mathbb P}\{D>x\}}
{\overline B_r\bigl(\frac{\rho+\delta}{\rho-1}x\bigr)}
&\ge& \frac{\rho}{2-\rho}.
\end{eqnarray*}
If, in particular, $B$ is subexponential and $B_r$
is
intermediate
regularly varying, 
then
\begin{eqnarray*}
{\mathbb P}\{D>x\} &\sim& \frac{\rho}{2-\rho}
\overline B_r\Bigl(\frac{\rho}{\rho-1}x\Bigr)
\quad\mbox{ as }x\to\infty.
\end{eqnarray*}
\end{theorem}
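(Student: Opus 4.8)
The plan is to prove the two displayed inequalities separately — they are, respectively, a crude and (after more work) a sharp quantitative form of a ``single big jump'' principle — and then to deduce the exact asymptotics by squeezing the two together with the help of the ${\cal IRV}$ hypothesis. Throughout I work with the stationary unordered workload vector $\vv V_0=(V_{01},V_{02})$, so that $D=\min(V_{01},V_{02})$, and I write $a={\mathbb E}\tau$, $b={\mathbb E}\sigma$, $\rho=b/a\in(1,2)$.

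\emph{The upper bound.} Here I would start from the trivial inequality $2D\le V_{01}+V_{02}=:Q_0$, the total unfinished work in the system, and estimate the tail of $Q_0$. Using the recursion~(\ref{KiW}) one checks that $Q_n$ is, up to corrections at the steps where one of the two lines empties, the workload of a single-server $GI/GI/1$ queue driven by service times $\sigma_n$ and interarrival times $2\tau_n$; this virtual queue has traffic load $\rho/2<1$ and its residual service distribution is exactly $B_r$. The emptying-step corrections are bounded in the regime that matters: if $Q_0$ is large then some service time was large, and right after any big service time the shorter line receives essentially all incoming work and, because $\rho>1$, builds up monotonically rather than emptying. Since $B_r$ is subexponential, relation~(\ref{W.single}) applied to the virtual queue gives $\mathbb P\{Q_0>y\}\le\bigl(\frac{\rho}{2-\rho}+o(1)\bigr)\overline B_r(y)$ as $y\to\infty$, and $y=2x$ yields the first inequality. (Histories with no large service time contribute only $o(\overline B_r(x))$ to $\{Q_0>2x\}$, by a standard large-deviation bound, which is what lets one restrict to the regime above.)

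\emph{The lower bound.} Here I would make rigorous the ``$s-k$ big jumps'' heuristic of Section~\ref{intro} in the case $k=1$. Fix $\delta>0$. I would bound $\mathbb P\{D>x\}$ from below by the probability that exactly one service time $\sigma_{-j}$, for some $j$ in a window around $x/(b-a)$, is large — at least $\frac{\rho+\delta}{\rho-1}\,x$ — while the neighbouring service times are moderate. On that event the customer carrying the big job keeps one line occupied for a long time, so all later customers are routed to the other line, which thus evolves as an \emph{overloaded} ($\rho>1$) single-server queue and, by the law of large numbers, passes level $x$ after about $x/(b-a)$ interarrival times (the surplus $\delta$ is precisely what makes this build-up succeed with probability tending to one — it absorbs the law-of-large-numbers fluctuations, and it cannot be removed without a regularity condition such as ${\cal IRV}$). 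Two scenarios then have to be combined — the tagged line remains the strict maximum up to time $0$, or its workload comes down to the level of the other line before time $0$, after which the two decrease together as a \emph{subcritical} ($\rho/2<1$) single server — and summing their probabilities over the admissible positions and sizes of the big jump by a renewal/integration argument turns $\overline B$ into $\overline B_r$ and produces exactly the constant $\frac{\rho}{2-\rho}$, giving the second inequality. (Passing from ``exactly one big jump'' to the non-disjoint family ``at least one big jump'' costs only a factor $1+o(1)$, since two near-simultaneous big jumps are negligible.)

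\emph{The exact asymptotics.} When $B$ is subexponential and $B_r\in{\cal IRV}$: letting $\delta\downarrow0$ in the lower bound and using that ${\cal IRV}$ forces $\overline B_r\bigl(\frac{\rho+\delta}{\rho-1}x\bigr)/\overline B_r\bigl(\frac{\rho}{\rho-1}x\bigr)\to1$ in $\liminf$ (apply the ${\cal IRV}$ definition to $B_r$ with $\varepsilon=\delta/\rho$) gives $\liminf_{x\to\infty}\mathbb P\{D>x\}/\overline B_r\bigl(\frac{\rho}{\rho-1}x\bigr)\ge\frac{\rho}{2-\rho}$. For the matching upper bound the crude estimate is useless, its argument $2x$ being \emph{smaller} than $\frac{\rho}{\rho-1}x$; instead one has to run the single-big-jump decomposition \emph{from above} — split off the largest service time in a window of length $O(x)$, use subexponentiality of $B$ to discard the ``no big jump'' and the ``at least two big jumps'' contributions, and redo the fluid-trajectory analysis to see that a single jump that forces $D>x$ must have size at least $\frac{\rho-\delta}{\rho-1}x$ — obtaining $\limsup_{x\to\infty}\mathbb P\{D>x\}/\overline B_r\bigl(\frac{\rho-\delta}{\rho-1}x\bigr)\le\frac{\rho}{2-\rho}$, after which ${\cal IRV}$ again closes the gap as $\delta\downarrow0$. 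I expect this last, sharp \emph{upper} estimate to be the main obstacle: unlike the crude bound it cannot be reduced to the scalar total workload, and one must control the full two-dimensional Kiefer--Wolfowitz dynamics together with the routing of customers. It is here that the refinement from $\overline B_r(2x)$ to $\overline B_r\bigl(\frac{\rho}{\rho-1}x\bigr)$ is genuinely costly, and where the detailed analysis of \cite{FK} is needed.
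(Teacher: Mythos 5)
This theorem is not actually proved in the present paper: it is quoted from \cite{FK} (Theorem 2 there), and the general-$s$ machinery developed here only yields weaker versions of it --- Theorem \ref{th.lower} with $s=2$, $k=1$ gives the lower bound with constant $\rho$ rather than $\frac{\rho}{2-\rho}$, and Theorems \ref{th.deter.upper}/\ref{th:subexp.bounds} give an upper bound with constant $2\cdot\frac{2\rho}{2-\rho}$ at argument $x(1-\delta)$ rather than $\frac{\rho}{2-\rho}$ at $2x$. So the relevant benchmark is the $s=2$ analysis of \cite{FK}, which the authors describe as rather involved. Your overall route does match that analysis in spirit: $2D\le W_{1}+W_{2}$ plus a pooled single-server estimate for the first inequality; one big jump plus law-of-large-numbers build-up of the other line for the second, refined by the two-phase ``the long line comes down to meet the growing one, then both drain at total rate $2a-b$'' computation --- this refinement is exactly what upgrades the constant from the $\rho$ of Theorem \ref{th.lower} to $\frac{\rho}{2-\rho}$, and your description of that mechanism is correct; and ${\cal IRV}$ to squeeze, with the correct observation that the crude bound at $2x$ cannot deliver the equivalence.

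There are, however, genuine gaps. (i) The key step of your upper bound, ${\mathbb P}\{W_{1}+W_{2}>2x\}\le\bigl(\frac{\rho}{2-\rho}+o(1)\bigr)\overline B_r(2x)$, is asserted rather than proved. The total workload of the two-server queue is \emph{not} pathwise dominated by the pooled $GI/GI/1$ workload --- the domination goes the other way, since the two-server system wastes capacity whenever one line runs dry while the other is long --- and your justification that after a big service time the shorter line ``builds up monotonically rather than emptying'' is false as stated (an overloaded queue still empties a random, finite number of times). Bounding the accumulated idleness along the excursion on which the total workload climbs to $2x$, and showing that histories without a big service time contribute only $o(\overline B_r(x))$, is precisely the delicate part; under mere subexponentiality of $B$ and $B_r$ neither is a ``standard large-deviation bound''. (ii) For the final asymptotic equivalence you concede that the required sharp upper estimate $\limsup_{x\to\infty}{\mathbb P}\{D>x\}/\overline B_r\bigl(\frac{\rho-\delta}{\rho-1}x\bigr)\le\frac{\rho}{2-\rho}$ is ``the main obstacle'' and defer it to \cite{FK}; since the stated $\overline B_r(2x)$ bound is too weak (as you correctly note), the third assertion of the theorem is simply not established by your argument. (iii) The lower bound as written is a fluid heuristic: the uniform LLN estimates, the stationarity/independence bookkeeping (Loynes-type construction as in Section \ref{lower.bound}), and the summation converting $\overline B$ into $\overline B_r$ with loss only of the $\delta$ in the argument all still need to be carried out, now over the richer two-phase event set rather than the simpler events used in Theorem \ref{th.lower}.
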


For an arbitrary $s\ge 2$ number of servers,
the best result on the existence of moments
was obtained in \cite[Theorem 4.1]{SV}
(here ${\cal L}_1^\gamma$ is a specific class of distributions
introduced in \cite{SV}):

\begin{theorem}\label{mom.SV}
Let $k<\rho<k+1$ for some $k\in\{0,1,\ldots,s-1\}$.
Then:

(i) If ${\mathbb E}\sigma^\gamma<\infty$ then
${\mathbb E}D^{(s-k)(\gamma-1)}<\infty$.

(ii) If in addition $\sigma$ is in the class ${\cal L}_1^\gamma$,
then ${\mathbb E}D^{(s-k)(\gamma-1)}<\infty$
implies ${\mathbb E}S^\gamma<\infty$.
\end{theorem}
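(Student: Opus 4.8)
The plan is to derive both parts from two-sided estimates for the stationary tail ${\mathbb P}\{D>x\}$ of the order predicted by the ``$s-k$ big jumps'' heuristic of the Introduction, and then to integrate those estimates. The reduction uses only the elementary facts that ${\mathbb E}D^{(s-k)(\gamma-1)}<\infty$ iff $\int_0^\infty x^{(s-k)(\gamma-1)-1}{\mathbb P}\{D>x\}\,dx<\infty$, and that, for $\gamma>1$,
\[
{\mathbb E}\sigma^\gamma<\infty
\ \Longleftrightarrow\ \int_0^\infty x^{\gamma-1}\overline B(x)\,dx<\infty
\ \Longleftrightarrow\ \int_0^\infty x^{\gamma-2}\overline B_r(x)\,dx<\infty
\ \Longleftrightarrow\ {\mathbb E}\sigma_{r,1}^{\gamma-1}<\infty ,
\]
the middle step being Fubini's theorem; here $S$ denotes the generic service time, so ${\mathbb E}S^\gamma={\mathbb E}\sigma^\gamma$, and the case $\gamma\le1$ is trivial.

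For part~(i) the ingredient needed is an \emph{upper} bound for ${\mathbb P}\{D>x\}$ of the order $\overline B_r^{s-k}(cx)$, valid for $x\ge x_0$ and for general $B$ — the $s\ge3$ counterpart of the upper bounds in Theorems~\ref{co.2.max} and~\ref{th.2.min.lower} (subexponentiality of $B$ sharpens such a bound to an asymptotic equivalence, but is not needed for the moment conclusion, which is anyway immediate for light-tailed $B$). Its proof is where essentially all the work goes: one runs the Kiefer--Wolfowitz recursion~\eqref{KiW} backwards from a time $n$ with $D_n>x$ — so that all $s$ coordinates of $\vv W_n$ exceed $x$ — and shows that this requires at least $s-k$ service times of order $x$ in the recent past, since with the servers of those customers effectively ``blocked'' the other $k$ run as an unstable $GI/GI/k$ queue of positive drift $b-ka$ which needs a time window of length $\asymp x$ to fill up, and that moderate service times cannot produce the excess (here one uses, according to the tail of $B$, either subexponential-type properties via~\eqref{relations} or merely the moment bound). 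Granting the bound, ${\mathbb E}\sigma^\gamma<\infty$ gives $\int_0^\infty x^{\gamma-2}\overline B_r(x)\,dx<\infty$ and so, by monotonicity of $\overline B_r$ (compare it with its integral over $[x/2,x]$), $x^{\gamma-1}\overline B_r(x)$ bounded; writing $x^{(s-k)(\gamma-1)-1}=x^{(s-k-1)(\gamma-1)}\cdot x^{\gamma-2}$,
\[
x^{(s-k)(\gamma-1)-1}\,\overline B_r^{s-k}(cx)
=\bigl(x^{\gamma-1}\overline B_r(cx)\bigr)^{s-k-1}\cdot x^{\gamma-2}\overline B_r(cx)
\le C'\,x^{\gamma-2}\overline B_r(cx),
\]
which is integrable on $[x_0,\infty)$ (substitute $u=cx$); as $x^{(s-k)(\gamma-1)-1}$ is also integrable near $0$, ${\mathbb E}D^{(s-k)(\gamma-1)}<\infty$. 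Convolution-type corrections in the upper bound (cf.\ Theorem~\ref{co.2.max}) are absorbed the same way.

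For part~(ii) I would run the complementary argument from a \emph{lower} bound ${\mathbb P}\{D>x\}\ge c\,\overline B_r^{s-k}\bigl(\tfrac{\rho}{\rho-k}x\bigr)$ for $x\ge x_0$, obtained by making the Introduction's construction precise: prescribe $s-k$ service times of size about $x+Na$ with $N\asymp x$, let the remaining $k$ servers accumulate work over the induced window, and estimate the joint probability from below. The hypothesis $\sigma\in{\cal L}_1^\gamma$ supplies the regularity of $B$ that makes this construction actually deliver a tail of this order (in particular $\overline B_r(\tfrac{\rho}{\rho-k}x)\asymp\overline B_r(x)$). Then ${\mathbb E}D^{(s-k)(\gamma-1)}<\infty$ forces $\int_0^\infty x^{(s-k)(\gamma-1)-1}\overline B_r^{s-k}(x)\,dx<\infty$; with $L(x):=x^{\gamma-1}\overline B_r(x)$ this reads $\int x^{-1}L(x)^{s-k}\,dx<\infty$, and the further regularity built into ${\cal L}_1^\gamma$ upgrades it to $\int x^{-1}L(x)\,dx=\int x^{\gamma-2}\overline B_r(x)\,dx<\infty$, that is ${\mathbb E}\sigma^\gamma={\mathbb E}S^\gamma<\infty$. (The class ${\cal L}_1^\gamma$ is in effect engineered to exclude the borderline heavy tails — e.g.\ $\overline B_r(x)\asymp x^{-(\gamma-1)}/\log x$ with $s-k\ge2$ — for which $L^{s-k}$ is integrable against $x^{-1}\,dx$ although $L$ is not.)

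The main obstacle is clearly the general upper tail bound. The $s=2$ results (Theorems~\ref{co.2.max} and~\ref{th.2.min.lower}) already show that one must control the interplay between the ``blocked'' servers and the residual unstable $GI/GI/k$ subsystem; for $s\ge3$ there is in addition the combinatorial bookkeeping of which servers carry the big jumps and in what order, together with the need to bound \emph{uniformly} the length of the build-up window, so that the ``peel off one big jump at a time'' scheme yields error terms of strictly smaller order than $\overline B_r^{s-k}$. Once the tail bounds are in hand, the passage to the moment statements is the routine real analysis indicated above.
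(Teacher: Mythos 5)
Your plan has two genuine gaps, one in each part. For part (i), everything rests on an upper bound ${\mathbb P}\{D>x\}\le C\,\overline B_r^{\,s-k}(cx)$ claimed ``for general $B$'', but this bound is never proved (the backward Kiefer--Wolfowitz sketch about ``blocked'' servers and an unstable $GI/GI/k$ subsystem is exactly the hard part, and you yourself flag it as the main obstacle) and, worse, it is false as stated outside the subexponential regime: for, say, $\overline B(x)=e^{-x^2}$ the stationary delay tail is only exponentially small (Cram\'er regime), which is of far larger order than $\overline B_r^{\,s-k}(cx)$. The dichotomy you fall back on (``immediate for light-tailed $B$'' versus subexponential) does not exhaust the possibilities, and even where the moment conclusion is easy the tail bound you posit is not available. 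What a correct general argument needs is a bound in terms of the maximum of an auxiliary negative-drift random walk rather than $\overline B_r$ itself: the paper's route is the majorant $D_n\le U_{n,(k+1)}+\eta$ built from $s$ independent single-server queues with interarrival time $(k+1)(a-h)$ (Lemma \ref{l.D.le.orderU}, Theorem \ref{th.deter.upper}), giving ${\mathbb P}\{D>2x\}\le\binom{s}{k}{\mathbb P}\{\min(M_1,\ldots,M_{s-k})>x\}+Ce^{-\beta x}$, after which the moment of the minimum is controlled by the ladder-height estimate ${\mathbb P}\{\chi>x\}\le c\,\overline B_r(x-1)$ together with the ``min of sums $\le$ sum of mins'' bookkeeping of Section \ref{proof}. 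Your integration step from a tail bound to the moment is fine, but the tail bound itself is the theorem's content and is missing.

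For part (ii), the lower-bound ingredient is sound and is precisely Theorem \ref{th.lower} (valid with no conditions on $B$, so no regularity is needed there), but the decisive step --- passing from $\int x^{-1}L(x)^{s-k}\,dx<\infty$ with $L(x)=x^{\gamma-1}\overline B_r(x)$ to $\int x^{-1}L(x)\,dx<\infty$ --- is justified only by the assertion that ${\cal L}_1^\gamma$ is ``engineered'' for this purpose. Since you never state the definition of ${\cal L}_1^\gamma$ (the present paper does not define it either; it only cites \cite{SV}), this is not an argument. Note also that the paper itself does not prove Theorem \ref{mom.SV}: it is quoted from \cite[Theorem 4.1]{SV}, and the paper's own contribution is the sharper Theorem \ref{exist.mom}, proved by the lower bound of Theorem \ref{th.lower} plus the new majorant and ladder-height argument just described; that criterion, ${\mathbb E}\bigl(\min(\sigma_{r,1},\ldots,\sigma_{r,s-k})\bigr)^\gamma<\infty$, is the clean intermediate statement your outline is implicitly reaching for, and targeting it directly would remove the need to manipulate the undefined class ${\cal L}_1^\gamma$ at all.
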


In the present paper we introduce a condition which
is both necessary and sufficient for the finiteness
of ${\mathbb E}D^\gamma$. We present this condition in
``probabilistic terms''.

\begin{theorem}\label{exist.mom}
Let $\sigma_{r,1}$,
$\sigma_{r,2}$, \ldots\ be independent random variables
with common distribution $B_r$.
Let $k<\rho<k+1$ for some $k\in\{0,1,\ldots,s-1\}$.
For any $\gamma>0$, ${\mathbb E}D^\gamma$
is finite if and only if
\begin{equation}\label{min.fin}
{\mathbb E}(\min(\sigma_{r,1},\ldots,\sigma_{r,s-k}))^\gamma
<\infty,
\end{equation}
\end{theorem}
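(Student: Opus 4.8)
The approach is to read off Theorem~\ref{exist.mom} by integration from the upper and lower bounds for \({\mathbb P}\{D>x\}\) that are the main results of the paper and that hold for an \emph{arbitrary} service-time distribution (the (intermediate) regular-variation hypotheses are needed only to make those bounds match up to a constant). Step~1 is a reduction: using \({\mathbb E}Y^\gamma=\gamma\int_0^\infty x^{\gamma-1}{\mathbb P}\{Y>x\}\,dx\) for nonnegative \(Y\), together with \({\mathbb P}\{\min(\sigma_{r,1},\ldots,\sigma_{r,s-k})>x\}=\overline B_r^{s-k}(x)\),
\begin{eqnarray*}
{\mathbb E}D^\gamma=\gamma\int_0^\infty x^{\gamma-1}{\mathbb P}\{D>x\}\,dx,\qquad
{\mathbb E}\bigl(\min(\sigma_{r,1},\ldots,\sigma_{r,s-k})\bigr)^\gamma=\gamma\int_0^\infty x^{\gamma-1}\overline B_r^{s-k}(x)\,dx,
\end{eqnarray*}
so the theorem becomes a statement about the finiteness of two tail integrals. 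The one elementary fact that makes the criterion robust is the scaling identity \(\int_0^\infty x^{\gamma-1}\overline B_r^{s-k}(cx)\,dx=c^{-\gamma}\int_0^\infty x^{\gamma-1}\overline B_r^{s-k}(x)\,dx\), valid for any \(c>0\): the finiteness of~(\ref{min.fin}) is unaffected by a linear change of the argument of \(\overline B_r\), so the exact constants appearing in the tail bounds (such as \(\rho/(\rho-k)\)) are immaterial for the moment problem, only their positivity and finiteness are.

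Step~2 (sufficiency): assume~(\ref{min.fin}) and apply the upper bound. In the regime \(k<\rho<k+1\), and in analogy with the \(s=2\) results of Theorems~\ref{co.2.max} and~\ref{th.2.min.lower}, this bound is a constant times \(\overline B_r^{s-k}(c_1x)\), or more generally a finite sum of terms each being an iterated integral of a product of \(s-k\) factors of type \(\overline B_r(\,\cdot\,)\) or \(\overline B(\,\cdot\,)\) (as in the exact \(s=2\) asymptotics of Theorem~\ref{co.2.max}), every factor \(\overline B\) being accompanied by an additional integration. One checks that the \(\gamma\)-th moment integral of every such term is finite: for the leading term this is the scaling identity; for a generic term one uses the monotonicity of \(\overline B\) and \(\overline B_r\) and the relation \(\overline B(y)\,dy=-b\,d\overline B_r(y)\) to trade, via Fubini's theorem and integration by parts, each factor \(\overline B\) together with its companion integration for a factor \(\overline B_r\), reducing the whole moment integral to a constant multiple of \(\int_0^\infty x^{\gamma-1}\overline B_r^{s-k}(cx)\,dx<\infty\). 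Hence \({\mathbb E}D^\gamma<\infty\). (This step genuinely uses the tail bound: for \(s-k\ge2\) and \(\gamma\) not too small there are service-time distributions satisfying~(\ref{min.fin}) for which \({\mathbb E}\sigma^{1+\gamma/(s-k)}=\infty\), so Theorem~\ref{mom.SV}(i) does not cover them.)

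Step~3 (necessity): by contraposition. If~(\ref{min.fin}) fails, then by Step~1 \(\int_0^\infty x^{\gamma-1}\overline B_r^{s-k}(\kappa x)\,dx=\infty\) with \(\kappa:=\rho/(\rho-k)\). The refined ``\(s-k\) big jumps'' construction of Section~\ref{lower.bound} gives constants \(c_0>0\) and \(x_0\) with \({\mathbb P}\{D>x\}\ge c_0\,\overline B_r^{s-k}(\kappa x)\) for all \(x\ge x_0\): keep \(s-k\) servers blocked by large service times while the remaining \(k\) servers run as an overloaded \(GI/GI/k\) subsystem whose workloads all exceed \(x\) within a time window of length of order \(x\), the passage from \(\overline B\) to \(\overline B_r\) in each of the \(s-k\) factors coming from integrating over the random excess by which the corresponding blocking service time has to be large. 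Then
\begin{eqnarray*}
{\mathbb E}D^\gamma\ \ge\ \gamma\int_{x_0}^\infty x^{\gamma-1}{\mathbb P}\{D>x\}\,dx\ \ge\ c_0\gamma\int_{x_0}^\infty x^{\gamma-1}\overline B_r^{s-k}(\kappa x)\,dx\ =\ \infty,
\end{eqnarray*}
so \({\mathbb E}D^\gamma=\infty\). Combining Steps~2 and~3 proves the theorem.

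The reduction is short; the substance is in its two inputs. I expect the \emph{main obstacle} to be the general upper bound for \({\mathbb P}\{D>x\}\): establishing it for an arbitrary service-time distribution and, within it, organizing the cross terms so that each stays integrable precisely in the sense of~(\ref{min.fin}). The companion difficulty on the lower-bound side is the ``more precise calculation'' that upgrades the crude \(\overline B^{s-k}(c'x)\) of the naive big-jump heuristic to the sharp \(\overline B_r^{s-k}(\kappa x)\).
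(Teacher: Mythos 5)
Your necessity step matches the paper: Theorem \ref{th.lower} holds with no assumption on $B$, and integrating $x^{\gamma-1}{\mathbb P}\{D>x\}\ge c_0x^{\gamma-1}\overline B_r^{s-k}(\kappa x)$ gives exactly the paper's argument. The gap is in your sufficiency step. You invoke an upper bound of the form ${\mathbb P}\{D>x\}\le C\,\overline B_r^{s-k}(c_1x)$ (or a finite sum of iterated integrals of $\overline B_r$/$\overline B$ factors) ``for an arbitrary service-time distribution'', but no such bound is available: the tail bound of this shape in Theorem \ref{th:subexp.bounds} is derived from Theorem \ref{th.deter.upper} only by using the subexponentiality of $B_r$ to convert $\overline F(x)$ (the tail of the maximum $M$ of a random walk with increments $\sigma_j-(k+1)(a-h)$) into $c\,\overline B_r(x)$, while Theorem \ref{exist.mom} assumes nothing beyond $k<\rho<k+1$. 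The ``iterated integral'' formulas you appeal to are the exact $s=2$ asymptotics, which again require subexponential hypotheses and have no analogue here for general $s$. You yourself flag this as ``the main obstacle'', but the proposal does not resolve it, so the if-direction is not proved.

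The paper closes this gap differently, and this is the substantive content of its proof: it stays with the distribution-free majorant of Theorem \ref{th.deter.upper} (via Lemma \ref{l.D.le.orderU}), i.e.\ ${\mathbb P}\{D>2x\}\le\binom{s}{k}{\mathbb P}\{\min(M_1,\ldots,M_{s-k})>x\}+\mathrm{const}\cdot e^{-\beta x}$ with $M_1,\ldots,M_{s-k}$ i.i.d.\ copies of $M$, and then proves ${\mathbb E}\bigl(\min(M_1,\ldots,M_{s-k})\bigr)^\gamma<\infty$ directly from (\ref{min.fin}) at the level of moments rather than tails: each $M_i$ is written as a geometric compound of ladder heights $\chi_{i,j}$, the minimum of the compound sums is bounded by the multiple sum of $\min(\chi_{1,j_1},\ldots,\chi_{s-k,j_{s-k}})$, the geometric counters have exponential moments (handled with $(x_1+\cdots+x_N)^\gamma\le N^\gamma(x_1^\gamma+\cdots+x_N^\gamma)$), and the renewal-theoretic estimate ${\mathbb P}\{\chi>x\}\le c\,\overline B_r(x-1)$ reduces everything to (\ref{min.fin}). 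This ladder-height argument is precisely what replaces the nonexistent pointwise bound ${\mathbb P}\{D>x\}\le C\overline B_r^{s-k}(c_1x)$; without it (or some substitute), your Step~2 does not go through.
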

see Section \ref{proof} for the proof.
Actually, this result (which is sharper than Theorem \ref{mom.SV})
may be deduced from the results of \cite{SV},
but was not stated there. The corresponding proof
in \cite{SV} involves a comparison with the so-called
semi-cyclic service discipline. 
To the best of our knowledge, the latter approach does not allow
one to obtain upper bounds for the tail distribution
of $D$.

The main aim of the present paper is to introduce
a novel approach for constructing upper bounds for
the stationary waiting time in multi-server queues
(see Section \ref{sec.upper.bound} below).
This allows us to derive estimates for the tail probabilities of the distribution of
the stationary waiting time if the common distribution of service times is of
supexponential type, and, further, to establish the {\it principle of big jumps}
in a particular case of intermediate varying distributions.
Also, based on the new approach, we will obtain
a direct proof of Theorem
\ref{exist.mom} (see Section \ref{proof}).

The most explicit bounds are obtained for the case
$\rho<1$. 

\begin{theorem}\label{co.s.max}
Let $\rho=b/a<1$ and let the residual time
distribution $B_r$ be subexponential.
Then the tail distribution of the stationary
waiting time admits the following bounds:
\begin{eqnarray*}
\frac{\rho^s}{s!} \le
\liminf_{x\to\infty}
\frac{{\mathbb P}\{D>x\}}{\overline B_r^s(x)}
&\le& \limsup_{x\to\infty}
\frac{{\mathbb P}\{D>x\}}{\overline B_r^s (x)}
\le \Bigl(\frac{\rho}{1-\rho}\Bigr)^s.
\end{eqnarray*}
\end{theorem}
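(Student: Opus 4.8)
\section*{Proof proposal for Theorem \ref{co.s.max}}

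The plan is to attack the two bounds separately, since they rely on genuinely different mechanisms, and to use the Kiefer--Wolfowitz recursion \eqref{KiW} together with the subexponentiality of $B_r$ throughout.

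\textbf{Lower bound.} For the lower bound $\rho^s/s!$ I would follow the heuristic sketched in the introduction, specialized to $k=0$ (since $\rho<1$). The ``$s-k$ big jumps'' principle here becomes an ``$s$ big jumps'' principle: the stationary delay exceeds $x$ if, roughly, $s$ consecutive service times (one feeding each server) are each of order $x$. More precisely, I would construct an explicit event on which $D_n>x$ in stationarity: pick a long but $x$-independent window, require that within a bounded block of $s$ arrivals the $j$-th arriving customer sees an empty-enough configuration and brings a service time exceeding (slightly more than) $x$, so that after these $s$ customers have been routed to the $s$ distinct lines all $s$ residual workloads exceed $x$; the subsequent $O(1)$ interarrival times only erode the workloads by a bounded amount. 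Because the $s$ service times are independent, the probability of this event is asymptotically $c\,\overline B^s(x)$ for an explicit constant; passing from $\overline B^s$ to $\overline B_r^s$ and optimizing the combinatorial placement of which customer hits which line yields the factor $\rho^s/s!$ (the $s!$ reflecting the number of orderings, the $\rho^s$ coming from the same load bookkeeping as in the single-server relation \eqref{W.single} applied coordinate-wise). One must also invoke long-tailedness (implied by $B_r\in\mathcal S$ via \eqref{relations}? — no, $\mathcal S\subset\mathcal L$ does hold) so that the bounded shifts are asymptotically negligible.

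\textbf{Upper bound.} The upper bound $(\rho/(1-\rho))^s$ is where the novel argument of Section \ref{sec.upper.bound} enters, and this is the main obstacle. The idea I would pursue is an inductive/coupling comparison reducing the $s$-server delay to a product of $s$ single-server-type tails. Write the event $\{D>x\}=\{W_{n1}>x\}$, meaning \emph{all} $s$ coordinates of $\vv W_n$ exceed $x$. I would dominate $\vv W_n$ (after ordering) by a system in which the smallest coordinate can only become large through a mechanism analogous to the single-server overflow, and then peel off one coordinate at a time: conditionally on the largest $s-1$ coordinates being large, the smallest one behaves like the waiting time in an auxiliary single-server queue with load $\rho$, contributing a factor $\overline B_r(x)\cdot\rho/(1-\rho)$ via \eqref{W.single}; iterating $s$ times gives $(\rho/(1-\rho))^s\overline B_r^s(x)$ up to $o(1)$. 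Making this rigorous requires (a) a clean stochastic-domination statement for the Kiefer--Wolfowitz vector that isolates the contribution of a single big service time per coordinate, and (b) a subexponential ``one-big-jump'' estimate controlling the probability that \emph{two or more} big service times feed the \emph{same} server or that workloads accumulate without a big jump — both of these should follow from $B_r\in\mathcal S$ and a union bound, but the combinatorics of assigning big jumps to servers under the min-workload routing rule is delicate and is the step I expect to consume most of the work.

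Finally, I would assemble the two bounds, note that both constants collapse to the known single-server value $\rho/(1-\rho)$ when $s=1$ and are consistent with Theorem \ref{co.2.max} when $s=2$ (there $\rho^2/2\le\liminf$ and $\limsup\le\rho^2/(1-\rho)^2$... — I should double-check the $s=2$ constants match, since Theorem \ref{co.2.max} gives the sharper $\rho^2(2+\rho)/(2(2-\rho))$ and $\rho^2/(2(1-\rho))$), and remark that the gap between $\rho^s/s!$ and $(\rho/(1-\rho))^s$ is unavoidable at this level of generality, being closed only under the extra regular-variation hypothesis as in the $s=2$ case.
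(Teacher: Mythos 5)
Your plan has the right flavor (big jumps below, a product of single-server tails above), but both halves have genuine gaps. For the lower bound, the construction you describe — a bounded, $x$-independent block of $s$ arrivals each bringing a service time slightly exceeding $x$ — produces an event of probability of order $\overline B^s(x)$, and your phrase ``passing from $\overline B^s$ to $\overline B_r^s$'' is asserted without any mechanism; that passage is the whole content of the bound, since $\overline B(x)$ can be of much smaller order than $\overline B_r(x)$. What is actually needed (and what the paper does in Lemma \ref{l.lower.k=0}) is to let the $s$ big service times occur at arbitrary epochs $i_1<\cdots<i_s$ in the unbounded past of a comparison system with constant interarrival times $a'>a$ (Corollary \ref{cor:lower.bound.D}), requiring $\sigma_{i_j}>x+(n-i_j)a'$, and then to sum over all such configurations: the elementary combinatorial Lemma \ref{q.k} bounds the ordered sum below by $\frac{1}{s!}\bigl(\sum_{j\ge s}\overline B(x+ja')\bigr)^s$, and it is the inner sum over the jump epoch that generates one factor $\rho\,\overline B_r(x)$ per jump. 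So the $1/s!$ comes from the ordering of the $s$ jump epochs spread over the whole past, not from ``the number of orderings'' within a bounded block, and no subexponentiality is needed for this half beyond $B_r\in{\cal L}$ to absorb the shifts.

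For the upper bound, the peeling scheme — ``conditionally on the largest $s-1$ coordinates being large, the smallest one behaves like a single-server queue with load $\rho$'' — is not justified and, as stated, is not a usable step: conditioning on other Kiefer--Wolfowitz coordinates being large changes the routing and the law of the smallest coordinate, and you yourself flag that the domination statement (a) and the assignment-of-big-jumps combinatorics (b) are open; so the proposal does not contain a proof of the bound $(\rho/(1-\rho))^s$. The paper's device is different and much simpler: for deterministic interarrival times one builds $s$ \emph{mutually independent} auxiliary $D/GI/1$ queues, each fed by \emph{every} arrival at times $na$ and carrying its own i.i.d.\ service times $\sigma_{ni}$, and couples the original system by declaring the true service time of customer $n$ to be $\sigma_{n,i_n}$, where $i_n$ is the line it joins. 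Monotonicity then gives $D_n\le\min_{i\le s}U_{ni}$ pathwise (each line of the true system serves only a sub-stream of what its auxiliary queue serves), independence of the coordinates gives ${\mathbb P}\{D>x\}\le{\mathbb P}^s\{U_1>x\}$, and (\ref{W.single}) yields the constant $\bigl(\rho/(1-\rho)\bigr)^s$; general $GI$ arrivals are then handled by the comparison results of Section \ref{comparison} (Lemma 1 of \cite{FK}). The independent-majorant construction is precisely what removes the need to track which big service time feeds which server, which is where your sketch stalls.
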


We present here the lower and upper bounds only.
As it was described in \cite{FK},
the only case where the tail asymptotics are available
with an explicit constant multiplier is the case
of regularly varying service time distribution.
The corresponding calculations
are rather involved
and deal with the law of large numbers
and a summation over a specific $s$-dimensional domain
with planar boundaries. These calculations have been
carried out
in \cite{FK} in the case of
$s=2$ servers.

The proof of Theorem \ref{co.s.max} (see Section
\ref{k=0}) is based on a simple argument which
cannot be applied if $\rho>1$. For an arbitrary $\rho$,
we have the following result.

\begin{theorem}\label{th:subexp.bounds}
Let $k\in\{0,\ldots,s-1\}$ and $\delta>0$.
If $\rho>k$, then
\begin{eqnarray*}
{\mathbb P}\{D>x\}
&\ge& \frac{\rho^{s-k}+o(1)}{(s-k)!}
\overline{B_r}^{s-k}\Bigl(\frac{\rho+\delta}{\rho-k}x\Bigr)\ \mbox{ as }x\to\infty.
\end{eqnarray*}
If $\rho<k+1$  and if the residual service time distribution
$B_r$ is subexponential, then
\begin{eqnarray*}
{\mathbb P}\{D>x\}
&\le& \Bigl(\begin{array}{c}s\\k\end{array}\Bigr)
\Bigl(\frac{(k+1)\rho}{k+1-\rho}+o(1)\Bigr)^{s-k}
\overline B_r^{s-k}(x(1-\delta))\ \mbox{ as }x\to\infty.
\end{eqnarray*}
\end{theorem}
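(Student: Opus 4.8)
The plan is to treat the lower bound and the upper bound separately, but in both cases the guiding principle is the same ``$s-k$ big jumps'' heuristic described in the introduction. For the \emph{lower bound}, I would formalize the heuristic construction: fix a large $x$ and a small $\varepsilon>0$, set $N\approx x\frac{k}{b-ka}$, and look at the event that $s-k$ of the service times among $\sigma_{n-N-s+k},\ldots,\sigma_{n-N-1}$ exceed $\frac{\rho+\delta}{\rho-k}x$ (say, up to the $\varepsilon$-correction), while the remaining $k$ queues behave like a typical trajectory of an overloaded $GI/GI/k$ system with drift $b-ka>0$. On that event, a law-of-large-numbers argument shows that by time $n$ all $s$ workloads exceed $x$, so $D_n>x$. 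Since the $s-k$ large service times can be chosen in $\binom{s-1}{s-k}$ (or after symmetrization, essentially $\frac{1}{(s-k)!}\rho^{s-k}$ worth of) ways and they are asymptotically independent, the probability of this event is at least $\frac{\rho^{s-k}+o(1)}{(s-k)!}\,\overline B^{\,s-k}\bigl(\frac{\rho+\delta}{\rho-k}x\bigr)$; replacing $\overline B$ by the heavier $\overline B_r$ only helps, and absorbing the $\delta$-slack lets us pass from $\overline B$ to $\overline{B_r}$ at the cost of adjusting $\delta$. The bookkeeping of the drift constant (why $\frac{b}{b-ka}=\frac{\rho}{\rho-k}$ is the right scaling) and the LLN estimate for the $k$ ``working'' queues is the technical heart here, but it parallels the $s=2$ computation in \cite{FK}.

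For the \emph{upper bound}, I would use the novel construction promised in Section~\ref{sec.upper.bound}: bound the stationary Kiefer--Wolfowitz vector from above by a dominating process in which one isolates, at the last epoch before the overload, which $k$ servers are ``ahead.'' Concretely, $\{D>x\}$ forces $W_{s1}>x$ for the ordered stationary vector, and a union bound over the $\binom{s}{k}$ choices of the $k$ most-loaded servers reduces matters to estimating, for a fixed such choice, the probability that the $s-k$ least-loaded servers each exceed $x$. Each of those $s-k$ coordinates is dominated by a single-server-type workload that is driven upward only through its own sequence of large service times, and using subexponentiality of $B_r$ together with the single-server tail formula \eqref{W.single} (applied with the inflated load $\frac{(k+1)\rho}{k+1-\rho}$, which is the effective drift when $k$ servers are removed and $\rho<k+1$) gives each such coordinate a tail of order $\frac{(k+1)\rho}{k+1-\rho}\overline B_r(x)$, up to $(1-\delta)$-type corrections. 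Multiplying the $s-k$ asymptotically independent contributions and summing over the $\binom{s}{k}$ choices yields the stated bound $\binom{s}{k}\bigl(\frac{(k+1)\rho}{k+1-\rho}+o(1)\bigr)^{s-k}\overline B_r^{\,s-k}(x(1-\delta))$.

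The main obstacle, I expect, is the upper bound, and specifically two points within it. First, one must justify the domination of each ``slow'' coordinate by a single-server queue with load strictly less than one: when $\rho<k+1$, removing $k$ of the $s$ servers should leave an effective per-server load below $1$, but making this rigorous requires a careful pathwise comparison (via the recursion \eqref{KiW}) showing that the other $k$ servers can only \emph{drain} work away from the slow ones, never add to them, uniformly in the stationary regime. Second, one needs the $s-k$ slow coordinates to be \emph{asymptotically independent} so that their joint tail factorizes --- this is where subexponentiality (more precisely, the ``single big jump'' structure) is used: conditionally on one coordinate being large because of one big service time, the others are still only large via their own independent big service times, and the $\delta$-slack in $x(1-\delta)$ is exactly the room needed to decouple the overlapping arrival streams. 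Handling the $o(1)$ uniformly over the finitely many choices of the $k$ ahead-servers, and checking that the long-tailedness implicit in $\mathcal{S}$ lets us replace various shifted arguments $x\pm O(1)$ by $x(1\mp\delta)$, are the remaining routine-but-delicate pieces. The lower bound, by contrast, should follow fairly directly once the LLN estimate for an overloaded $GI/GI/k$ subsystem is in place.
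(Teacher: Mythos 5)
Your lower-bound construction has a genuine gap. Placing the $s-k$ big service times in a fixed window just before time $n-N$ only produces a bound of order $\overline B^{\,s-k}\bigl(\frac{\rho+\delta}{\rho-k}x\bigr)$, and your claimed upgrade to $\overline{B_r}^{\,s-k}$ ``by adjusting $\delta$'' goes the wrong way: a lower bound in terms of $\overline B$ is strictly weaker than one in terms of $\overline B_r$, since for heavy tails $\overline B(cx)=o(\overline B_r(x))$ for every fixed $c>0$ (in the regularly varying case $\overline B_r(x)\asymp x\overline B(x)$), so no rescaling of the argument converts the former into the latter. The paper (Theorem \ref{th.lower}) obtains $\overline{B_r}^{\,s-k}$ precisely by \emph{not} fixing the epochs of the jumps: it sums the probabilities of the disjoint events $A_n(\vv i)$ over all admissible locations $i_1<\cdots<i_{s-k}<n-N$, and this summation over epochs is what converts products of $\overline B(y+ja)$ into integrated tails $\rho\,\overline B_r$ (via Lemma \ref{q.k}); it is also the source of the constant $\rho^{s-k}/(s-k)!$, not your count of which servers receive the jumps. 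The LLN analysis of the overloaded $GI/GI/k$ subsystem is as you describe, but without the summation-over-epochs step the stated bound does not follow.

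The upper bound has a second, more serious gap: the step you flag as the main obstacle cannot be repaired in the form you propose. When $\rho\ge1$ an individual coordinate $V_{ni}$ is \emph{not} dominated by a stable single-server queue driven by ``its own'' service times (the paper notes explicitly that $V_{n1}\le U_{n1}$ may fail), and the $s-k$ least-loaded coordinates of the true system are not independent in any usable sense, so the factorization you invoke is unjustified. The paper's resolution is structurally different: for every fixed set $I$ of $k+1$ servers it dominates the \emph{sum} $\sum_{i\in I}V_{ni}$ by the sum of $k+1$ mutually independent auxiliary $D/GI/1$ queues whose interarrival times are stretched to $(k+1)(a-h)$ with $h$ as in (\ref{choice.h}), up to an additive error $\eta_I$ with finite exponential moment (Lemma \ref{l.sum.of.W.le.U}, which needs the drift estimates of Section \ref{lln}); since $D_n$ is at most the average of any $k+1$ coordinates, this gives $D_n\le U_{n,(k+1)}+\eta$ (Lemma \ref{l.D.le.orderU}), and the tail of the $(k+1)$th order statistic of the i.i.d.\ $U_i$ produces both the combinatorial factor and the product of $s-k$ single-server tails (Theorem \ref{th.deter.upper}). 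The constant $\frac{(k+1)\rho}{k+1-\rho}$ then arises from applying (\ref{W.single}) to these auxiliary queues and letting $h$ tend to its minimal admissible value; it is not an ``inflated load,'' and the $(1-\delta)$ slack is used to absorb the light-tailed $\eta$, not to decouple arrival streams. Without this (or an equivalent) majorant construction your outline does not yield the claimed upper bound.
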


The lower bound follows from Theorem \ref{th.lower}
in Section \ref{lower.bound}. 
The proof of the upper bound
may be found in Section \ref{sec.upper.bound}.
It is based on
results from
Section \ref{majorant},
where we present a novel construction of a
consistent majorant for $D_n$.

Note that the lower and the upper bounds in
Theorem \ref{th:subexp.bounds} are not necessarily
of the same order. In particular, if distribution
$B$ is of Weibull type then the ratio of the upper
and  the lower bounds tends to infinity, as $x$ increases.
In this case we do not have any ideas about how
correct/exact/sharp bounds would look like.
But if, in particular, the residual
 service time distribution belongs to the class ${\cal D} \cap
{\cal L}$,   
then
these bounds differ by a multiplicative constant only.

Note that in Theorem \ref{th:subexp.bounds}
we require conditions on the residual distribution $B_r$
and not on the distribion $B$ itself. This is in line
with the key results on subexponentiality like (\ref{W.single}).

\begin{corollary}\label{regularly.case}
Let the residual service time distribution
$B_r$ be long-tailed and dominated varying.
Let $k<\rho<k+1$ for some $k\in\{0,1,\ldots,s-1\}$.
Then there exist constants $c_1$ and $c_2$
such that, for all $x$,
\begin{equation}\label{rvst}
c_1\overline B_r^{s-k}(x) \le
{\mathbb P}\{D>x\} \le c_2\overline B_r^{s-k}(x).
\end{equation}
\end{corollary}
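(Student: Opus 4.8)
The plan is to derive Corollary \ref{regularly.case} as an immediate consequence of Theorem \ref{th:subexp.bounds} together with the inclusion $\mathcal{D}\cap\mathcal{L}\subset\mathcal{S}$ from (\ref{relations}). First I would record that if $B_r\in\mathcal{D}\cap\mathcal{L}$, then in particular $B_r$ is subexponential, so both the lower and the upper bound of Theorem \ref{th:subexp.bounds} are available for any fixed $\delta>0$. The upper bound gives ${\mathbb P}\{D>x\}\le\binom{s}{k}\bigl(\frac{(k+1)\rho}{k+1-\rho}+o(1)\bigr)^{s-k}\overline{B_r}^{s-k}(x(1-\delta))$ and the lower bound gives ${\mathbb P}\{D>x\}\ge\frac{\rho^{s-k}+o(1)}{(s-k)!}\overline{B_r}^{s-k}\bigl(\frac{\rho+\delta}{\rho-k}x\bigr)$.

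The point where the hypothesis $B_r\in\mathcal{D}$ (dominated variation) enters is in replacing the shifted/scaled arguments $x(1-\delta)$ and $\frac{\rho+\delta}{\rho-k}x$ by $x$ itself, up to a multiplicative constant. For dominated variation one has the standard fact that for every fixed $t>0$ there is a constant $c(t)>0$ with $\overline{B_r}(tx)\ge c(t)\,\overline{B_r}(x)$ for all $x$; iterating the defining inequality $\overline{B_r}(2x)\ge c\,\overline{B_r}(x)$ (and using monotonicity for the reverse direction) gives a two-sided comparison $c(t)\,\overline{B_r}(x)\le\overline{B_r}(tx)\le C(t)\,\overline{B_r}(x)$ for all $x$ and all fixed $t>0$. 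Applying this with $t=\frac{\rho}{\rho-k}$ in the lower bound (choosing $\delta$ small, or simply absorbing $\delta$ into the constant by monotonicity since $\frac{\rho+\delta}{\rho-k}\ge\frac{\rho}{\rho-k}$) and with $t=1-\delta$ in the upper bound converts both bounds into the form $c\,\overline{B_r}^{s-k}(x)$ after raising to the power $s-k$.

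The last step is simply to fold the $o(1)$ terms into the constants: for $x$ large enough the prefactors $\frac{\rho^{s-k}+o(1)}{(s-k)!}$ and $\binom{s}{k}\bigl(\frac{(k+1)\rho}{k+1-\rho}+o(1)\bigr)^{s-k}$ are bounded away from $0$ and $\infty$ respectively, and for the finitely many remaining small values of $x$ one adjusts the constants using the trivial bound $0\le{\mathbb P}\{D>x\}\le1$ together with $\overline{B_r}(x)>0$ for all $x$ (which holds since $\sigma$, hence $B_r$, has unbounded support). This yields the desired $c_1\overline{B_r}^{s-k}(x)\le{\mathbb P}\{D>x\}\le c_2\overline{B_r}^{s-k}(x)$ valid for all $x$. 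I do not anticipate a genuine obstacle here; the only thing requiring a little care is the elementary dominated-variation comparison lemma for arbitrary scaling factors $t$, but this is entirely standard (see e.g. \cite{FKZ}) and not specific to the queueing context.
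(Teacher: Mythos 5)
Your proposal is correct and follows exactly the route the paper intends: it invokes the inclusion ${\cal L}\cap{\cal D}\subset{\cal S}$ from (\ref{relations}) to make Theorem \ref{th:subexp.bounds} applicable, and then uses the defining property of dominated variation to absorb the factors $\frac{\rho+\delta}{\rho-k}$ and $1-\delta$ in the arguments into multiplicative constants, with the remaining $o(1)$ terms and small $x$ handled by adjusting $c_1,c_2$. This is precisely the paper's one-line justification, spelled out in detail.
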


The result follows directly from Theorem \ref{th:subexp.bounds},
the last inclusion in (\ref{relations})
and the definition of the dominated variation.


In the particular case of distributions of intermediate variation,
we will use \cite[Theorem 7]{BaF} to complement
 Corollary \ref{regularly.case} by establishing
the ``principle of $s-k$ big jumps'':
the main cause of the value of $D$ to be big is to have
$s-k$ big service times, see Section \ref{sec.upper.bound} for
the precise statement.

\section{Comparison of systems with different
inter-arrival times.}\label{comparison}

Here we present results which, in particular,
allow us to obtain lower and upper bounds for the stationary
delay in a
general $GI/GI/s$ system in terms of a simpler $D/GI/s$
system with deterministic
interarrival times. We use the following partial ordering:
for two vectors $\vv x = (x_1,\ldots ,x_s)$ and
$\vv y = (y_1,\ldots ,y_s)$, we write
$\vv x \le \vv y$ if $x_j\le y_j$
for all $j=1$, \ldots, $s$.

Consider two $GI/GI/s$ systems,
say $\widetilde{\vv V}$ and $\widehat{\vv V}$,
with service times $\sigma_n$
and with interarrival times
$\widetilde\tau_n$ and $\widehat\tau_n$ respectively.
Let $\widetilde D_n$ and $\widehat D_n$
be the corresponding waiting times in these systems. Let
$\xi_n=\widehat\tau_{n+1}-\widetilde\tau_{n+1}$.
We obtain an upper bound for delay $\widetilde D_n$
in terms of delay $\widehat D_n$
and the sequence $\xi_n$.

\begin{lemma}\label{lem:upper.bound}
For all $n\ge1$,
$\widetilde D_n \le \widehat D_n+M_{n-1}$,
where $M_0=0$ and $M_n=(M_{n-1}+\xi_n)^+$.
\end{lemma}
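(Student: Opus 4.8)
The plan is to prove Lemma~\ref{lem:upper.bound} by induction on $n$, coupling the two systems on the same probability space so that they share the common service-time sequence $\{\sigma_n\}$ and differ only through their interarrival times. For $n=1$ we have $\widetilde D_1=\widehat D_1=0$ (both systems start empty), and $M_0=0$, so the inequality holds trivially. Working with the Kiefer--Wolfowitz vectors $\widetilde{\vv W}_n=R\widetilde{\vv V}_n$ and $\widehat{\vv W}_n=R\widehat{\vv V}_n$, it will be convenient to prove the stronger, vector-valued statement
\begin{equation*}
\widetilde{\vv W}_n \le \widehat{\vv W}_n + M_{n-1}\vv 1,
\end{equation*}
where $\vv 1=(1,\ldots,1)$; since $\widetilde D_n=\widetilde W_{n1}$ and $\widehat D_n=\widehat W_{n1}$, taking the first coordinate then yields the assertion of the lemma.

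For the inductive step I would feed the induction hypothesis $\widetilde{\vv W}_n\le\widehat{\vv W}_n+M_{n-1}\vv 1$ through the recursion~(\ref{KiW}). Write $\widehat\tau_{n+1}=\widetilde\tau_{n+1}+\xi_n$. Inside the $R(\cdot)$ in the recursion for $\widetilde{\vv W}_{n+1}$, each coordinate has the form $(\widetilde W_{nj}+\sigma_n\mathbf 1_{\{j=1\}}-\widetilde\tau_{n+1})^+$. Using the induction hypothesis coordinatewise and monotonicity of $t\mapsto(t)^+$, this is at most $(\widehat W_{nj}+\sigma_n\mathbf 1_{\{j=1\}}+M_{n-1}-\widetilde\tau_{n+1})^+$. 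Now I exploit two elementary facts: first, $(u+v)^+\le(u)^++v$ for $v\ge0$, applied with the nonnegative increment $M_{n-1}$, and second, $\widetilde\tau_{n+1}=\widehat\tau_{n+1}-\xi_n$ so that $M_{n-1}-\widetilde\tau_{n+1}=M_{n-1}+\xi_n-\widehat\tau_{n+1}\le (M_{n-1}+\xi_n)^+-\widehat\tau_{n+1}=M_n-\widehat\tau_{n+1}$. Combining, each pre-$R$ coordinate of $\widetilde{\vv W}_{n+1}$ is bounded by the corresponding pre-$R$ coordinate of $\widehat{\vv W}_{n+1}$ plus the common nonnegative constant $M_n$. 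Finally, the ordering operator $R$ is monotone with respect to $\le$ and commutes with adding a constant vector $c\vv 1$ (it merely relabels; $R(\vv x+c\vv 1)=R(\vv x)+c\vv 1$), so applying $R$ preserves the bound and gives $\widetilde{\vv W}_{n+1}\le\widehat{\vv W}_{n+1}+M_n\vv 1$, completing the induction.

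The step I expect to require the most care is verifying that $R$ behaves well under the two operations used — adding a nonnegative constant to every coordinate, and the coordinatewise inequality $\vv x\le\vv y$. The second is standard (if $\vv x\le\vv y$ then their nondecreasing rearrangements satisfy $R(\vv x)\le R(\vv y)$, since $R_i(\vv x)=\min_{|A|=i}\max_{j\in A}x_j$ is monotone in each $x_j$), and the first is immediate from the same min--max representation, but both deserve an explicit line. The only other subtlety is the sign bookkeeping with $\xi_n$, which can be negative: the inequality $M_{n-1}+\xi_n\le M_n=(M_{n-1}+\xi_n)^+$ is trivially true regardless of sign, and that is precisely what makes the argument go through. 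No subexponentiality or tail hypotheses are needed here; the lemma is a pathwise comparison.
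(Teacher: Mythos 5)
Your proposal is correct and takes essentially the same route as the paper: an induction establishing the stronger vector inequality $\widetilde{\vv W}_n \le \widehat{\vv W}_n+\vv 1M_{n-1}$, pushed through the Kiefer--Wolfowitz recursion by rewriting $\widetilde\tau_{n+1}=\widehat\tau_{n+1}-\xi_n$, pulling the constant out of the positive part (the paper uses $(u+v)^+\le u^++v^+$ directly, you first bound $M_{n-1}+\xi_n\le M_n$ and use $(u+v)^+\le u^++v$ for $v\ge0$), and using monotonicity and translation-equivariance of $R$. The only quibble is a labelling slip: the nonnegative constant extracted from the positive part is $M_n$, not $M_{n-1}$, as your own ``Combining'' sentence in fact makes clear.
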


\begin{proof}
Put $\vv e_1=(1,0,\ldots,0)$ and $\vv 1=(1,\ldots,1)$.
It suffices to prove the inequality
\begin{eqnarray}\label{upper.via.M}
\widetilde{\vv W}_n &\le& \widehat{\vv W}_n+\vv 1M_{n-1}
\quad\mbox{ a.s.}
\end{eqnarray}
We proceed by induction. For $n=1$
we have $\vv 0\le\vv 0+\vv 1M_0$. Assume inequality
(\ref{upper.via.M}) to hold
for some $n$ and prove it for $n+1$.
We have
\begin{eqnarray*}
\widetilde{\vv W}_{n+1} &=&
R(\widetilde{\vv W}_n+\vv e_1\sigma_n
-\vv 1\widetilde\tau_{n+1})^+\\
&\le& R(\widehat{\vv W}_n+\vv 1M_{n-1}
+\vv e_1\sigma_n-\vv 1\widetilde\tau_{n+1})^+\\
&=& R(\widehat{\vv W}_n+\vv e_1\sigma_n
-\vv 1\widehat\tau_{n+1}
+\vv 1(M_{n-1}+\xi_n))^+.
\end{eqnarray*}
Since $(u+v)^+\le u^++v^+$,
\begin{eqnarray*}
\widetilde{\vv W}_{n+1}
&\le& R(\widehat{\vv W}_n+\vv e_1\sigma_n
-\vv 1\widehat\tau_{n+1})^++\vv 1(M_{n-1}+\xi_n)^+
\equiv \widehat{\vv W}_{n+1}+\vv 1M_n,
\end{eqnarray*}
and the proof of (\ref{upper.via.M}) is complete.
\end{proof}

The following corollary
will be used to obtain lower bounds.
It is similar to Lemma 2 in \cite{FK}.

\begin{corollary}\label{cor:lower.bound.D}
Let $\vv W_n'$ be a stable $s$-server
queue system with the same service times $\sigma_n$
as in $\vv W_n$ and with the constant interarrival times $a'$.
If $a'>a={\mathbb E}\tau$, then, for any $\varepsilon>0$,
there exists $x_0$ such that
\begin{eqnarray*}
{\mathbb P}\{D>x\} &\ge&
(1-\varepsilon){\mathbb P}\{D'>x+x_0\}\ \mbox{ for all }x.
\end{eqnarray*}
One can take $x_0$ such that
$$
{\mathbb P}\Bigl\{\sup_{n\ge 0}
\sum_{i=1}^n(\tau_i-a')\le x_0\Bigr\}\ge1-\varepsilon.
$$
\end{corollary}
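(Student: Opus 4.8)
The plan is to obtain the inequality from Lemma~\ref{lem:upper.bound}, applied to the pair consisting of the original $GI/GI/s$ system and the $D/GI/s$ system with deterministic interarrival times $a'$. Concretely, in Lemma~\ref{lem:upper.bound} I would let the ``hat'' system be the original one ($\widehat\tau_n=\tau_n$, $\widehat D_n=D_n$) and the ``tilde'' system be the one with $\widetilde\tau_n\equiv a'$ ($\widetilde D_n=D'_n$), both started from the empty state $\vv W_1=\vv 0$. Then $\xi_n=\widehat\tau_{n+1}-\widetilde\tau_{n+1}=\tau_{n+1}-a'$ has mean $a-a'<0$, and the lemma gives, for every $n\ge1$,
$$
D'_n\ \le\ D_n+M_{n-1},\qquad M_0=0,\quad M_k=(M_{k-1}+\tau_{k+1}-a')^+,
$$
so that $D_n\ge D'_n-M_{n-1}$ pointwise.

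The crucial point is that $D'_n$ and $M_{n-1}$ are independent: in the deterministic-arrival system $D'_n$ is a function of $\sigma_1,\ldots,\sigma_{n-1}$ and $a'$ alone, while $M_{n-1}$ is a function of $\tau_2,\ldots,\tau_n$ alone, and the sequences $\{\sigma_k\}$ and $\{\tau_k\}$ are mutually independent. Hence, for all $x$ and all $x_0>0$,
$$
{\mathbb P}\{D_n>x\}\ \ge\ {\mathbb P}\{D'_n>x+x_0,\ M_{n-1}\le x_0\}\ =\ {\mathbb P}\{D'_n>x+x_0\}\cdot{\mathbb P}\{M_{n-1}\le x_0\}.
$$
This factorisation is what makes the final estimate multiplicative in $\varepsilon$ rather than merely additive (an additive $-\varepsilon$ correction would be useless in the regime of large $x$).

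Next I would bound $M_{n-1}$ uniformly in $n$. Unrolling the recursion and using that the increments $\tau_{k+1}-a'$ are i.i.d.\ with the law of $\tau-a'$, one gets $M_n\stackrel{d}{=}\max_{0\le k\le n}\sum_{i=1}^k(\tau_i-a')$, which is nondecreasing in $n$ and, since $a'>a={\mathbb E}\tau$, converges almost surely to the almost surely finite random variable $\sup_{n\ge0}\sum_{i=1}^n(\tau_i-a')$. Therefore ${\mathbb P}\{M_{n-1}>x_0\}\le{\mathbb P}\{\sup_{n\ge0}\sum_{i=1}^n(\tau_i-a')>x_0\}$ for every $n$, and if $x_0$ is chosen with ${\mathbb P}\{\sup_{n\ge0}\sum_{i=1}^n(\tau_i-a')\le x_0\}\ge1-\varepsilon$ then ${\mathbb P}\{M_{n-1}\le x_0\}\ge1-\varepsilon$ for all $n\ge1$; plugging this in, ${\mathbb P}\{D_n>x\}\ge(1-\varepsilon){\mathbb P}\{D'_n>x+x_0\}$ for all $n$ and $x$.

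Finally, both systems being stable, $D_n$ and $D'_n$ converge in total variation to $D$ and $D'$, so letting $n\to\infty$ for each fixed $x$ yields ${\mathbb P}\{D>x\}\ge(1-\varepsilon){\mathbb P}\{D'>x+x_0\}$ for all $x$, with the stated choice of $x_0$. I do not anticipate a real obstacle here; the only points requiring a little care are the index bookkeeping between $\xi_n$ and $M_n$, the observation $D'_n\perp M_{n-1}$, and identifying $\sup_{n\ge0}\sum_{i=1}^n(\tau_i-a')$ as a uniform stochastic upper bound for $M_{n-1}$.
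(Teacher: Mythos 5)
Your proposal is correct and follows essentially the same route as the paper: apply Lemma \ref{lem:upper.bound} with the deterministic-arrival system as the ``tilde'' system to get $D_n\ge D'_n-M_{n-1}$, use the independence of $D'_n$ (a function of the $\sigma$'s only) and $M_{n-1}$ (a function of the $\tau$'s only) to factorise, bound $M_{n-1}$ stochastically by $\sup_{n\ge0}\sum_{i=1}^n(\tau_i-a')$ which is a.s.\ finite since $a'>a$, and let $n\to\infty$. The only differences are cosmetic (your uniform stochastic bound on $M_{n-1}$ versus the paper's appeal to its weak limit, and trivial index bookkeeping), so nothing further is needed.
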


\begin{proof}
Take $\widetilde\tau_n=a'$ and $\widehat\tau_n=\tau_n$
in Lemma \ref{lem:upper.bound},
then $\xi_n=\tau_n-a'$.
A weak limit, $M$, of the sequence $M_n$ exists (since
${\mathbb E}\xi_1=a-a'<0$)
and has the same distribution as
\begin{eqnarray*}
M &=_{\rm st}& \max\{0,\ \xi_1,\
\xi_1+\xi_2,\ \ldots,\ \xi_1+\cdots+\xi_n,\ \ldots\}.
\end{eqnarray*}
By Lemma \ref{lem:upper.bound}, $D_n'\le D_n+M_{n-1}$.
Hence, $D_n\ge D'_n-M_{n-1}$
Since $D'_n$ does not depend on $\tau$'s,
$D'_n$ and $M_{n-1}$ are independent. Therefore,
\begin{eqnarray*}
{\mathbb P}\{D_n>x\} &\ge&
{\mathbb P}\{M_{n-1}\le x_0\}{\mathbb P}\{D'_n>x+x_0\}.
\end{eqnarray*}
Letting $n$ go to infinity,
we obtain the desired bound.
\end{proof}

\section{The case $\rho<1$, proof of Theorem \ref{co.s.max}.}
\label{k=0}

The lower bound in Theorem \ref{co.s.max} follows
from Lemma \ref{l.lower.k=0} below which also
generalises Theorem \ref{th.lower}
(see Section \ref{lower.bound}) in the case $k=0$.

\begin{lemma}\label{l.lower.k=0}
Let $\rho>0$. Then, for any function
$h(x)\to\infty$ as $x\to\infty$,
$$
{\mathbb P}\{D>x\} \ge \frac{\rho^s+o(1)}{s!}
\overline{B_r}^s(x+h(x)).
$$
In particular, if the residual time
distribution $B_r$ is long-tailed (that is,
$\overline B_r(x+1) \sim \overline B_r(x)$
as $x\to\infty$), then
$$
{\mathbb P}\{D>x\} \ge \frac{\rho^s+o(1)}{s!}
\overline{B_r}^s(x)\ \mbox{ as }x\to\infty.
$$
\end{lemma}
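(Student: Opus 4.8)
\textbf{Proof plan for Lemma \ref{l.lower.k=0}.}
The plan is to make rigorous the heuristic of ``$s$ big jumps'' spelled out in the introduction for the case $k=0$: to force $D$ large we make $s$ consecutive service times very large, so that all $s$ servers are simultaneously busy with huge residual workloads, and then read off the resulting lower bound for the minimal workload. Concretely, fix a large $x$ and an auxiliary function $h(x)\to\infty$; I would choose a time index $n$ in the (shifted) stationary regime and look at the event that service times $\sigma_{n-s},\dots,\sigma_{n-1}$ are all ``big'' in a sense that makes the $n$-th customer see all $s$ lines loaded above level $x$. Since we only need a lower bound, we are free to restrict attention to a convenient sub-event and discard everything else.

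The first step is to set up the stationary Kiefer--Wolfowitz vector $\vv W$ and express the event $\{D>x\}=\{W_1>x\}$ in terms of the service times and interarrival times feeding the last $s$ steps of the recursion \eqref{KiW}. The key combinatorial point: if in the block of $s$ steps ending at the arrival of customer $n$ the $s$ ``excess'' service times are each large, then, running the recursion $\vv W_{m+1}=R((W_{m1}+\sigma_m-\tau_{m+1})^+,(W_{m2}-\tau_{m+1})^+,\dots)$ forward, after $j$ steps at least $j$ coordinates have been ``boosted'' by one of these large service inputs, and what is subtracted is only the partial sum of $j$ interarrival times. So on an event where $\sigma_{n-s+i}$ exceeds roughly $x+h(x)$ plus the relevant interarrival partial sum (whose typical size is $O(1)\cdot$ a constant times the number of steps, i.e. negligible compared with $h(x)$ with high probability), one gets $W_{n1}>x$ deterministically. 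The second step is to pass from service times $\sigma$ to the residual distribution $B_r$: this is exactly the mechanism behind the single-server relation \eqref{W.single}, and here it contributes the factor $\rho^s/s!$ --- the $\rho$ per server comes from the renewal/length-biasing that turns ${\mathbb P}\{\sigma>\cdot\}$ into $\rho\,\overline B_r(\cdot)$ (morally: a big service time is ``in progress'' for a duration with tail $\overline B_r$, and this happens at rate proportional to $\rho$), and the $1/s!$ comes from ordering/overcounting among the $s$ servers. I would make this precise by a direct renewal-type computation or, more cleanly, by invoking Corollary \ref{cor:lower.bound.D} to reduce to the $D/GI/s$ system with deterministic interarrival times $a'>a$ (at the cost of an $\varepsilon$ and a shift $x_0$, which is harmless since $h(x)\to\infty$), where the bookkeeping of partial sums of interarrivals becomes trivial.

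The third step is the asymptotic extraction: after restricting to the good event and using independence of the $s$ relevant service times, the probability is bounded below by $(1-\varepsilon)$ times (a combinatorial factor) times $\prod_{i=1}^s \big(\rho+o(1)\big)\overline B_r\big(x+h(x)+o(h(x))\big)$, which, after absorbing the $o(h(x))$ corrections into the argument and letting $\varepsilon\downarrow0$, yields $\frac{\rho^s+o(1)}{s!}\overline{B_r}^s(x+h(x))$. The final sentence of the lemma is then immediate: if $B_r\in\mathcal L$ then $\overline B_r(x+h(x))\sim\overline B_r(x)$ for any $h(x)$ growing slowly enough (and one may always replace a given $h$ by $\min(h,\sqrt{\cdot})$ in the bound), so the shift disappears.

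\textbf{Main obstacle.} The delicate point is not the tail algebra but the deterministic domination argument inside \eqref{KiW}: one must verify that $s$ large ``injected'' service times really do lift \emph{all} $s$ ordered coordinates above $x$ after $s$ steps, keeping careful track of which coordinate receives the $\sigma$-increment at each step (the operator $R$ reshuffles things) and of the cumulative $-\tau$ subtractions. I expect this to require a short induction showing that after $j$ steps the $j$ largest coordinates of $\vv W$ each exceed $x+h(x)$ minus the partial interarrival sum over those $j$ steps; choosing the thresholds for the $\sigma_{n-s+i}$ to compensate the worst such partial sum (and controlling that partial sum with probability $\ge1-\varepsilon$, or eliminating it entirely via the $D/GI/s$ reduction) closes the gap. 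Everything else --- the renewal factor $\rho$, the $1/s!$, the long-tailed simplification --- is routine once this deterministic core is in place.
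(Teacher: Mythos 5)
There is a genuine gap, and it sits exactly where you declared things ``routine''. Your core event --- the last $s$ consecutive service times $\sigma_{n-s},\ldots,\sigma_{n-1}$ all exceeding roughly $x+h(x)$ plus interarrival sums --- has probability of order $\overline B^{\,s}(x)$, which for heavy tails is far smaller than the claimed $\overline{B_r}^{\,s}(x+h(x))$; no choice of thresholds on a fixed window of $s$ arrivals can produce integrated tails. The mechanism that actually produces $\rho\,\overline B_r$ per big jump is that each of the $s$ big service times may occur at an \emph{arbitrary} time $i_j<n$ in the past, provided $\sigma_{i_j}>x+(n-i_j)a'$ (threshold growing linearly with the distance to the observation epoch, to survive the drain), and one must then sum over \emph{all} configurations $1\le i_1<\cdots<i_s<n$. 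This is where the paper's work lies: the events for different configurations are not disjoint, so the paper intersects each $A_n(\vv i)$ with a ``no other large jobs'' event $C_n(\vv i)$ to make the union probability equal the sum up to a $1-o(1)$ factor, and then the multiple sum $\sum_{i_1<\cdots<i_s}\overline B(x+i_1a')\cdots\overline B(x+i_sa')$ is bounded below via a combinatorial lemma (Lemma \ref{q.k}) by $\frac{1}{s!}\bigl(\sum_{j\ge s}\overline B(x+ja')\bigr)^s\ge\frac{1}{s!}\bigl((b/a')\overline B_r(x+sa')\bigr)^s$. Your proposal replaces all of this by the phrase ``length-biasing turns ${\mathbb P}\{\sigma>\cdot\}$ into $\rho\overline B_r$, made precise by a direct renewal-type computation''; that computation \emph{is} the proof, and the overcounting/disjointness issue is not mentioned at all. (Also, the $1/s!$ comes from ordering the $s$ jump \emph{times}, not from overcounting among servers.)

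Conversely, the step you single out as the main obstacle --- showing that the $s$ injected large service times lift all coordinates of the Kiefer--Wolfowitz vector above $x$ --- is essentially immediate and needs no induction on the ordered coordinates: each large customer joins some line, which thereafter stays above $x$ because it can drain by at most $a'$ per step; and if a later large customer joins an already-large line, that line was minimal, so \emph{all} lines already exceeded the drain-compensated threshold at that moment. What you got right matches the paper: the reduction to deterministic interarrival times $a'>a$ via Corollary \ref{cor:lower.bound.D} (absorbing the shift $x_0$ and the factor $1-\varepsilon$, then letting $a'\downarrow a$, $\varepsilon\downarrow 0$), and the final observation that $B_r\in{\cal L}$ lets one remove $h(x)$. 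But as written the proposal does not reach the stated bound of order $\overline{B_r}^{\,s}$; to repair it you must replace the fixed-window big-jump event by the union over all $s$-tuples of past jump epochs with growing thresholds and supply the disjointification and the summation lemma.
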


We start with an auxiliary result.

\begin{lemma}\label{q.k}
Let $\{q_i\}_{i\ge 1}$ be a non-increasing
sequence of positive numbers. Then, for any $s\ge 1$,
\begin{eqnarray*}
\sum_{1\le i_1<\ldots<i_s}
q_{i_1}\cdot\ldots\cdot q_{i_s}
&\ge& \frac{1}{s!}(q_s+q_{s+1}+\ldots)^s.
\end{eqnarray*}
\end{lemma}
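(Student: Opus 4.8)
The plan is to prove Lemma~\ref{q.k} by induction on $s$, isolating the smallest index $i_1$ in each product. The base case $s=1$ is immediate, since $\sum_{i\ge 1}q_i\ge\sum_{i\ge 1}q_i$ trivially; more precisely the inequality reads $q_1+q_2+\ldots\ge q_1+q_2+\ldots$, which holds. For the inductive step, assume the claim holds for $s-1$ and fix $s\ge 2$.

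\medskip

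First I would split the $s$-fold sum according to the value $j$ of the least index $i_1$:
$$
\sum_{1\le i_1<\ldots<i_s}q_{i_1}\cdots q_{i_s}
=\sum_{j\ge 1}q_j\sum_{j<i_2<\ldots<i_s}q_{i_2}\cdots q_{i_s}.
$$
For each fixed $j$, the inner sum is an $(s-1)$-fold sum over indices all exceeding $j$, hence bounded below by the same $(s-1)$-fold sum taken over the shifted non-increasing sequence $\{q_{j+i}\}_{i\ge 1}$. Applying the inductive hypothesis to this shifted sequence gives
$$
\sum_{j<i_2<\ldots<i_s}q_{i_2}\cdots q_{i_s}
\ge\frac{1}{(s-1)!}\bigl(q_{j+s-1}+q_{j+s}+\ldots\bigr)^{s-1}.
$$
Therefore
$$
\sum_{1\le i_1<\ldots<i_s}q_{i_1}\cdots q_{i_s}
\ge\frac{1}{(s-1)!}\sum_{j\ge 1}q_j\,\bigl(q_{j+s-1}+q_{j+s}+\ldots\bigr)^{s-1}.
$$

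\medskip

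The remaining task is to show that the last sum is at least $\frac{1}{s}\bigl(q_s+q_{s+1}+\ldots\bigr)^s$. Write $T_j:=q_{j+s-1}+q_{j+s}+\ldots$, so that $T_1=q_s+q_{s+1}+\ldots$ is the target quantity and $T_j$ is non-increasing in $j$ with $T_j-T_{j+1}=q_{j+s-1}$. Since $q$ is non-increasing, $q_j\ge q_{j+s-1}=T_j-T_{j+1}$, and hence
$$
\sum_{j\ge 1}q_j\,T_j^{s-1}\ \ge\ \sum_{j\ge 1}(T_j-T_{j+1})\,T_j^{s-1}.
$$
Now the right-hand side is a lower Riemann-type sum: because $t\mapsto t^{s-1}$ is non-decreasing, $(T_j-T_{j+1})T_j^{s-1}\ge\int_{T_{j+1}}^{T_j}t^{s-1}\,dt$, and summing the telescoping integral over $j\ge 1$ yields $\int_{0}^{T_1}t^{s-1}\,dt=T_1^s/s$ (using $T_j\to 0$, which holds automatically since $\sum q_i$ converges when $T_1<\infty$; if $T_1=\infty$ the claimed bound is vacuous). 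Combining this with the factor $1/(s-1)!$ gives exactly $\frac{1}{s!}T_1^s$, completing the induction.

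\medskip

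The step I expect to be the main obstacle is the passage from $\sum_j q_j T_j^{s-1}$ to $T_1^s/s$: one must be careful that replacing $q_j$ by $q_{j+s-1}$ is legitimate (it is, by monotonicity, but it is the place where the shift by $s-1$ in the statement is genuinely used) and that the Riemann-sum comparison is set up with the correct monotonicity so that it produces a lower bound rather than an upper bound. Convergence issues ($T_j\to 0$) are a minor technical point handled by noting the inequality is trivial when the series diverges.
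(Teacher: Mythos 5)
Your proof is correct, but it takes a genuinely different route from the paper's. The paper argues in one combinatorial step: for $1\le i_1<\ldots<i_s$ it shifts indices via $i_m\mapsto i_m+(s-m)$, which by monotonicity of $\{q_i\}$ can only decrease the product and maps strictly increasing tuples bijectively onto weakly increasing tuples with all entries at least $s$; the sum over weakly increasing tuples is then at least $\frac{1}{s!}$ times the unrestricted sum over $i_1,\ldots,i_s\ge s$, which factorizes as $(q_s+q_{s+1}+\ldots)^s$. Your induction on $s$ (peeling off the smallest index $i_1=j$, applying the hypothesis to the shifted sequence $q_{j+\cdot}$, and then bounding $\sum_j q_jT_j^{s-1}\ge\sum_j(T_j-T_{j+1})T_j^{s-1}\ge\int_0^{T_1}t^{s-1}\,dt=T_1^s/s$) is a valid alternative; your inequality $q_j\ge q_{j+s-1}$ uses monotonicity in exactly the place where the paper uses its index shift, so the shift by $s-1$ enters both arguments for the same reason. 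The paper's proof is shorter and purely discrete, with no limiting or convergence considerations; yours trades that for an induction plus a Riemann-sum comparison, which is arguably more systematic but requires the analytic care you flagged ($T_j\to0$). One small quibble: when $T_1=\infty$ the claimed bound is not literally vacuous --- it asserts that the left-hand side is infinite --- but this is immediate because the $q_i$ are positive (fix $i_1=1,\ldots,i_{s-1}=s-1$ and sum over $i_s\ge s$), so the gap is cosmetic rather than substantive.
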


\begin{proof}
If $1\le i_1<\ldots<i_s$ then
$s\le i_1+(s-1)\le i_2+(s-2)\le\ldots\le i_{s-1}+1\le i_s$ and
$$
q_{i_1}\cdot q_{i_2}\cdot\ldots\cdot q_{i_s}
\ge q_{i_1+s-1}\cdot q_{i_2+s-2}\ldots\cdot q_{i_s},
$$
because $\{q_i\}$ is a non-increasing sequence.
Thus,
\begin{eqnarray*}
\sum_{1\le i_1<\ldots<i_s}
q_{i_1}\cdot\ldots\cdot q_{i_s}
&\ge& \sum_{s\le i_1\le \ldots\le i_s}
q_{i_1}\cdot\ldots\cdot q_{i_s}\\
&\ge& \frac{1}{s!}\sum_{i_1,\ldots,i_s\ge s}
q_{i_1}\cdot\ldots\cdot q_{i_s},
\end{eqnarray*}
which yields the conclusion of the lemma.
\end{proof}

\begin{proofof}{Lemma \ref{l.lower.k=0}}
Our estimation is based on calculations involving $s$
big jumps.
This technique was already used in \cite{FK} in
the case $s=2$, where a lower bound
(which is better than the one presented in
Lemma \ref{l.lower.k=0})
was obtained under the extra condition that $B_r$ is long-tailed.
The bound in \cite{FK} is exact in the sense that
it provides the right asymptotics under further
assumptions.

Following Corollary \ref{cor:lower.bound.D},
define the auxiliary $s$-server system $\vv W'_n$
having the same service times $\sigma_n$ and
constant interarrival times $a'$, $a'>a$.
For $\vv i=(i_1,\ldots,i_s)$,
$1\le i_1<\ldots<i_s<n$, define  events
$A_n(\vv i)$ and $C_n(\vv i)$ as
\begin{eqnarray*}
A_n(\vv i) &=& \{\sigma_{i_1}>x+(n-i_1)a', \ldots,
\sigma_{i_s}>x+(n-i_s)a'\}
\end{eqnarray*}
and
\begin{eqnarray*}
C_n(\vv i)
&=& \bigcap_{i\le n,i\ne i_1,\ldots,i_s}
\{\sigma_i\le x+(n-i)a'\}.
\end{eqnarray*}
Since the mean ${\mathbb E}\sigma$ exists, we have that,
uniformly in $n$ and $\vv i$,
\begin{eqnarray*}
{\mathbb P}\{C_n(\vv i)\}
=1-{\mathbb P}\{\overline{C_n(\vv i)}\}
&\ge& 1-\sum_{i=0}^\infty {\mathbb P}\{\sigma_1>x+ia'\}
\to 1 \quad\mbox{ as }x\to\infty.
\end{eqnarray*}
For each vector $\vv i$, events $A_{n}(\vv i)$ and
$C_{n}(\vv i)$ are independent. Further,
events $A_n(\vv i)\cap C_n(\vv i)$
are disjoint for distinct vectors $\vv i$.
These observations together yield
\begin{eqnarray}\label{union.of.An}
{\mathbb P}\Bigl\{\bigcup_{\vv i}A_n(\vv i)
\cap C_n(\vv i)\Bigr\}
&=& \sum_{\vv i}{\mathbb P}\{A_n(\vv i)\}{\mathbb P}\{C_n(\vv i)\}
\ge (1-o(1))\sum_{\vv i}{\mathbb P}\{A_n(\vv i)\}
\end{eqnarray}
as $x\to\infty$, uniformly in $n$.
The event $A_n(\vv i)$ implies that $D'_n>x$. Therefore,
\begin{eqnarray*}
{\mathbb P}\{D'_n>x\} &\ge& (1-o(1))
\sum_{\vv i}{\mathbb P}\{A_n(\vv i)\}
\end{eqnarray*}
as $x\to\infty$, uniformly in $n$. We now prove that
\begin{eqnarray}\label{Sigma.k=0}
\lim_{n\to\infty}
\sum_{\vv i} {\mathbb P}\{A_n(\vv i)\}
&\ge& \frac{(b/a')^s}{s!}\overline{B_r}^s(x+sa').
\end{eqnarray}
Indeed, by the independence of the $\sigma$'s,
\begin{eqnarray*}
\sum_{1\le i_1<\ldots<i_s<n}
{\mathbb P}\{A_n(\vv i)\}
&=& \sum_{1\le i_1<\ldots,i_s<n}
\overline B(x+(n-i_1)a')
\cdot\ldots\cdot\overline B(x+(n-i_s)a'),
\end{eqnarray*}
and the left side of (\ref{Sigma.k=0}) equals
\begin{eqnarray*}
\sum_{1\le i_1<\ldots<i_s}
\overline B(x+i_1a')\cdot\ldots\cdot\overline B(x+i_sa').
\end{eqnarray*}
By Lemma \ref{q.k} with $q_i=\overline B(x+ia')$,
the latter sum is not smaller than
\begin{eqnarray*}
\frac{1}{s!} \Bigl(\sum_{j=s}^\infty
\overline B(x+ja')\Bigr)^s.
\end{eqnarray*}
Since the tail probability is a non-increasing function,
\begin{eqnarray*}
\sum_{j=s}^\infty\overline B(x+ja')
\ge \frac{1}{a'} \int_{sa'}^\infty\overline B(x+z)dz
=\rho\overline B_r(x+sa').
\end{eqnarray*}
Combining altogether, we conclude (\ref{Sigma.k=0}).
Then by Corollary \ref{cor:lower.bound.D},
for every $\varepsilon>0$ there exists $x_0$ such that
\begin{eqnarray*}
{\mathbb P}\{D>x\} &\ge&
(1-\varepsilon){\mathbb P}\{D'>x+x_0\}\\
&\ge& (1-\varepsilon-o(1))\frac{(b/a')^s}{s!}
\overline{B_r}^s(x+sa'+x_0).
\end{eqnarray*}
By the arbitrary choice of $a'>a$ and $\varepsilon>0$,
the proof of Lemma \ref{l.lower.k=0} is complete.
\end{proofof}

\begin{proofof}{the upper bound in Theorem \ref{co.s.max}}
We start with the case of deterministic $\tau$,
i.e., $\tau_n\equiv a$. We follow the lines from \cite{FK}
where, for $\rho<1$ (that is for $b<a$),
the following simple majorant was introduced.

Let $\sigma_{ni}$, $n\ge1$, $i\le s$, be
independent random variables with common distribution $B$.
Consider $s$ auxiliary $D/GI/1$
queueing systems which work in parallel:
at every time instant $T_n = na$, $n=1$, 2, \ldots,
a batch of $s$ customers arrives,
one customer per each queue.
Service times in queue $i$ are equal to $\sigma_{ni}$.
Denote by $U_{ni}$, $i=1$, \ldots, $s$, the waiting
times in the $i$th queue, $U_{n+1,i}= (U_{ni}+\sigma_{ni}-a)^+$,
and let $U_{1i}=0$.
Since the arrival process is deterministic and service
times are independent, vector $(U_{n1},\ldots , U_{ns})$
has independent identically distributed coordinates,
and, as $n\to\infty$, its weak limit
$(U_{1},\ldots,U_{s})$ exists (since ${\mathbb E}\sigma<a$)
and contains independent identically distributed coordinates too.
Here $U_i$ is the stationary waiting time in
the $i$th auxiliary queue.

Now we introduce a coupling of $s$ single-server systems
and of the $s$-server system $D/GI/s$. Namely, we
determine the service times $\sigma_n$ in the original
$D/GI/s$ system by induction.
Start with $\sigma_1=\sigma_{1,1}$.
Assume that $\sigma_1$, \ldots, $\sigma_{n-1}$
have been already defined. Then the delay vectors
$\vv V_1$, \ldots, $\vv V_n$ are defined too, and
we know the number $i_n=\min\{i:V_{ni}=D_n\}$.
Then let $\sigma_n=\sigma_{n,i_n}$.

By monotonicity, $D_n\le \min\{U_{ni},\ i\le s\}$
with probability 1. Hence,
\begin{eqnarray}\label{D.min}
D &\le& \min\{U_i,\ i\le s\}.
\end{eqnarray}
Due to independence,
\begin{eqnarray*}
{\mathbb P}\{D>x\} &\le& {\mathbb P}^s\{U_1>x\},
\end{eqnarray*}
and we can apply known results for the single
server queue: 
from (\ref{W.single}),
\begin{eqnarray*}
{\mathbb P}\{U_1>x\} &\sim&
\frac{\rho}{1-\rho}\overline B_r(x),
\end{eqnarray*}
which gives us the upper bound
in Theorem \ref{co.s.max} if interarrival times are deterministic.
 Now the proof in the general case follows
from \cite[Lemma 1]{FK}.
\end{proofof}

\section{Auxiliary results.}\label{lln}

In this Section we collect a number
of auxiliary facts related to monotonicity and
to
the strong law of large numbers for
unstable multi-server systems.
The results seem not to be new, so we
provide only short sketches of proofs for self-containedness.

Let $\vv W_n$ be a sequence satisfying
the Kiefer-Wolfowitz recursion (\ref{KiW}),
with initial value $\vv W_1 \ge 0$.

\begin{lemma}\label{monotonicity}
(1)
For any $n$, $\vv W_n$ is a non-decreasing function
of the initial value and of service times and a non-increasing function of interarrival
times. This means that if $\widetilde{\vv W}_n$
is a sequence satisfying another Kiefer-Wolfowitz recursion  with initial
value $\widetilde{\vv W}_1$ and with interarrival times $\{\widetilde{\tau}_n\}$ and service
times $\{\widetilde{\sigma}_n\}$ and if
$\vv W_1 \le \widetilde{\vv W}_1$ (coordinate-wise), $\sigma_j\le \widetilde{\sigma}_j$,
and $\tau_j \ge \widetilde{\tau}_j$, for $j=1,\ldots ,n-1$,
then
$\vv W_n \le \widetilde{\vv W}_n$.\\
(2)
For any $n\ge 2$, the difference $\sum_{i=1}^s (W_{ni}
-W_{n-1,i})$ is a non-increasing function of the initial
value $\vv W_1$: if ${\vv W}_1 \le \widetilde{\vv W}_1$,
then $\sum_{i=1}^s (W_{ni}
-W_{n-1,i}) \ge \sum_{i=1}^s (\widetilde{W}_{ni}
-\widetilde{W}_{n-1,i}).$
\end{lemma}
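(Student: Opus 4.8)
The plan is to prove both statements by induction on $n$ using the explicit form of the Kiefer--Wolfowitz recursion (\ref{KiW}) together with the elementary monotonicity properties of the two operations that enter it: the coordinatewise map $\vv w \mapsto (w_1+\sigma-\tau, w_2-\tau,\ldots,w_s-\tau)^+$, and the reordering operator $R$. For part (1), the base case $n=1$ is the hypothesis $\vv W_1\le\widetilde{\vv W}_1$. For the inductive step, suppose $\vv W_n\le\widetilde{\vv W}_n$ (coordinatewise). The first server's new pre-ordering coordinate $W_{n1}+\sigma_n-\tau_{n+1}$ is nondecreasing in $W_{n1}$, nondecreasing in $\sigma_n$, and nonincreasing in $\tau_{n+1}$; the remaining coordinates $W_{ni}-\tau_{n+1}$ are nondecreasing in $W_{ni}$ and nonincreasing in $\tau_{n+1}$. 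Under the hypotheses $\sigma_n\le\widetilde\sigma_n$ and $\tau_{n+1}\ge\widetilde\tau_{n+1}$ this gives a coordinatewise inequality between the two pre-ordering vectors; since $u\mapsto u^+$ is nondecreasing, the inequality survives the positive-part operation. The only nontrivial point is that $R$ preserves the coordinatewise order: if $\vv u\le\vv v$ then $R_j(\vv u)\le R_j(\vv v)$ for every $j$. This follows from the order-statistic characterisation $R_j(\vv u)=\min_{|I|=s-j+1}\max_{i\in I}u_i$ (or, equivalently, $R_j(\vv u)=\max_{|J|=j}\min_{i\in J}u_i$), from which $R_j$ is manifestly a monotone function of the coordinates. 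Applying this to the ordered Kiefer--Wolfowitz update yields $\vv W_{n+1}\le\widetilde{\vv W}_{n+1}$, completing the induction.

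For part (2), I would fix the interarrival and service times to be common to both systems (this is the setting of the claim) and again argue by induction on $n\ge 2$, tracking the quantity $\Delta_n:=\sum_{i=1}^s(W_{ni}-W_{n-1,i})$ as a function of $\vv W_1$. The natural first observation is a "conservation" identity for a single step: from (\ref{KiW}), $\sum_i W_{n+1,i}=\sum_i R_i(\cdots)=\sum_i(\text{pre-ordering coordinates})$ since $R$ only permutes, hence
\begin{eqnarray*}
\sum_{i=1}^s W_{n+1,i}
&=& (W_{n1}+\sigma_n-\tau_{n+1})^+ + \sum_{i=2}^s (W_{ni}-\tau_{n+1})^+.
\end{eqnarray*}
So $\Delta_{n+1}=\sum_i W_{n+1,i}-\sum_i W_{ni}$ and the whole question reduces to understanding how $\sum_i W_{n+1,i}-\sum_i W_{ni}$ depends on $\vv W_n$ — and then composing with part (1). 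Writing $g(\vv w):=(w_1+\sigma_n-\tau_{n+1})^++\sum_{i=2}^s(w_i-\tau_{n+1})^+-\sum_{i=1}^s w_i$, one checks that $g$ is a nonincreasing function of each coordinate $w_i$: indeed $(w+c)^+-w=\max(-w,c)$ is nonincreasing in $w$. Thus $\Delta_{n+1}=g(\vv W_n)$ is nonincreasing in $\vv W_n$, and since (by part (1)) $\vv W_n$ is coordinatewise nondecreasing in $\vv W_1$, the composition $\Delta_{n+1}$ is nonincreasing in $\vv W_1$. The base case $n=2$ is exactly $\Delta_2=g(\vv W_1)$, already covered by the same computation. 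Note that one must be slightly careful: the ordered vectors in the two systems need not have their coordinates "aligned" — i.e. the server carrying the large workload may be in a different position — but because $g$ above is a symmetric function of $w_1$ only through the first (smallest, after ordering) coordinate and of the rest symmetrically, one should verify $g$ is in fact a nonincreasing function on ordered vectors; this is where I expect to need a short, careful argument rather than a one-line appeal to symmetry.

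The main obstacle, then, is the bookkeeping in part (2): making precise that the relevant one-step increment functional is monotone in the right direction on the image of $R$, and that part (1)'s coordinatewise monotonicity can be legitimately composed with it despite the reordering. The monotonicity of $R_j$ and of $u\mapsto u^+$ are routine; the conservation identity is immediate; the only place deserving care is confirming that $w_1\mapsto(w_1+\sigma_n-\tau_{n+1})^+-w_1$ being decreasing (rather than the other coordinates' analogous terms) is consistent with $w_1$ being the \emph{minimal} ordered coordinate, so that the hypothesis $\vv W_1\le\widetilde{\vv W}_1$ propagates correctly through the ordering at each step. I would present this as a clean two-part induction, stating the $R_j$-monotonicity and the identity $(w+c)^+-w=\max(-w,c)$ as the two lemmatic ingredients, and keeping the verification of part (2) tight.
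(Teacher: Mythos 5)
Your proposal is correct and is essentially the paper's own argument: the paper disposes of part (1) by the one-line remark that $R$ and $\max(0,\cdot)$ are monotone, and of part (2) by noting that $(x+y)^+-x$ is non-increasing in $x$ — exactly your identity $(w+c)^+-w=\max(-w,c)$ applied to the one-step increment and composed with part (1), your "conservation identity" being automatic since $R$ only permutes coordinates. Two small remarks: your order-statistic formula has the indices swapped (it should be $R_j(\vv u)=\min_{|I|=j}\max_{i\in I}u_i=\max_{|J|=s-j+1}\min_{i\in J}u_i$; as written you describe the $(s-j+1)$th statistic), which is harmless since monotonicity of $R_j$ follows either way, and the "alignment" worry in part (2) evaporates because $g$ is evaluated on the already ordered vectors $\vv W_n\le\widetilde{\vv W}_n$ that part (1) compares coordinatewise, so no further care is needed.
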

The first monotonicity property holds because both operators $R$ and
$\max (0,\cdot )$ are monotone. The second property follows
since function $(x+y)^+-x$ is non-increasing in $x$, for any
fixed $y$.

\begin{lemma}\label{slln.for.unstable}
Let $b>sa$, so the $s$-server system with workload vectors ${\vv W}_n$ is unstable.
Then,
\begin{equation}\label{SLLN1}
\frac{W_{n1}}{n}\to \frac{b-sa}{s}
\quad \mbox{and}\quad
\frac{W_{ns}}{n}\to \frac{b-sa}{s}
\quad\mbox{ as }n\to\infty,
\end{equation}
both with probability $1$ and in mean.
\end{lemma}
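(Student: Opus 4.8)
\textbf{Proof plan for Lemma \ref{slln.for.unstable}.}
The plan is to analyse the total workload $\Sigma_n := \sum_{i=1}^s W_{ni}$ and the spread $W_{ns}-W_{n1}$ separately. For the total workload, note that summing the Kiefer--Wolfowitz recursion (\ref{KiW}) over coordinates gives, on the event that all coordinates stay strictly positive after the subtraction of $\tau_{n+1}$,
\begin{equation*}
\Sigma_{n+1} = \Sigma_n + \sigma_n - s\tau_{n+1},
\end{equation*}
while in general $\Sigma_{n+1} \ge \Sigma_n + \sigma_n - s\tau_{n+1}$ because taking positive parts only increases each term; on the other hand $\Sigma_{n+1} \le \Sigma_n + \sigma_n$ trivially. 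Thus $\Sigma_n$ is squeezed between $\sum_{k<n}(\sigma_k - s\tau_{k+1})$ plus a nonnegative correction and $\sum_{k<n}\sigma_k$. To pin down the growth rate I would argue that, since $b>sa$, the idle periods become rare: once $W_{n1}$ is large (which happens eventually, and stays so, by the lower bound $W_{n1}\ge$ partial sums of a random walk with positive drift), no coordinate hits $0$, so the identity $\Sigma_{n+1}=\Sigma_n+\sigma_n-s\tau_{n+1}$ holds for all large $n$, and the classical SLLN gives $\Sigma_n/n \to b - sa$.

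Next I would show the spread $W_{ns}-W_{n1}$ is $o(n)$, in fact $O(1)$ in mean and a.s.\ bounded in a suitable sense, so that each coordinate grows at the common rate $(b-sa)/s$. The key point is a contraction/balancing property of the recursion: when all coordinates are positive, $W_{n+1,\cdot}$ is the nondecreasing rearrangement of $(W_{n1}+\sigma_n, W_{n2}, \ldots, W_{ns}) - \tau_{n+1}\mathbf 1$, and the largest coordinate receives no service-time increment, so the gap $W_{ns}-W_{n1}$ cannot grow by more than what a single fresh $\sigma_n$ can contribute and it is reduced each time the smallest server catches up. More precisely, using Lemma \ref{monotonicity}(1), I would dominate $W_{ns}-W_{n1}$ by comparing with the system started from $\vv W_1=\vv 0$ and use the fact that $\min_i W_{ni}$ in an overloaded system still increases linearly — indeed $W_{n1}\ge \max_{0\le m\le n-1}\sum_{k=m+1}^{n}(\sigma_k - s\tau_{k+1})/s$ type bounds via the single-server lower comparison — while $W_{ns}\le W_{n1} + \sum(\text{recent }\sigma\text{'s that got stuck on top})$, which is tight because the workload is spread out. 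Combining $\Sigma_n/n\to b-sa$ with $(W_{ns}-W_{n1})/n\to 0$ forces $W_{n1}/n\to (b-sa)/s$ and $W_{ns}/n\to(b-sa)/s$.

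For convergence in mean, in addition to the a.s.\ statement I would establish uniform integrability of $W_{n1}/n$ and $W_{ns}/n$. This follows from the two-sided bound $0\le W_{ns}/n \le \Sigma_n/n \le \frac1n\sum_{k=1}^{n-1}\sigma_k + \frac1n W_{1,\cdot}$-total, whose right-hand side is uniformly integrable by the SLLN in $L^1$ (it is an average of i.i.d.\ integrable variables), and then a.s.\ convergence plus uniform integrability gives $L^1$ convergence; the same domination handles $W_{n1}\le W_{ns}$.

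The main obstacle I anticipate is the spread estimate $(W_{ns}-W_{n1})/n\to 0$: the total-workload SLLN is routine once idle periods are controlled, but showing that an \emph{overloaded} multi-server queue does not let one server's backlog run away from the others requires a genuine argument about the rearrangement operator $R$. I would handle it by the monotonicity comparison of Lemma \ref{monotonicity}(1) (sandwiching between the zero-initialised system and a shifted copy) together with the elementary observation that in the zero-initialised overloaded system every server is busy from some a.s.-finite random time onward, after which $W_{ns}-W_{n1}$ evolves as a reflected process driven by the centered increments $\sigma_n - s\tau_{n+1}/s$-type terms and hence is $o(n)$; since this is the part the authors flag as "seem not to be new" and only sketch, I would keep it brief and cite the monotonicity lemma as the workhorse.
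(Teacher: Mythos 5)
Your overall plan coincides with the paper's: control the total workload $\Sigma_n=\sum_{i=1}^s W_{ni}$ by an SLLN once no positive part is active, show the spread $W_{ns}-W_{n1}$ is $o(n)$, and get $L^1$ convergence from the domination $0\le W_{ns}\le W_{1s}+\sum_{j<n}\sigma_j$ plus uniform integrability. The total-workload and uniform-integrability parts are fine as sketched. The gap is precisely in the step you yourself flag as the main obstacle: you never actually establish that the spread is $o(n)$. The paper does this with a one-line induction on the recursion (\ref{KiW}): if $W_{ns}-W_{n1}>\sigma_n$ then the spread does not increase, while if $W_{ns}-W_{n1}\le\sigma_n$ then the new spread is at most $\sigma_n$; hence $W_{n+1,s}-W_{n+1,1}\le\max(W_{1s}-W_{11},\sigma_1,\ldots,\sigma_n)$ for all $n$, and this is $o(n)$ a.s.\ by Borel--Cantelli because ${\mathbb E}\sigma<\infty$ (the events $\{\sigma_k>\varepsilon k\}$ occur only finitely often). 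Your substitutes are either wrong or unsubstantiated: the claim that the spread is ``$O(1)$ in mean and a.s.\ bounded'' is both stronger than needed and not true in the a.s.\ sense here --- immediately after a service time $\sigma_n$ exceeding the current spread, the new spread is of order $\sigma_n-(W_{ns}-W_{n1})$, so with unbounded service times $\sup_n(W_{ns}-W_{n1})=\infty$ a.s.; and the description of the spread as ``a reflected process driven by centered increments, hence $o(n)$'' is not backed by any inequality you derive. Monotonicity (Lemma \ref{monotonicity}) alone does not give the spread bound; the elementary induction above (or an equivalent) is the missing ingredient.

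A second, smaller problem is the route to ``all servers are eventually busy'': your claimed direct bound $W_{n1}\ge\max_m\frac1s\sum_{k>m}(\sigma_k-s\tau_{k+1})$ is not valid as stated, since the minimum coordinate is only bounded below by $\Sigma_n/s$ minus the spread, not by $\Sigma_n/s$ itself. The clean order of argument, as in the paper, is: $\Sigma_{n+1}\ge\Sigma_n+\sigma_n-s\tau_{n+1}$ always, so $\liminf\Sigma_n/n\ge b-sa>0$ and hence $\liminf W_{ns}/n\ge(b-sa)/s$; the spread bound then forces $W_{n1}\to\infty$, so from some a.s.\ finite $\nu$ onwards no truncation occurs, $\Sigma_n$ evolves exactly as the random walk and $\Sigma_n/n\to b-sa$; combining once more with the spread bound yields both limits in (\ref{SLLN1}). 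So the skeleton of your proposal is right, but the load-bearing spread estimate must be proved by the induction inequality rather than the heuristics offered.
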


\begin{proof}
Note that, for any $n=1,2,\ldots$,
\begin{equation}\label{ind1}
W_{n+1,s}-W_{n+1,1} \le \max (W_{1,s}-W_{1,1}, \sigma_1,\ldots , \sigma_n).
\end{equation}
Indeed, if $W_{ns}-W_{n1}>\sigma_n$, then $W_{n+1,s}-W_{n+1,1}\le W_{ns}-W_{n1}$,
and if $W_{ns}-W_{n1}\le \sigma_n$, then $W_{n+1,s}-W_{n+1,1}\le \sigma_n$, so the
induction argument completes the proof of (\ref{ind1}). Next,
\begin{equation}\label{ind2}
\max (W_{1s}-W_{11}, \sigma_1,\ldots , \sigma_n )/n \to 0  \quad \mbox{a.s.}
\end{equation}
because $(W_{1s}-W_{11})/n\to0$ and, since ${\vv E}\sigma$ is
finite, events $\{ \sigma_k/k > \varepsilon \}$ occur only finitely often,
for any $\varepsilon >0$.

Further,
$$
\frac{1}{n}\sum_{i=1}^s W_{ni} \ge \frac{1}{n}\sum_{j=1}^{n-1} (\sigma_j-s\tau_{j+1})
\to b-sa >0 \quad \mbox{a.s.,}
$$
so $\liminf_{n\to\infty}W_{ns}/n \ge (b-sa)/s$, and, from (\ref{ind1})-(\ref{ind2}),
there exists an a.s. finite random variable $\nu$ such that $W_{n1}>0$, for all $n\ge\nu$.
So, for $n\ge\nu$,
\begin{equation}\label{ind3}
\frac{1}{n}\sum_{i=1}^s W_{ni}
=\frac{1}{n}\sum_{i=1}^s W_{\nu i} + \frac{1}{n} \sum_{j=\nu}^n (\sigma_j-s\tau_{j+1})
\to b-sa \quad \mbox{a.s.,}
\end{equation}
and (\ref{ind1})-(\ref{ind3}) lead to convergence a.s. in (\ref{SLLN1}).
Finally, since  $0\le W_{ns}/n \le W_{1s}/n + \sum_{j=1}^{n-1} \sigma_j/n$
and since random variables $\sum_{j=1}^{n-1} \sigma_j/n$ are uniformly integrable,
convergence in mean also follows.
\end{proof}

\begin{lemma}\label{slln.for.s-1}
Assume $b>(s-1)a$. For any $\varepsilon>0$,
there exist $A<\infty$ and an integer $d\ge 1$ such that,
for any initial value ${\vv W}_1$ with $W_{1s}\ge A$,
$$
{\mathbb E}\{W_{1+d,1}+\ldots+W_{1+d,s}
-W_{11}-\ldots-W_{1s} \}  
\le d(b-sa+\varepsilon).
$$
\end{lemma}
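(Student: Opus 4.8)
The plan is to combine two reductions. First, by Lemma~\ref{monotonicity}(2) the quantity $\sum_{i=1}^{s}(W_{1+d,i}-W_{1,i})$ is a non-increasing function of the initial value $\vv W_1$; since every admissible initial value with $W_{1s}\ge A$ dominates $(0,\ldots,0,A)$ coordinatewise, it suffices to bound this drift for the single, smallest such value $\widehat{\vv W}_1:=(0,\ldots,0,A)$. Second, when started from $\widehat{\vv W}_1$, as long as the largest coordinate stays large it simply loses $\tau_{n+1}$ at each step (recursion~(\ref{KiW}) feeds $\sigma_n$ to the \emph{smallest} coordinate), while the remaining $s-1$ coordinates evolve as an $(s-1)$-server Kiefer--Wolfowitz system driven by the same service times $\{\sigma_j\}$ and by the interarrival times $\{\tau_{j+1}\}$. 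Since $b>(s-1)a$, this subsystem is overloaded, and Lemma~\ref{slln.for.unstable}, applied with $s-1$ in place of $s$, controls its growth in mean.

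Concretely, fix $\varepsilon>0$, let $\widehat{\vv W}_n$ be the Kiefer--Wolfowitz sequence driven by the given $\{\sigma_j\},\{\tau_j\}$ with $\widehat{\vv W}_1=(0,\ldots,0,A)$, and let $\widehat{\vv U}_n\in\mathbb{R}^{s-1}$ be the $(s-1)$-server Kiefer--Wolfowitz sequence started from the origin and driven by $\{\sigma_j\}$ and $\{\tau_{j+1}\}$, so that $\frac1n\,\mathbb{E}\bigl(\widehat U_{n1}+\cdots+\widehat U_{n,s-1}\bigr)\to b-(s-1)a$ by Lemma~\ref{slln.for.unstable}. The central step is the coupling identity: on the event $G_A:=\bigl\{\sum_{j=1}^{d}(\sigma_j+\tau_{j+1})\le A\bigr\}$ one has, for all $n=1,\ldots,1+d$,
$$
\widehat W_{ns}=A-\sum_{j=1}^{n-1}\tau_{j+1},\qquad
(\widehat W_{n1},\ldots,\widehat W_{n,s-1})=\widehat{\vv U}_n.
$$
This is proved by induction on $n$: on $G_A$ the top coordinate always exceeds $A-\sum_{j=1}^{d}\tau_{j+1}\ge\sum_{j=1}^{d}\sigma_j$, which dominates $\widehat U_{n,s-1}\le\sum_{j=1}^{n-1}\sigma_j$; hence after one application of the one-step map and the re-ordering $R$, the top coordinate stays on top (and is still nonnegative) and merely loses $\tau_{n+1}$, while the bottom $s-1$ coordinates transform exactly by the $(s-1)$-server one-step map. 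I expect this bookkeeping --- tracking the operator $R$, and using that $\widehat{\vv W}$ and $\widehat{\vv U}$ are driven by the \emph{same} randomness --- to be the main technical point; everything else is routine.

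Granting the identity, on $G_A$ we obtain $\sum_{i=1}^{s}(\widehat W_{1+d,i}-\widehat W_{1,i})=\sum_{i=1}^{s-1}\widehat U_{1+d,i}-\sum_{j=1}^{d}\tau_{j+1}$, while on $G_A^{c}$ the crude bound $\sum_i\widehat W_{n+1,i}\le\sum_i\widehat W_{n,i}+\sigma_n$ (valid since $(w-\tau)^+\le w$ and $(w+\sigma-\tau)^+\le w+\sigma$ for $w\ge0$), summed over $n\le d$, gives $\sum_{i}(\widehat W_{1+d,i}-\widehat W_{1,i})\le\sum_{j=1}^{d}\sigma_j$. Taking expectations, bounding the contribution of $G_A^{c}$ by $\mathbb{E}\bigl[\sum_{j=1}^d\sigma_j\,\mathbf 1_{G_A^{c}}\bigr]$, and using $\widehat U_{1+d,i}\ge0$ to drop the indicator in the remaining term,
$$
\mathbb{E}\sum_{i=1}^{s}(\widehat W_{1+d,i}-\widehat W_{1,i})
\le\mathbb{E}\sum_{i=1}^{s-1}\widehat U_{1+d,i}
-\mathbb{E}\Bigl[\sum_{j=1}^{d}\tau_{j+1}\,\mathbf 1_{G_A}\Bigr]
+\mathbb{E}\Bigl[\sum_{j=1}^{d}\sigma_j\,\mathbf 1_{G_A^{c}}\Bigr].
$$
Now I would first choose $d$ large, so that $\mathbb{E}\sum_{i=1}^{s-1}\widehat U_{1+d,i}\le d\bigl(b-(s-1)a+\varepsilon/2\bigr)$ by the mean convergence above; then, with $d$ fixed, let $A\to\infty$: since $\mathbb{E}\sigma,\mathbb{E}\tau<\infty$ and $\mathbb{P}(G_A)\to1$, dominated convergence gives $\mathbb{E}\bigl[\sum_{j}\tau_{j+1}\mathbf 1_{G_A}\bigr]\to da$ and $\mathbb{E}\bigl[\sum_{j}\sigma_j\mathbf 1_{G_A^{c}}\bigr]\to0$, so for all sufficiently large $A$ the right-hand side is at most $d\bigl(b-(s-1)a+\varepsilon/2\bigr)-da+\varepsilon d/2=d(b-sa+\varepsilon)$.

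Finally, fix such $d$ and $A$. For any initial value $\vv W_1$ with $W_{1s}\ge A$ we have $\vv W_1\ge(0,\ldots,0,A)=\widehat{\vv W}_1$ coordinatewise, so Lemma~\ref{monotonicity}(2), applied to each step $n=2,\ldots,1+d$ and summed telescopically, gives $\sum_{i}(W_{1+d,i}-W_{1,i})\le\sum_{i}(\widehat W_{1+d,i}-\widehat W_{1,i})$ almost surely; taking expectations yields $\mathbb{E}\sum_{i}(W_{1+d,i}-W_{1,i})\le d(b-sa+\varepsilon)$, which is the assertion. (For $s=1$ the same scheme applies, with the single coordinate equal to $A+\sum_{j=1}^{n-1}(\sigma_j-\tau_{j+1})$ on $G_A$ and the $(s-1)$-server subsystem absent.)
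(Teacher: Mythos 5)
Your proof is correct and follows the same overall strategy as the paper's: reduce by Lemma \ref{monotonicity}(2) to the extremal initial state $(0,\ldots,0,A)$, couple the $s-1$ smallest coordinates with an unstable $(s-1)$-server Kiefer--Wolfowitz system driven by the same service and interarrival times, and invoke the mean convergence in Lemma \ref{slln.for.unstable} to choose $d$. Where you differ is in how the large coordinate is kept out of the way. The paper first truncates the interarrival times at a level $C$ with ${\mathbb E}\min(\tau,C)\ge a-\varepsilon/2$ (justified by Lemma \ref{monotonicity}(1)) and takes $A=(d+1)C$, which makes the large coordinate deterministically positive for $d$ steps and yields an almost-sure identity (their (\ref{L43})); the price is a case analysis at the first time $\mu$ at which a customer joins the large queue, where one argues that all workloads are then strictly positive so the sums evolve linearly. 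You instead keep the original $\tau$'s, work on the event $G_A=\{\sum_{j\le d}(\sigma_j+\tau_{j+1})\le A\}$ on which no customer joins the large queue within $d$ steps (so the decomposition is exact there), bound the complement crudely by $\sum_{j\le d}\sigma_j$, and let $A\to\infty$ by dominated convergence. This avoids both the truncation and the stopping-time case analysis, at the cost of a non-explicit $A$, which is harmless since the lemma only asserts existence. Two details worth spelling out in your induction: the dominating bound $\sum_{j\le d}\sigma_j$ must also cover the candidate new maximum $\widehat U_{n1}+\sigma_n$ of the bottom group (it does, since $\widehat U_{n1}+\sigma_n\le\sum_{j\le n}\sigma_j$ for $n\le d$), and in the degenerate case of a tie between the top coordinate and the bottom group one should either use strict inequality in the definition of $G_A$ or observe that ties do not change the values of the ordered vector; neither point affects the conclusion.
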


\begin{proof}
By property (2) of Lemma \ref{monotonicity}, it is enough to prove the
result for initial value $W_{11}=\ldots =W_{s-1,1}=0$, $W_{s1}=A$ only.

Choose $C$ such that ${\mathbb E} \min (\tau, C) \ge a-\varepsilon /2$.
By property (1) of Lemma \ref{monotonicity}, we may prove the lemma
with interarrival times
$\min (\tau_j ,C)$  in place of $\tau_j$.

Consider an auxiliary unstable $GI/GI/(s-1)$ queue $\widehat{\vv W}_n$
with initial zero value
and, by applying the previous lemma, find $d$ such that
${\mathbb E} \sum_{i=1}^{s-1}\widehat{W}_{1+d,i} \le d(b-(s-1)a+\varepsilon/2)$.
Then return to the $s$-server queue and take $A=(d+1)C$.
We will prove that
\begin{equation}\label{L43}
\sum_{i=1}^{s}W_{1+d,i} = \sum_{i=1}^{s-1}\widehat{W}_{1+d,i} +
A - \sum_{j=1}^{d} \min (\tau_{j},C) \quad \mbox{a.s.,}
\end{equation}
then the result will follow.

Consider vectors ${\vv V}_n$ and numbers $i_n$ as in the Introduction, with initial
values $V_{1,1}=\ldots =V_{1,s-1}=0$ and $V_{1,s}=A$. Note that $V_{n,s}\ge A-(n-1)C>0$,
for all $n=1,2,\ldots,d+1$.

Let $\mu = \min ( d+1, \min \{n\ge 1 \ : \ i_n=s \} ).$ Then
$R(V_{\mu,1},\ldots,V_{\mu,s-1})=(\widehat{W}_{\mu,1}\ldots,\widehat{W}_{\mu,s-1})$
and
\begin{equation}\label{LL43}
\sum_{i=1}^s W_{\mu,i} = \sum_{i=1}^s V_{\mu,i}=
\sum_{i=1}^{s-1} V_{\mu,i}+V_{\mu,s} = \sum_{i=1}^{s-1}\widehat{W}_{\mu,i}+A-\sum_{j=1}^{\mu -1}
\min (\tau_j,C).
\end{equation}
This ends the proof of (\ref{L43}) if $\mu = d+1$.

In the case $\mu <d+1$, we may conclude that
$$
0<A-(\mu -1)C \le V_{\mu,s} = W_{\mu,1}
$$
and, therefore, $W_{n,i}>0$ and $\widehat{W}_{n,i}>0$, for all $\mu \le n \le d+1$ and $i=1,\ldots,s$.
Then, from (\ref{LL43}),
\begin{eqnarray*}
\sum_{i=1}^s W_{d+1,i}
&=&
\sum_{i=1}^s W_{\mu,i}+\sum_{j=\mu}^d (\sigma_j -s\min (\tau_j,C))\\
&=&
\sum_{i=1}^{s-1}\widehat{W}_{\mu,i} +\sum_{j=\mu}^d (\sigma_j- (s-1)\min (\tau_j,C))
+ A-\sum_{j=1}^d \min (\tau_j,C)
\end{eqnarray*}
which coincides again with the right side of (\ref{L43}).

\end{proof}

\section{Lower Bound.}\label{lower.bound}

The following result holds without any restrictions
on the service time
distribution $B$ 
(a similar result was formulated
and proved in \cite[Theorem 3.1]{SV}).

\begin{theorem}\label{th.lower}
Let $k\in\{0,1,\ldots,s-1\}$ be such that $\rho>k$.
Then, for any fixed $\delta>0$,
$$
{\mathbb P}\{D>x\} \ge \frac{\rho^{s-k}+o(1)}{(s-k)!}
\overline{B_r}^{s-k}\Bigl(\frac{\rho+\delta}{\rho-k}x\Bigr)
\quad\mbox{as }x\to\infty.
$$
\end{theorem}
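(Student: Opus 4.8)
The plan is to reduce to the deterministic-arrivals case via Corollary \ref{cor:lower.bound.D}, and then exhibit an explicit collection of disjoint ``big jump'' events under which the stationary delay in the $D/GI/s$ system exceeds $x$. First I would fix $a' > a$ and pass to the auxiliary system $\vv W'_n$ with deterministic interarrival times $a'$; by Corollary \ref{cor:lower.bound.D} it suffices to bound ${\mathbb P}\{D'>x\}$ from below, up to a factor $1-\varepsilon$ and a shift $x_0$, both of which can be absorbed at the end since $a'>a$ and $\varepsilon>0$ are arbitrary and $B_r$ will be applied with argument $\frac{\rho+\delta}{\rho-k}x$ for arbitrary $\delta$. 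The mechanism is the heuristic described in the introduction: single out $s-k$ service times that are ``very large'' (of order $x\cdot\frac{b}{b-ka'}$, which for $a'$ close to $a$ is close to $x\frac{\rho}{\rho-k}$ and hence $\le \frac{\rho+\delta}{\rho-k}x$ for $a'$ near $a$), so that the $s-k$ servers handling them are effectively blocked for the relevant time window, while the remaining $k$ servers form an unstable $D/GI/k$ subsystem whose common workload grows at rate $\frac{b-ka'}{k}$ and therefore reaches level $x$ after roughly $N \approx x\frac{k}{b-ka'}$ arrivals.

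Concretely, for $\vv i = (i_1,\dots,i_{s-k})$ with $1\le i_1<\dots<i_{s-k}<n$ I would define the event $A_n(\vv i)$ that $\sigma_{i_j} > x + (n-i_j)a' + (\text{growth term})$ for each $j$, where the growth term accounts for the $\frac{b-ka'}{k}$-rate increase of the $k$ lightly-loaded servers over the intervening time; and a complementary event $C_n(\vv i)$ controlling the remaining service times, analogous to the one in the proof of Lemma \ref{l.lower.k=0}. The events $A_n(\vv i)\cap C_n(\vv i)$ are disjoint across distinct $\vv i$, and $A_n(\vv i)$ is independent of $C_n(\vv i)$; one shows ${\mathbb P}\{C_n(\vv i)\}\to1$ uniformly in $n,\vv i$ as $x\to\infty$, exactly as before, using finiteness of ${\mathbb E}\sigma$. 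It remains to show that on $A_n(\vv i)$ (for suitable $n$, e.g. $n$ slightly larger than $\max i_j + N$) one has $D'_n > x$: this is where the strong law of large numbers for the unstable $k$-server subsystem enters — I would invoke Lemma \ref{slln.for.unstable} (applied with $s$ replaced by $k$, legitimate since $\rho>k$ means $b>ka$, hence $b>ka'$ for $a'$ near $a$) together with the monotonicity Lemma \ref{monotonicity}, to guarantee that once the $s-k$ big services arrive and block their servers, the minimum workload among all $s$ lines exceeds $x$ within $N(1+o(1))$ steps with probability tending to $1$.

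Finally I would sum ${\mathbb P}\{A_n(\vv i)\}$ over $\vv i$: by independence of the $\sigma$'s this is $\sum_{\vv i}\prod_j \overline B(x + (n-i_j)a' + \cdots)$, and applying Lemma \ref{q.k} with $s$ replaced by $s-k$ and $q_i = \overline B(x + i a' + \cdots)$ bounds it below by $\frac{1}{(s-k)!}\bigl(\sum_{j\ge s-k}\overline B(x+ja'+\cdots)\bigr)^{s-k}$, and the inner sum is at least $\frac{1}{a'}\int \overline B(x+z+\cdots)\,dz$, which is $\rho\,\overline B_r$ evaluated at an argument close to $x\frac{b}{b-ka'}$. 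Letting $n\to\infty$, then $a'\downarrow a$, then $\varepsilon\downarrow 0$, and noting that the argument of $\overline B_r$ can be made $\le \frac{\rho+\delta}{\rho-k}x$, yields the claimed bound with the constant $\frac{\rho^{s-k}}{(s-k)!}$ (since $(b/a')^{s-k}\to\rho^{s-k}$). The main obstacle, and the place requiring the most care, is the deterministic step: making precise that the $s-k$ big jumps really do drive $D'_n$ above $x$, i.e. quantifying ``$N\approx x\frac{k}{b-ka'}$'' and controlling the error in the law of large numbers for the embedded unstable $k$-server queue uniformly enough that the probability of success stays bounded away from zero — this is what forces the strict inequality in the argument $\frac{\rho+\delta}{\rho-k}x$ rather than $\frac{\rho}{\rho-k}x$.
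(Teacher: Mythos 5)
Your proposal is correct and follows essentially the same route as the paper: reduction to deterministic interarrival times via Corollary \ref{cor:lower.bound.D}, the $s-k$ big-jump events $A_n(\vv i)$ (with disjointness/independence handled as in Lemma \ref{l.lower.k=0}), the strong law of large numbers for the unstable $k$-server subsystem (Lemma \ref{slln.for.unstable}) combined with monotonicity (Lemma \ref{monotonicity}) to show the big jumps drive $D_n$ above $x$ after $N\approx xk/(b-ka)$ steps, and Lemma \ref{q.k} to convert the sum over $\vv i$ into $\frac{\rho^{s-k}}{(s-k)!}\overline{B_r}^{s-k}$ at an argument close to $x\rho/(\rho-k)$. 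The step you flag as delicate is exactly the one the paper formalizes through the conditional convergence (\ref{slln.double}) and the inflated threshold $x(1+c\delta)+(n-i_j)a$, with the slack absorbed into the $\delta$ of the statement.
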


\begin{proof}
We exploit the technique of $s-k$ big jumps.
Following Corollary \ref{cor:lower.bound.D},
we consider only deterministic
interarrival times, $\tau\equiv a$.

The case $k=0$ was considered in Lemma \ref{l.lower.k=0}.
So  now let  $k\ge 1$.
Let $\widetilde{\vv W}_n=(\widetilde W_{n1},\ldots,\widetilde W_{nk})$
be the residual workload vector in the $GI/GI/k$ system
with the same interarrival and service times as in the original
system,
and with $k$ servers.
Since $\rho>k$, the $k$-server system is unstable.
Hence, by Lemma \ref{slln.for.unstable},
both the minimal coordinate $\widetilde W_{n1}$
and the maximal coordinate $\widetilde W_{nk}$
drift to infinity as $n\to\infty$ with probability 1,
with the same rate
$(b-ka)/k$. Then
\begin{eqnarray*}
{\mathbb P}\Bigl\{\widetilde W_{N1}>N\Bigl(\frac{b-ka}{k}-\delta\Bigr),
\ \widetilde W_{ik}\le N\Bigl(\frac{b-ka}{k}+\delta\Bigr)
\mbox{ for all }i\le N\Bigr\} &\to& 1
\ \mbox{ as }N\to\infty.
\end{eqnarray*}

If we assume that there are initially big workloads at $s-k$ servers
while the $k$ other queues are empty,
then, with high probability, the $k$ smallest workloads
evolve like the $k$-server system with workloads $\widetilde{\vv  W}_n$,
for a long while. This observation implies that
\begin{eqnarray*}
\lefteqn{{\mathbb P}\Bigl\{
W_{N1}>N\Bigl(\frac{b-ka}{k}-\delta\Bigr),
\ W_{ik}\le N\Bigl(\frac{b-ka}{k}+\delta\Bigr),
W_{i,k+1}>N\Bigl(\frac{b-ka}{k}+\delta\Bigr)
\mbox{ for all }i\le N\Big|}\\
&&\hspace{40mm}
W_{1k}=0, W_{1,k+1}>
N\Bigl(\frac{b-ka}{k}+\delta\Bigr)+Na\Bigr\}
\to 1\ \mbox{ as }N\to\infty.
\end{eqnarray*}
Take $c$ such that
\begin{eqnarray}\label{def.c}
\frac{b-ka}{k}+\delta
&\le& (1+c\delta)
\Bigl(\frac{b-ka}{k}-\delta\Bigr)
\end{eqnarray}
for all sufficiently small $\delta>0$, and let
\begin{eqnarray}\label{def.N}
x=N\Bigl(\frac{b-ka}{k}-\delta\Bigr).
\end{eqnarray}
Then
\begin{eqnarray*}
{\mathbb P}\{D_N>x\mid W_{1k}=0,
W_{1,k+1}>x(1+c\delta)+Na\}
&\to& 1\ \mbox{ as }x\to\infty.
\end{eqnarray*}
By the monotonicity of the $s$-server
queueing system in its initial state (see Lemma \ref{monotonicity}), we obtain
\begin{eqnarray}\label{slln.double}
{\mathbb P}\{D_N>x\mid W_{1,k+1}>x(1+c\delta)+Na\}
&\to& 1\ \mbox{ as }x\to\infty.
\end{eqnarray}

For $\vv i=(i_1,\ldots,i_{s-k})$,
$1\le i_1<\ldots<i_{s-k}\le n$, define the events
$A_n(\vv i)$ 
as
\begin{eqnarray*}
A_n(\vv i) &=& \{\sigma_{i_1}>y+(n-i_1)a, \ldots,
\sigma_{i_{s-k}}>y+(n-i_{s-k})a\}.
\end{eqnarray*}
Again like in (\ref{union.of.An}) we have
\begin{eqnarray}\label{union.An.N}
{\mathbb P}\Bigl\{\bigcup_{\vv i:i_{s-k}<n-N}
A_n(\vv i)\Bigr\}
&\ge& (1-o(1))\sum_{\vv i:i_{s-k}<n-N}
{\mathbb P}\{A_n(\vv i)\}
\end{eqnarray}
as $y\to\infty$, uniformly in $n$ and $N$.
We prove now that
\begin{eqnarray}\label{Sigma}
\lim_{n\to\infty}
\sum_{\vv i:i_{s-k}<n-N} {\mathbb P}\{A_n(\vv i)\}
&\ge& \frac{\rho^{s-k}}{(s-k)!}
\overline{B_r}^{s-k}(y+(N+s-k)a).
\end{eqnarray}
Indeed, by the independence of the $\sigma$'s,
\begin{eqnarray*}
\sum_{\vv i:i_{s-k}<n-N} {\mathbb P}\{A_n(\vv i)\}
&=& \sum_{\vv i:i_{s-k}<n-N} \overline B(y+(n-i_1)a)
\cdot\ldots\cdot\overline B(y+(n-i_{s-k})a)\\
&=& \sum_{N<i_1<\ldots<i_{s-k}\le n-1}
\overline B(y+i_1a)\cdot\ldots\cdot\overline B(y+i_{s-k}a).
\end{eqnarray*}
Hence, the left side of (\ref{Sigma}) equals
\begin{eqnarray*}
\sum_{N<i_1<\ldots<i_{s-k}}
\overline B(y+i_1a)\cdot\ldots\cdot\overline B(y+i_{s-k}a)
&\ge& \sum_{1\le i_1<\ldots<i_{s-k}}
\overline B(y+Na+i_1a)\cdot\ldots\cdot
\overline B(y+Na+i_{s-k}a).
\end{eqnarray*}
By Lemma \ref{q.k} with
$q_i=\overline B(y+Na+ia)$, the sum on the right
is not less than
\begin{eqnarray*}
\frac{1}{(s-k)!} \Bigl(\sum_{j=s-k}^\infty
\overline B(y+Na+ja)\Bigr)^{s-k}.
\end{eqnarray*}
Since the tail probability is non-increasing,
\begin{eqnarray*}
\sum_{j=s-k}^\infty\overline B(y+Na+ja)
\ge \frac{1}{a} \int_{(s-k)a}^\infty\overline B(y+Na+z)dz.
\end{eqnarray*}
Combining these expressions, we obtain the desired estimate
(\ref{Sigma}). Substituting (\ref{Sigma}) into
(\ref{union.An.N}) we get, as $y\to\infty$,
\begin{eqnarray}\label{Sigma.2}
\lim_{n\to\infty}
{\mathbb P}\Bigl\{\bigcup_{\vv i:i_{s-k}<n-N}
A_n(\vv i)\Bigr\}
&\ge& \frac{\rho^{s-k}-o(1)}{(s-k)!}
\overline{B_r}^{s-k}(y+(N+s-k)a).
\end{eqnarray}

Let $y=x(1+c\delta)$ in the definition of $A_n(\vv i)$.
Since an increment per unit of time of every
coordinate of the workload vector $\vv W_i$ is not
less than $-a$, on the event $A_n(\vv i)$
we have $W_{j,k+1}>x(1+c\delta)+(n-j)a$,
for all $j\in[i_{s-k}+1,n]$.
Consider $\vv i$ such that $i_{s-k}<n-N$.
Then $A_n(\vv i)$ implies
$W_{n-N,k+1}>x(1+c\delta)+Na$.
Together with (\ref{slln.double}) it yields
\begin{eqnarray}\label{cond.DA}
{\mathbb P}\Bigl\{D_n>x\Big|\bigcup_{\vv i:i_{s-k}<n-N}
A_n(\vv i)\Bigr\} &\to& 1
\end{eqnarray}
as $x\to\infty$, uniformly in $n$ and $N$. Now it
follows from (\ref{Sigma.2}) and (\ref{cond.DA}) that
\begin{eqnarray*}
{\mathbb P}\{D>x\}
&\ge& \liminf_{n\to\infty}
{\mathbb P}\Bigl\{W_{n1}>x\Big|
\bigcup_{\vv i:i_{s-k}<n-N}A_n(\vv i)\Bigr\}
{\mathbb P}\Bigl\{\bigcup_{\vv i:i_{s-k}<n-N}
A_n(\vv i)\Bigr\}\\
&\ge& \frac{\rho^{s-k}-o(1)}{(s-k)!}
\overline{B_r}^{s-k}(x(1+c\delta)+(N+s-k)a).
\end{eqnarray*}
Thus, by (\ref{def.N}),
\begin{eqnarray*}
{\mathbb P}\{D>x\}
&\ge& \frac{\rho^{s-k}-o(1)}{(s-k)!}
\overline{B_r}^{s-k}\Bigl(x\Bigl(
1+c\delta+\frac{ka}{b-ka-k\delta}\Bigr)+(s-k)a\Bigr).
\end{eqnarray*}
Hence, for every $\delta>0$,
\begin{eqnarray*}
{\mathbb P}\{D>x\}
&\ge& \frac{\rho^{s-k}+o(1)}{(s-k)!}
\overline{B_r}^{s-k} \Bigl(x
\Bigl(1+\frac{ka+\delta}{b-ka}\Bigr)\Bigr)\\
&=& \frac{\rho^{s-k}+o(1)}{(s-k)!}
\overline{B_r}^{s-k} \Bigl(x
\frac{b+\delta}{b-ka}\Bigr)
\ \mbox{ as }x\to\infty.
\end{eqnarray*}
The proof of Theorem \ref{th.lower} is complete.
\end{proof}

\section{New Majorant.}\label{majorant}

In the proof of the upper bound in Theorem \ref{co.s.max}
(see Section \ref{k=0}), we introduced $s$ parallel
single server queues that provide a suitable
majorant in the case $\rho<1$.
If $\rho\ge 1$ then the single server system with
service time distribution $B$ is unstable and
the above scheme does not work.
For an arbitrary $\rho$, we need a more
complex procedure to obtain a majorant.
Hereinafter let $k=[b/a]$ be the integer part of $b/a$.
We continue to assume constant interarrival times $\tau \equiv a$.

Again let $\sigma_{ni}$, $n\ge1$, $i\le s$, be
independent random variables with common distribution $B$.
Define service times $\sigma_n$ in the original
$D/GI/s$ system as in Section \ref{k=0}.
Consider again $s$ auxiliary single server queues $D/GI/1$,
but now with different deterministic arrival times
$T_n = n(k+1)(a-h)$ where
\begin{eqnarray}\label{choice.h}
\frac{k}{k+1}\Bigl(a-\frac{b}{k+1}\Bigr)<h<a-\frac{b}{k+1},
\end{eqnarray}
and with service times
equal to $\sigma_{ni}$ in queue $i=1,2,\ldots,s$. Then
$T_1=(k+1)(a-h)>b$, so that
the queues are stable.
Let $U_i$ be a stationary waiting time in
the $i$th auxiliary queue. Since, for each $n$,
\begin{eqnarray}\label{max_upper_independence}
\mbox{the sequences }
\{U_{n1},n\ge 1\},\ldots,\{U_{ns},n\ge 1\}
\mbox{ are mutually independent},
\end{eqnarray}
the limiting vector $(U_{1},\ldots,U_{s})$ consists
of independent identically distributed coordinates too.

In contrast to the case $\rho <1$,
it may not be true in general that, say, $V_{n1}$ is smaller than $U_{n1}$.
Nevertheless, for $\rho < k+1$, we can prove
that, for any set $I$ of $k+1$ indices,
$\sum_{i\in I} V_{ni}\le \sum_{i\in I}U_{ni}+\eta_I$
where $\eta_I$ has a light-tailed distribution,
this is Lemma \ref{l.sum.of.W.le.U} below.
But first we state the main result of the Section which is
an analogue of (\ref{D.min})
for the general $\rho$.

\begin{lemma}\label{l.D.le.orderU}
There exists a number $\beta>0$ and a random variable $\eta$ such that
${\mathbb E}e^{\beta\eta}<\infty$ and, for all $n$, with probability
1,
\begin{eqnarray*}
D_n &\le& U_{n,(k+1)}+\eta,
\end{eqnarray*}
where $U_{n,(k+1)}$ is the $(k+1)$th order
statistic of  vector $(U_{n1},\ldots,U_{ns})$.
\end{lemma}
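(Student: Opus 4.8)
The plan is to reduce the bound $D_n\le U_{n,(k+1)}+\eta$ to the promised auxiliary estimate on sums of $k+1$ coordinates, namely Lemma \ref{l.sum.of.W.le.U}, and then to exploit the ordered structure of the Kiefer--Wolfowitz vector. First I would recall that $D_n=W_{n1}=V_{n,i_n}$ is the \emph{smallest} coordinate of $\vv V_n$, hence for \emph{any} set $I$ of $k+1$ indices we have $(k+1)D_n\le\sum_{i\in I}V_{ni}$. Choosing $I$ to be the indices realising the $k+1$ \emph{smallest} values of the vector $(U_{n1},\ldots,U_{ns})$, the auxiliary lemma gives $\sum_{i\in I}V_{ni}\le\sum_{i\in I}U_{ni}+\eta_I=\sum_{j=1}^{k+1}U_{n,(j)}+\eta_I$. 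Therefore
\begin{eqnarray*}
(k+1)D_n &\le& \sum_{j=1}^{k+1}U_{n,(j)}+\eta_I \le (k+1)U_{n,(k+1)}+\eta_I,
\end{eqnarray*}
since each of the $k+1$ smallest order statistics is at most $U_{n,(k+1)}$. Dividing by $k+1$ yields $D_n\le U_{n,(k+1)}+\eta_I/(k+1)$, which is the desired inequality with $\eta:=\max_I\eta_I/(k+1)$, the maximum taken over the finitely many subsets $I$ of size $k+1$ of $\{1,\ldots,s\}$.

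Second, I would address the light-tailedness of $\eta$. By the auxiliary lemma each $\eta_I$ has a distribution with a finite exponential moment, say ${\mathbb E}e^{\beta_I\eta_I}<\infty$ for some $\beta_I>0$; taking $\beta=\min_I\beta_I>0$ (again a finite minimum), we get ${\mathbb E}e^{\beta\eta_I}<\infty$ for every $I$. Since $\eta\le\sum_I\eta_I/(k+1)$ (a crude bound on the maximum by the sum over the finitely many index sets), and using Hölder's inequality together with the fact that each summand has a finite $e^{\beta\,\cdot}$ moment, after possibly shrinking $\beta$ by a factor equal to the number of such sets one obtains ${\mathbb E}e^{\beta\eta}<\infty$. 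The point is simply that a finite maximum (or sum) of random variables each possessing some exponential moment again possesses an exponential moment.

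The main obstacle is not this last bookkeeping but the dependence structure: one must be careful that the index set $I$ selected above is allowed to depend on $n$ and on the realisation of the $U$'s, and that Lemma \ref{l.sum.of.W.le.U} is stated uniformly over all $(k+1)$-subsets $I$, so that picking the data-dependent $I$ is legitimate. In particular the inequality $\sum_{i\in I}V_{ni}\le\sum_{i\in I}U_{ni}+\eta_I$ must hold \emph{simultaneously} for all $I$ on a set of probability one, which is exactly what the "for any set $I$" formulation of the auxiliary lemma provides. Once that is in hand, the reduction above is immediate and the proof is complete modulo Lemma \ref{l.sum.of.W.le.U}, whose proof (the genuinely substantial part, relying on the law-of-large-numbers estimates of Section \ref{lln} and the comparison with the auxiliary $D/GI/1$ queues with inter-batch times $T_n=n(k+1)(a-h)$) is carried out separately.
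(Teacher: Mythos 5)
Your proposal is correct and follows essentially the same route as the paper: bound $(k+1)D_n$ by $\sum_{i\in I}V_{ni}$ for an arbitrary $(k+1)$-set $I$, apply Lemma \ref{l.sum.of.W.le.U}, choose $I$ to consist of the indices of the $k+1$ smallest $U_{ni}$'s, and take $\eta$ as the maximum of the finitely many $\eta_I$, which retains a finite exponential moment. The only (harmless) cosmetic difference is that you carry the factor $1/(k+1)$ on $\eta_I$ and justify the exponential moment of the maximum via a sum/H\"older bound, where the direct estimate ${\mathbb E}e^{\beta\max_I\eta_I}\le\sum_I{\mathbb E}e^{\beta\eta_I}$ would suffice.
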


Now we formulate and prove the following result.
Based on it, we give the proof of Lemma \ref{l.D.le.orderU}
at the end of the section.

\begin{lemma}\label{l.sum.of.W.le.U}
There exists $\beta>0$ such that, for any set of $k+1$
indices $I=\{i(1),\ldots,i(k+1)\}$, there is a random
variable $\eta_I$ such that
${\mathbb E}e^{\beta\eta_I}<\infty$ and, for any $n$,
with probability 1,
\begin{eqnarray*}
\sum_{i\in I} V_{ni} &\le&
\sum_{i\in I}U_{ni}+\eta_I.
\end{eqnarray*}
\end{lemma}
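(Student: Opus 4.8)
The plan is to set up a coupling between the original $D/GI/s$ system, restricted to the $k+1$ coordinates in the index set $I$, and the $k+1$ auxiliary single-server queues with service times $\{\sigma_{ni}\}_{i\in I}$ and the slow deterministic arrival epochs $T_n = n(k+1)(a-h)$. The key point is that the auxiliary queues receive a batch of $k+1$ new customers only once every $(k+1)$ units of the original time grid, whereas the original $s$-server system receives one customer per unit of time. So over a block of $k+1$ consecutive arrival instants in the original system, the partial sum $\sum_{i\in I} V_{ni}$ increases by at most the $k+1$ service times of those customers that are routed into lines indexed by $I$, minus $(k+1)a$ (one unit of $-a$ decrement per coordinate per step, as long as the coordinates stay positive), while $\sum_{i\in I} U_{ni}$ over the same block increases by exactly the sum of the $(k+1)$ fresh service times $\sigma_{\cdot,i}$, $i\in I$, minus $(k+1)(a-h)$. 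First I would make this block-to-block comparison precise: group the original time steps into blocks of length $k+1$, and inside each block bound the total input of ``$I$-work'' into the original system by the total input of work into the auxiliary queues (this uses $\rho<k+1$, which guarantees that within one block at most $k+1$ of the $s$ arriving customers can be routed to $I$-lines in a way that overloads them — more carefully, one uses that the routing to the minimal-workload line cannot push more than $k+1$ big jobs into $k+1$ specified lines over $k+1$ steps).

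Second, I would account for the two discrepancies between the systems. One is a deterministic drift discrepancy: each block the auxiliary sum decreases by an extra $(k+1)h$ relative to the original, so over $m$ blocks the auxiliary system is ahead by $m(k+1)h$; this positive surplus is what absorbs the difference and is why the $\eta_I$ can be taken light-tailed rather than merely finite. The other is a boundary/reflection discrepancy: the partial-sum identity $\sum_{i\in I}(V_{n+1,i}-V_{ni}) = (\text{input}) - (k+1)a$ only holds while all coordinates in $I$ are strictly positive; when some coordinate hits zero the reflection makes the left side larger than the raw drift would predict. I would handle this exactly as in the proof of Lemma~\ref{slln.for.s-1}: the excess contributed by reflections over any stretch of time is controlled, because whenever a coordinate is at (or near) zero the partial sum $\sum_{i\in I} V_{ni}$ is itself bounded, so one can restart the comparison from the last such instant and only incur a bounded additive loss.

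Third, I would assemble these pieces into a single random variable $\eta_I$. Writing $\mu_n$ for the last block-boundary before time $n$ at which $\sum_{i\in I}V_{\mu_n,i}$ was below some fixed threshold (the existence of infinitely many such instants is not needed — we only need a renewal-type bound), the difference $\sum_{i\in I}V_{ni} - \sum_{i\in I}U_{ni}$ is dominated by a supremum of a random walk with negative drift $-(k+1)h$ plus bounded increments with exponential moments (the increments being essentially differences of service times truncated against the block length). A standard Cramér-type argument (finiteness of ${\mathbb E}e^{\beta\sigma\wedge(\text{block length})}$ together with negative drift) then gives a $\beta>0$ with ${\mathbb E}e^{\beta\eta_I}<\infty$. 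The main obstacle is the second step: making the reflection bookkeeping rigorous when coordinates in $I$ can individually hit zero while others are large, since the Kiefer--Wolfowitz reordering $R$ mixes coordinates across the full $s$-vector and the routing index $i_n$ depends on the global minimum, not just on $I$. The trick is to work with the unordered workload vector $\vv V_n$ (not $\vv W_n$) and with a fixed label set $I$, so that the only coupling one needs is ``same service time sequence $\sigma_{ni}$, same routing,'' and then to verify that a customer routed to an $I$-line in the original system corresponds to a customer fed into the matching auxiliary queue — the block structure is exactly what makes this assignment consistent.
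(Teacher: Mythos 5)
Your second and third steps (the drift surplus created by $h$, the restart-from-a-bounded-level bookkeeping in the spirit of Lemma \ref{slln.for.s-1}, and the Cram\'er bound for the all-time maximum of a negative-drift walk to get ${\mathbb E}e^{\beta\eta_I}<\infty$) are exactly the ingredients the paper uses. But your first step contains a genuine gap, and it is the step on which everything else rests. The claimed per-block domination of inputs is not an almost-sure inequality: within a block of $k+1$ original arrivals, up to $k+1$ customers may be routed to $I$-lines, and the work they bring is $\sigma_{m,i_m}$ for several distinct indices $m$; these random variables are generally \emph{not} among the single batch of fresh service times you credit to the auxiliary queues in that block, so one of them can be huge while the batch is small and the pathwise bound fails. (There is also a misreading of the construction: in the paper each auxiliary queue is fed one customer $\sigma_{ni}$ per original index $n$, with $(k+1)(a-h)$ entering only as the decrement in its recursion; the lemma compares $\sum_{i\in I}V_{ni}$ and $\sum_{i\in I}U_{ni}$ at the \emph{same} index $n$, not across a $k+1$-to-$1$ time alignment.) The argument that $\rho<k+1$ prevents ``too much'' work from being pushed into the $I$-lines is a statement about drifts, i.e.\ about means, and cannot by itself yield the required a.s.\ inequality.

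The missing device — which you flag as ``the main obstacle'' but do not resolve — is the paper's intermediate $(k+1)$-server system $\vv V'_n$: fix $i'\in I$, and at each step let the $V'$-system receive service time $\sigma_{n,i_n}$ if $i_n\in I$ and $\sigma_{ni'}$ if $i_n\notin I$. By the monotonicity of the Kiefer--Wolfowitz dynamics in the service times (Lemma \ref{monotonicity}), $\sum_{i\in I}V_{ni}\le\sum_{i\in I}V'_{ni}$ pathwise, and — this is the point — the input to $V'$ at every step is one member of the very batch $\{\sigma_{ni}\colon i\in I\}$ that feeds the $U$-queues at that step. This makes the difference of the two sums a process whose per-step increment is bounded by $-\zeta_{I,n}$ plus reflection terms, where $\zeta_{I,n}$ has positive mean precisely by the left inequality in (\ref{choice.h}); the reflection is then controlled through the threshold-$A$, $d$-skeleton argument of Lemma \ref{slln.for.s-1} applied to the genuine $(k+1)$-server system $V'$, and the exponential moment follows from the maximum of a negative-drift walk with increments bounded above. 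Without this construction (or an equivalent pathwise coupling that matches, step by step, the work entering the $I$-lines with specific members of the batch feeding the $U$'s), your comparison only holds ``in drift'' and the a.s.\ inequality of the lemma is not established.
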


\begin{proof}
Fix some $i'\in I$.
Consider an auxiliary $GI/GI/k+1$ system
$\vv V'_n=(V'_{ni},i\in I)$ with the same interarrival
times equal to $\tau_n$, but whose service times $\sigma'_n$
are chosen in a special manner.
At any time $n$, if $i_n\in I$ ($i_n$ is defined in the proof
of Theorem \ref{co.s.max}, see Section \ref{k=0})
then put $\sigma'_n=\sigma_{n,i_n}$ and $i'_n=i_n$.
If $i_n\not\in I$ then put $\sigma'_n=\sigma_{ni'}$
and $i'_n=i'$.
Applying property (1) of Lemma \ref{monotonicity}, we
get that
$R(V_{ni},i\in I)\le R\vv V_n'$
coordinatewise, for any $n$. Therefore,
\begin{eqnarray*}
\sum_{i\in I} V_{ni}
&\le& \sum_{i\in I} V'_{ni}.
\end{eqnarray*}
Hence, it suffices to prove that
\begin{eqnarray}\label{reform.V}
\sum_{i\in I} V'_{ni} &\le&
\sum_{i\in I} U_{ni}+\eta_I.
\end{eqnarray}

For every $i\in I$ and for any $n$,
\begin{eqnarray*}
U_{n+1,i}-U_{ni}
&\ge& \sigma_{ni}-(k+1)(a-h),
\end{eqnarray*}
and hence
\begin{eqnarray}\label{increment.of.U}
\sum_{i\in I}U_{n+1,i}-\sum_{i\in I}U_{ni}
&\ge& \sigma_n'-(k+1)(a-h)+
\sum_{i\in I,i\not=i'_n}(\sigma_{in}-(k+1)(a-h))\nonumber\\
&=& \sigma_n'-(k+1)a+\zeta_{I,n},
\end{eqnarray}
where the independent identically distributed
random variables
$$
\zeta_{I,n}:=\sum_{i\in I,i\not=i'_n}(\sigma_{in}-b)
+(k+1)^2h+k(b-(k+1)a)
$$
have a positive mean, by the left inequality
in (\ref{choice.h}).

Take $\varepsilon\in(0,{\mathbb E}\zeta_{I,n})$.
Let $d$ and $A$ be defined by Lemma \ref{slln.for.s-1}
applied to the system $(V'_{1+n,i},i\in I)$.
Consider $d$-skeleton of $\vv V'$, that is,
the sequence $\vv V'_{1+nd}$.
For every $n$, if the maximal coordinate of
$(V'_{1+nd,i},i\in I)$ is not bigger than $A$,
then
\begin{eqnarray*}
\sum_{i\in I}V'_{1+nd+d,i}-\sum_{i\in I}V'_{1+nd,i}
&\le& \sigma'_{nd+1}+\ldots+\sigma'_{nd+d},
\end{eqnarray*}
which together with (\ref{increment.of.U}) implies that
\begin{eqnarray*}
\sum_{i\in I}V'_{1+nd+d,i}-\sum_{i\in I}V'_{1+nd,i}
&\le& \sum_{i\in I}U_{1+nd+d,i}-\sum_{i\in I}U_{1+nd,i}
+d(k+1)a-\zeta_{I,1+nd}-\ldots-\zeta_{I,nd+d}\\
&\le& \sum_{i\in I}U_{1+nd+d,i}-\sum_{i\in I}U_{1+nd,i}
+d((k+1)a+b),
\end{eqnarray*}
since $\zeta_{I,n}\ge -b$. To conclude, if
the maximal coordinate of $(V'_{1+nd,i},i\in I)$
is not bigger than $A$, then
\begin{eqnarray*}
\sum_{i\in I}V'_{1+nd,i} &\le& (k+1)A,
\end{eqnarray*}
so that
\begin{eqnarray}\label{increment.of.Vprime.1}
\sum_{i\in I}V'_{1+nd+d,i}
&\le& \sum_{i\in I}U_{1+nd+d,i}+d((k+1)a+b)+(k+1)A\nonumber\\
&:=& \sum_{i\in I}U_{1+nd+d,i}+C,
\end{eqnarray}

Further, if the maximal coordinate of
$(V'_{1+nd,i},i\in I)$ is bigger than $A$,
then, by Lemma \ref{slln.for.s-1}, we have
\begin{eqnarray*}
\sum_{i\in I}V'_{1+nd+d,i}-\sum_{i\in I}V'_{1+nd,i}
&\le& \theta_{I,1+nd},
\end{eqnarray*}
where $\theta_{I,1+nd}$ are independent identically
distributed random variables with mean
${\mathbb E}\theta_{I,1+nd}\le d(b-(k+1)a+\varepsilon)$
and such that
\begin{eqnarray*}
\theta_{I,1+nd} &\le& \sigma'_{nd+1}+\ldots+\sigma'_{nd+d}.
\end{eqnarray*}
In this case, by (\ref{increment.of.U}),
\begin{eqnarray}\label{increment.of.Vprime.2}
\sum_{i\in I}V'_{1+nd+d,i}-\sum_{i\in I}V'_{1+nd,i}
&\le& \sum_{i\in I}U_{1+nd+d,i}-\sum_{i\in I}U_{1+nd,i}
+\theta_{I,1+nd}-(\sigma'_{nd}+\ldots+\sigma'_{nd+d})\nonumber\\
&&\hspace{20mm}+d(k+1)a-\zeta_{I,1+nd}-\ldots-\zeta_{I,nd+d}
\nonumber\\
&:=& \sum_{i\in I}U_{1+nd+d,i}-\sum_{i\in I}U_{1+nd,i}
+\widetilde\theta_{I,1+nd}.
\end{eqnarray}

Inequalities (\ref{increment.of.Vprime.1}) and
(\ref{increment.of.Vprime.2}) imply that always
\begin{eqnarray}\label{diff.rough}
\sum_{i\in I} V'_{1+nd+d,i}
&\le& \sum_{i\in I} U_{1+nd+d,i}+C
+\max_n\sum_{j=0}^n\widetilde\theta_{I,1+jd}.
\end{eqnarray}
Here $\widetilde{\theta}_{I,1+nd}, n=1,2,\ldots$ are
independent identically distributed random variables that
are bounded from above and have a negative mean. Indeed,
\begin{eqnarray*}
\widetilde\theta_{I,1+nd}
&=& \theta_{I,1+nd}-(\sigma'_{nd+1}+\ldots+\sigma'_{nd+d})
+d(k+1)a-\zeta_{I,1+nd}-\ldots-\zeta_{I,nd+d}\\
&\le& d(k+1)a-\zeta_{I,1+nd}-\ldots-\zeta_{I,nd+d}\\
&\le& d((k+1)a+b)
\end{eqnarray*}
and
\begin{eqnarray*}
{\mathbb E}\widetilde\theta_{I,1+nd}
&=& {\mathbb E}\theta_{I,1+nd}-bd
+d(k+1)a-d{\mathbb E}\zeta_{I,1+nd}\\
&\le& d(b-(k+1)a+\varepsilon)-bd+d(k+1)a-d{\mathbb E}\zeta_{I,1+nd}\\
&=& d(\varepsilon-{\mathbb E}\zeta_{I,1+nd})<0,
\end{eqnarray*}
by the choice of $\varepsilon$.
Therefore, there exists $\beta>0$ such that
${\mathbb E}e^{\beta\widetilde\theta_{I,1+nd}}<1$
and then the following estimate holds:
$$
{\mathbb E}\exp\Bigl\{\beta
\max_n\sum_{j=0}^n\widetilde\theta_{I,1+jd}\Bigr\}
\le \sum_n
({\mathbb E}e^{\beta\widetilde\theta_{I,1+nd}})^n
<\infty.
$$
Let
$$
\widetilde\eta_I:=C+\max_n\sum_{j=0}^n\widetilde\theta_{I,1+jd},
$$
then ${\mathbb E}e^{\beta\widetilde\eta_I}<\infty$ and,
by the upper bound (\ref{diff.rough}), for all $n$,
\begin{eqnarray}\label{V.U.int}
\sum_{i\in I} V'_{1+nd+d,i}
&\le& \sum_{i\in I} U_{1+nd+d,i}+\widetilde\eta_I.
\end{eqnarray}
Since, in addition, for every $l\in[1,d]$,
$$
\sum_{i\in I} V'_{1+nd+d+l,i}-\sum_{i\in I} V'_{1+nd+d+l-1,i}
\le \sum_{i\in I} U_{1+nd+d+l,i}-\sum_{i\in I} U_{1+nd+d+l-1,i}+(k+1)a,
$$
we conclude from (\ref{V.U.int}) that, for every $l\in[1,d]$,
\begin{eqnarray*}
\sum_{i\in I} V'_{1+nd+d+l,i}
&\le& \sum_{i\in I} U_{1+nd+d+l,i}+\widetilde\eta_I+l(k+1)a,
\end{eqnarray*}
so that, for every $n$,
\begin{eqnarray*}
\sum_{i\in I} V'_{ni}
&\le& \sum_{i\in I} U_{ni}+\widetilde\eta_I+d(k+1)a,
\end{eqnarray*}
which completes the proof of Lemma \ref{l.sum.of.W.le.U}
with $\eta_I:=\widetilde\eta_I+d(k+1)a$.
\end{proof}

\begin{proofof}{Lemma \ref{l.D.le.orderU}}
For every collection $I$ of $k+1$ coordinates
\begin{eqnarray*}
D_n &\le& \frac{1}{k+1}\sum_{i\in I} V_{ni},
\end{eqnarray*}
since $D_n$ is the minimal coordinate.
Then it follows from Lemma \ref{l.sum.of.W.le.U} that
\begin{eqnarray}\label{dn.le.sum}
D_n &\le& \frac{1}{k+1}\sum_{i\in I} U_{ni}+\eta_I.
\end{eqnarray}
Take $\eta:=\displaystyle\max_{I:|I|=k+1}\eta_I$.
Then ${\mathbb E}e^{\beta\eta}<\infty$ and
\begin{eqnarray}\label{dn.le.sum1}
D_n &\le& \frac{1}{k+1}\sum_{i\in I} U_{ni}+\eta.
\end{eqnarray}
Take $I$ such that $\{U_{ni},i\in I\}$
are the $k+1$ smallest coordinates of vector $(U_{n1},\ldots,U_{ns})$.
Then $U_{ni}\le U_{n,(k+1)}$ for every $i\in I$.
Together with (\ref{dn.le.sum1})
it yields the inequality of the lemma.
\end{proofof}

\section{Upper Bound and the Principle of $s-k$ Big Jumps.}\label{sec.upper.bound}

Now we turn to the upper bound.
Lemma \ref{l.D.le.orderU}
allows to prove the following general result.

\begin{theorem}\label{th.deter.upper}
Let $\rho<k+1$ for some $k\in\{0,1,\ldots,s-1\}$.
Then, for any fixed $h$ satisfying {\rm(\ref{choice.h})},
there exists $\beta>0$ such that
\begin{eqnarray*}
{\mathbb P}\{D>x+y\}
&\le& \Big(\begin{array}{c}s\\k\end{array}\Big)
(\overline F(x))^{s-k}+const\cdot e^{-\beta y}
\ \mbox{ for all }x,y>0,
\end{eqnarray*}
where $F$ is the distribution of random variable
$$
M:=\sup\Bigl(0,\ \sum_{j=1}^n
(\sigma_j-(k+1)(a-h)),\ n\ge 1\Bigr).
$$
\end{theorem}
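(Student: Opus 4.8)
The plan is to first reduce to deterministic interarrival times $\tau\equiv a$, exactly as in the proof of the upper bound in Theorem \ref{co.s.max}: the general case follows from \cite[Lemma 1]{FK}. So assume $\tau_n\equiv a$ throughout, and work with the coupling of the original $D/GI/s$ system with the $s$ parallel auxiliary $D/GI/1$ queues with inter-arrival constant $(k+1)(a-h)$ and service times $\sigma_{ni}$, as set up in Section \ref{majorant}. By Lemma \ref{l.D.le.orderU} there is $\beta_0>0$ and a random variable $\eta$ with ${\mathbb E}e^{\beta_0\eta}<\infty$ such that $D_n\le U_{n,(k+1)}+\eta$ for all $n$, where $U_{n,(k+1)}$ is the $(k+1)$th order statistic of $(U_{n1},\dots,U_{ns})$. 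Passing to the stationary regime, $D\le_{\rm st} U_{(k+1)}+\eta$ where $(U_1,\dots,U_s)$ are i.i.d. stationary waiting times of the single auxiliary $D/GI/1$ queue and $\eta$ is independent of them (the $U_{ni}$ depend only on the $\sigma_{ni}$, while $\eta$ is built from the $\widetilde\theta$'s which are functions of the same $\sigma$'s — so one must be a little careful: the bound $D\le \frac1{k+1}\sum_{i\in I}U_{ni}+\eta$ holds a.s., and we just use the union-bound step below directly on this a.s. inequality rather than claiming independence).

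Next, the key elementary step: for the order statistic, $\{U_{(k+1)}>t\}$ means that at least $s-k$ of the $s$ i.i.d. coordinates exceed $t$, hence by a union bound over the $\binom{s}{s-k}=\binom{s}{k}$ choices of which $s-k$ coordinates are large,
\begin{eqnarray*}
{\mathbb P}\{U_{(k+1)}>t\} &\le& \binom{s}{k}\bigl({\mathbb P}\{U_1>t\}\bigr)^{s-k}.
\end{eqnarray*}
Now I identify the tail of $U_1$. The single auxiliary queue is a stable $D/GI/1$ queue with service time $\sigma$ and inter-arrival $(k+1)(a-h)$; by Lindley's recursion its stationary waiting time $U_1$ equals in distribution $\sup_{n\ge0}\sum_{j=1}^n(\sigma_j-(k+1)(a-h))=M$, which is precisely the random variable $M$ in the statement with distribution $F$. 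Hence ${\mathbb P}\{U_1>t\}=\overline F(t)$ and
\begin{eqnarray*}
{\mathbb P}\{U_{(k+1)}>t\} &\le& \binom{s}{k}(\overline F(t))^{s-k}.
\end{eqnarray*}

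Finally, split the deviation $D>x+y$ using $D\le U_{(k+1)}+\eta$ a.s.: either $U_{(k+1)}>x$ or $\eta>y$, so
\begin{eqnarray*}
{\mathbb P}\{D>x+y\}
&\le& {\mathbb P}\{U_{(k+1)}>x\}+{\mathbb P}\{\eta>y\}
\ \le\ \binom{s}{k}(\overline F(x))^{s-k}+{\mathbb E}e^{\beta_0\eta}\cdot e^{-\beta_0 y},
\end{eqnarray*}
by Markov's (Chernoff's) inequality applied to $\eta$, which gives the claimed bound with $\beta=\beta_0$ and $const={\mathbb E}e^{\beta_0\eta}<\infty$. The main obstacle — and the only place where real work is hidden — is the a.s. inequality $D\le U_{(k+1)}+\eta$ with $\eta$ light-tailed; but that is exactly the content of Lemma \ref{l.D.le.orderU}, already proved via Lemma \ref{l.sum.of.W.le.U} in Section \ref{majorant}, so here it can be invoked directly. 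One minor subtlety to address carefully is that the $\eta$ in Lemma \ref{l.D.le.orderU} is not independent of the $U_{ni}$, so I avoid any independence argument and split the event $\{D>x+y\}$ purely through the deterministic (a.s.) inequality $D\le U_{(k+1)}+\eta$, which needs no independence at all.
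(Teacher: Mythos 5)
Your proposal is correct and follows essentially the same route as the paper: invoke Lemma \ref{l.D.le.orderU}, split $\{D>x+y\}$ into $\{U_{(k+1)}>x\}\cup\{\eta>y\}$ (so no independence between $\eta$ and the $U$'s is needed), bound ${\mathbb P}\{U_{(k+1)}>x\}$ by $\binom{s}{k}(\overline F(x))^{s-k}$ using the mutual independence of the coordinates in (\ref{max_upper_independence}) and the random-walk-maximum identification $U_1=_{\rm st}M$, and finish with Chernoff's bound on $\eta$. The only cosmetic difference is your initial reduction via \cite[Lemma 1]{FK}, which the paper does not need here since the theorem is proved under the standing assumption $\tau\equiv a$ of Section \ref{majorant}.
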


\begin{proof}
First, by inequality
$b\equiv{\mathbb E}\sigma<(k+1)(a-h)$
and by the strong law of large numbers,
the maximum $M$ is finite with probability 1.
By Lemma \ref{l.D.le.orderU},
\begin{eqnarray*}
{\mathbb P}\{D_n>x+y\}
&\le& {\mathbb P}\{U_{n,(k+1)}+\eta>x+y\}\\
&\le& {\mathbb P}\{U_{n,(k+1)}>x\}+
{\mathbb P}\{\eta>y\}.
\end{eqnarray*}
Taking into account the independence of the $U$'s in
(\ref{max_upper_independence}), we obtain
\begin{eqnarray*}
{\mathbb P}\{D_n>x+y\}
&\le& \Big(\begin{array}{c}s\\k\end{array}\Big)
{\mathbb P}^{s-k}\{U_{n1}>x\}+
{\mathbb P}\{\eta>y\}.
\end{eqnarray*}
Letting $n\to\infty$ and taking into account
the duality between the single server system
and the maximum of the corresponding random walk,
we arrive at the following inequality:
\begin{eqnarray}\label{Dprime}
{\mathbb P}\{D>x+y\}
&\le& \Big(\begin{array}{c}s\\k\end{array}\Big)
(\overline F(x))^{s-k}+
{\mathbb P}\{\eta>y\}.
\end{eqnarray}
Since $\eta$ has a finite exponential
moment, we obtain the statement of the theorem.
\end{proof}

\begin{proofof}{the upper bound in Theorem \ref{th:subexp.bounds}}
It follows from Theorem \ref{th.deter.upper} that
\begin{eqnarray*}
{\mathbb P}\{D>x\}
&\le& \Big(\begin{array}{c}s\\k\end{array}\Big)
\overline F^{s-k}(x(1-\delta))
+const\cdot e^{-\beta\delta x}.
\end{eqnarray*}
Due to the subexponentiality of $B_r$ we obtain
from the analogue of (\ref{W.single}) for the maximum of
a random walk (see, e.g., \cite[Theorem 5.2]{FKZ})
that, as $x\to\infty$,
$$
\overline F(x)\sim\frac{b}{(k+1)(a-h)-b}\overline B_r(x).
$$
Taking $h$ in (\ref{choice.h}) close to its minimal value,
say $h=\frac{k}{k+1}\Bigl(a-\frac{b}{k+1}\Big)+\varepsilon$,
$\varepsilon>0$, we arrive at the following estimate:
$$
\overline F(x)\sim\frac{b}{a-b/(k+1)-(k+1)\varepsilon}\overline B_r(x)
=\frac{(k+1)\rho}{k+1-\rho-(k+1)^2\varepsilon/a}\overline B_r(x).
$$
In addition,
$\overline B_r(x(1-\delta))\cdot e^{\beta\delta x}
\to \infty$ as $x\to\infty$. All these facts and
arbitrarity of choice of $\varepsilon>0$ imply
the desired bound.
\end{proofof}

It what follows, for two families of events $A_x$ and $B_x$ of positive probabilities
indexed by $x$, we write $A_x \sim B_x$  if ${\mathbb P} \{A_x\setminus B_x\} = o({\mathbb P}\{A_x\})$
and ${\mathbb P} \{B_x\setminus A_x\} = o({\mathbb P}\{A_x\})$ as $x\to\infty$. Note that
$A_x\sim B_x$ implies ${\mathbb P}\{A_x\}\sim {\mathbb P}\{B_x\}$, but not vice versa.

We establish now the principle of $s-k$ big jumps in the case of
intermediate
regularly varying
distributions.
For simplicity, we do it again for $D/GI/s$ system with deterministic
inter-arrival times.
For this, we consider the representation of the stationary workload
in the backward time (the so-called ``Loynes scheme'').
We again use the joint representation of $s$ individual queues and of
the $s$-server system given in the previous section and assume that
all queues run for a long time, from time $-\infty$, and that
$U_i$ is the stationary waiting time of the customer that arrives at the $i$th
queue at time $0$. Then
$$
U_i = \sup (0, \xi_{-1,i}, \xi_{-1,i}+\xi_{-2,i}, \ldots ,
\xi_{-1,i}+\ldots +\xi_{-n,i}, \ldots )
$$
where $\xi_{j,i} =\sigma_{j,i}-\widehat{a}$, for $i=1,\ldots,s$ and $j=-1,-2,\ldots$,
and $\widehat{a}=(k+1)(a-h)$.
Further, ${\vv W}_n$ are stationary vectors for $n\le 0$, and
$$
{\vv W}_{n+1}= R((W_{n1}+\sigma_n-a)^+, (W_{n2}-a)^+,\ldots,(W_{n,s}-a)^+),
$$
for all $n<0$. Here again $i_n = \min \{i : \ V_{ni}=D_n \}$ and $\sigma_n=\sigma_{n,i_n}$.
Then, for any $x>0$, by Lemma \ref{l.D.le.orderU},
$$
\{D_0 >x\} \subset \bigcup_J \{ \min_{i\in J} U_i + \eta >x \}
$$
where $D_0$ is the stationary waiting time in the $D/GI/s$ queue,
i.e. the minimal coordinate of vector ${\vv W}_0$,
$J$'s are subsets of $\{1,2,\ldots,s\}$ of cardinality $s-k$, and $\eta$ is
a random variable with light-tailed distribution. Then
$$
\{D_0 > x\} = \bigcup_J \{D_0>x, \min_{i\in J} U_i + \eta >x \}.
$$

Assume that the
residual
distribution function $B_r$ of service times is
intermediate
regularly varying
(for that, it is sufficient for $B$ to be intermediate varying).
Then, clearly, each random variable $U_i$ has an intermediate regularly varying
distribution since ${\mathbb P} (U_i>x) \sim c \overline{B}_r(x)$.
Since
the random variables $U_i$ are mutually independent, the distribution
of $\min_{i\in J} U_i$ is also intermediate regularly varying,
${\mathbb P} (\min U_i >x) \sim c^{s-k} \left(\overline{B}_r(x)
\right)^{s-k}$.

It is well-known, see e.g. \cite[Ch 5]{FKZ}, that
$$
\{ U_i>x \} \sim \bigcup_{n\ge 1} \{\sigma_{-n,i}>x+n\widehat{a} \}
$$
and therefore, for any set $J \subset \{1,2,\ldots,s\}$,
$$
\{\min_{i\in J} U_i >x\} \sim  \bigcap_{i\in J} \left(\bigcup_{n>0}
\{\sigma_{-n,i}>x+n\widehat{a} \}\right).
$$

We use the following property of intermediate regularly varying distributions
(its proof is postponed until the end of the section;
a similar result for equivalence of probabilities
may be found in \cite{AAK}):

\begin{lemma}\label{irv3}
If $X$ and $Y$ are two random variables such that $X$
has an intermediate regularly varying distribution and
${\mathbb P}\{|Y|>x\}=o({\mathbb P}\{X>x\})$ as
$x\to\infty$, then $\{X+Y>x\} \sim \{X>x\}$,
for any joint distribution of $X$ and $Y$.
\end{lemma}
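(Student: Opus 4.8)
The plan is to prove Lemma~\ref{irv3} directly from the definition of intermediate regular variation, splitting the deviation $Y$ into the events $\{Y\ge -\varepsilon x\}$ and $\{Y<-\varepsilon x\}$ and controlling each piece. First I would fix $\varepsilon>0$ and write, for the ``$\sim$'' relation, the two inclusions that need estimating: $\{X+Y>x\}\setminus\{X>x\}$ and $\{X>x\}\setminus\{X+Y>x\}$. On $\{X+Y>x\}\setminus\{X>x\}$ we have $X\le x<X+Y$, hence $Y>0$ and $X>x-Y$; subdividing according to whether $0<Y\le\varepsilon x$ or $Y>\varepsilon x$ gives
\begin{eqnarray*}
{\mathbb P}\{X+Y>x,\ X\le x\}
&\le& {\mathbb P}\{X>x-\varepsilon x\}-{\mathbb P}\{X>x\}+{\mathbb P}\{Y>\varepsilon x\}\\
&\le& {\mathbb P}\{X>x(1-\varepsilon)\}-{\mathbb P}\{X>x\}+o({\mathbb P}\{X>x\}),
\end{eqnarray*}
using ${\mathbb P}\{|Y|>x\}=o({\mathbb P}\{X>x\})$ together with $\overline{F_X}(\varepsilon x)\le\overline{F_X}(x)$ for $x$ large (tails are non-increasing, $\varepsilon<1$). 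Symmetrically, on $\{X>x\}\setminus\{X+Y>x\}$ we have $X>x\ge X+Y$, so $Y<0$ and $X\le x-Y$; splitting on $\{-\varepsilon x\le Y<0\}$ versus $\{Y<-\varepsilon x\}$ yields
\begin{eqnarray*}
{\mathbb P}\{X>x,\ X+Y\le x\}
&\le& {\mathbb P}\{x<X\le x(1+\varepsilon)\}+{\mathbb P}\{Y<-\varepsilon x\}\\
&=& {\mathbb P}\{X>x\}-{\mathbb P}\{X>x(1+\varepsilon)\}+o({\mathbb P}\{X>x\}).
\end{eqnarray*}

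Next I would divide both estimates by ${\mathbb P}\{X>x\}$ and take $\limsup_{x\to\infty}$. For the first inclusion the relevant quantity is
$\limsup_{x\to\infty}\bigl(\overline{F_X}(x(1-\varepsilon))/\overline{F_X}(x)-1\bigr)$, and for the second it is
$\limsup_{x\to\infty}\bigl(1-\overline{F_X}(x(1+\varepsilon))/\overline{F_X}(x)\bigr)$. Here is where intermediate regular variation enters: the definition gives
$\lim_{\varepsilon\downarrow0}\liminf_{x\to\infty}\overline{F_X}(x(1+\varepsilon))/\overline{F_X}(x)=1$, and applying this with $1-\varepsilon$ in place of $1+\varepsilon$ (equivalently replacing $x$ by $x/(1-\varepsilon)$ and using that ${\cal IRV}\subset{\cal L}$, so the ratio is bounded and one can pass between $1\pm\varepsilon$ forms) shows that both $\limsup$'s can be made arbitrarily small by choosing $\varepsilon$ small. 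Concretely, $\overline{F_X}(x(1-\varepsilon))/\overline{F_X}(x)=1/\bigl(\overline{F_X}(y)/\overline{F_X}(y(1+\varepsilon'))\bigr)$ with $y=x(1-\varepsilon)$ and $1+\varepsilon'=1/(1-\varepsilon)$, so $\limsup_x \overline{F_X}(x(1-\varepsilon))/\overline{F_X}(x)=1/\liminf_y\bigl(\overline{F_X}(y(1+\varepsilon'))/\overline{F_X}(y)\bigr)\to1$ as $\varepsilon\downarrow0$. Combining, for every $\varepsilon>0$ we get
$\limsup_{x\to\infty}{\mathbb P}\{X+Y>x,\,X\le x\}/{\mathbb P}\{X>x\}\le \psi(\varepsilon)$ and the analogous bound for the other difference, where $\psi(\varepsilon)\to0$ as $\varepsilon\downarrow0$; since $\varepsilon$ is arbitrary, both ratios tend to $0$, which is exactly $\{X+Y>x\}\sim\{X>x\}$.

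One more small point must be checked before the division step: that ${\mathbb P}\{X+Y>x\}>0$ for all large $x$, so the events $A_x=\{X+Y>x\}$ have positive probability as required by the definition of ``$\sim$'' for families of events. This follows because ${\mathbb P}\{X+Y>x\}\ge{\mathbb P}\{X>x(1+\varepsilon)\}-{\mathbb P}\{Y<-\varepsilon x\}\ge{\mathbb P}\{X>x(1+\varepsilon)\}-o({\mathbb P}\{X>x\})$, and intermediate regular variation together with the unbounded support of $X$ forces ${\mathbb P}\{X>x(1+\varepsilon)\}\ge\tfrac12{\mathbb P}\{X>x\}>0$ for $\varepsilon$ small and $x$ large. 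I expect the main obstacle to be purely bookkeeping: handling the $1-\varepsilon$ versus $1+\varepsilon$ conversion cleanly (the definition is stated with $1+\varepsilon$ only) and making sure the $o(\cdot)$ terms are genuinely uniform — but both are straightforward once one invokes ${\cal IRV}\subset{\cal L}$ to know the ratio $\overline{F_X}(x(1-\varepsilon))/\overline{F_X}(x)$ stays bounded. No delicate estimate is needed; the content is entirely in the defining limit of ${\cal IRV}$.
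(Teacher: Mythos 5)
Your argument is correct in substance and establishes the event-level equivalence in the sense the paper defines it (both set differences are shown to be $o({\mathbb P}\{X>x\})$, hence also $o({\mathbb P}\{X+Y>x\})$ once the two probabilities are comparable), but it takes a somewhat different route from the paper's proof. You estimate the differences $\{X+Y>x\}\setminus\{X>x\}$ and $\{X>x\}\setminus\{X+Y>x\}$ directly, using proportional truncation of $Y$ at $\pm\varepsilon x$, the annuli $\{x(1-\varepsilon)<X\le x\}$ and $\{x<X\le x(1+\varepsilon)\}$, and a $\limsup$ in $x$ followed by $\varepsilon\downarrow0$. The paper instead first chooses, by a diagonal argument, a single sublinear level $h(x)\to\infty$, $h(x)=o(x)$, with ${\mathbb P}\{|Y|>h(x)\}=o({\mathbb P}\{X>x\})$, invokes Theorem 2.47 of \cite{FKZ} to get $\{X>x\pm h(x)\}\sim\{X>x\}$, and then sandwiches $\{X+Y>x\}$ between $\{X>x+h(x),\,Y\ge-h(x)\}$ and $\{X>x-h(x)\}\cup\{Y>h(x)\}$. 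Your version is self-contained (no external insensitivity theorem and no diagonal selection of $h$), at the cost of slightly more bookkeeping; both proofs rest on exactly the same insensitivity property of ${\cal IRV}$ tails.

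One justification in your write-up is wrong as stated, although the fact you need is true: monotonicity gives $\overline{F}_X(\varepsilon x)\ge\overline{F}_X(x)$ for $\varepsilon<1$, not $\le$. What you actually need, to turn ${\mathbb P}\{Y>\varepsilon x\}\le{\mathbb P}\{|Y|>\varepsilon x\}=o(\overline{F}_X(\varepsilon x))$ into $o(\overline{F}_X(x))$, is $\overline{F}_X(\varepsilon x)\le C_\varepsilon\,\overline{F}_X(x)$, i.e.\ dominated variation. This follows from ${\cal IRV}\subset{\cal D}$ (see (\ref{relations})), or by iterating your own observation that $\liminf_y\overline{F}_X(y(1+\varepsilon'))/\overline{F}_X(y)>0$ for small $\varepsilon'$ — but not from ${\cal IRV}\subset{\cal L}$, as you suggest later, since long-tailedness controls additive shifts only, not multiplicative scaling. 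The paper uses the same fact implicitly when it asserts ${\mathbb P}\{|Y|>\varepsilon x\}=o({\mathbb P}\{X>x\})$ for every $\varepsilon>0$, so once this justification is corrected your proof stands; the remaining steps (the substitution giving $\limsup_x\overline{F}_X(x(1-\varepsilon))/\overline{F}_X(x)=1/\liminf_y\bigl(\overline{F}_X(y(1+\varepsilon'))/\overline{F}_X(y)\bigr)$, and the positivity of ${\mathbb P}\{X+Y>x\}$) are fine.
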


Applying Lemma \ref{irv3} with $X=\min_{i\in J} U_i$
and $Y=\eta$, for all $J$ of cardinality $s-k$, we get
\begin{eqnarray*}
\{D_0>x\}
&\sim &
\bigcup_J \{D_0>x, \min_{i\in J} U_i >x\}\\
&\sim &
\bigcup_J  \bigcap_{i\in J} \left(\bigcup_{n>0}
\{D_0>x, \sigma_{-n,i}>x+n\widehat{a} \}\right),
\end{eqnarray*}
since the upper and the lower bounds for
${\mathbb P}\{D_0>x\}$ are of the same order,
see \cite[Theorem 7]{BaF} or \cite{FK} for further arguments.
Now represent any event on the right in the latter equation as a union
of two events
$$
\{D_0>x,  \sigma_{-n,i}>x+n\widehat{a} , i_{-n}=i\} \cup
\{D_0>x,  \sigma_{-n,i}>x+n\widehat{a} , i_{-n}\neq i\}
$$
where
\begin{eqnarray*}
{\mathbb P}\{D_0>x, \sigma_{-n,i}>x+n\widehat{a}, i_{-n}\neq i\}
&=& {\mathbb P}\{D_0>x, i_{-n}\neq i\} {\mathbb P}\{\sigma_{-n,i}>x+n\widehat{a}\}\\
&\le& {\mathbb P}\{D_0>x\}{\mathbb P}\{\sigma_{-n,i}>x+n\widehat{a}\}.
\end{eqnarray*}
So, for any set $J$, the union of events
$$
\bigcap_{i\in J} \left(\bigcup_{n>0}
\{D_0>x, \sigma_{-n,i}>x+n\widehat{a}, i\neq i_{-n}\}\right)
$$
has probability $O({\mathbb P} \{D_0>x\} \overline{B}_r (x))
= o({\mathbb P} \{D_0>x\})$.
Since there is only a finite number of sets $J$,
we obtain the following result.

\begin{theorem}\label{bigjumps}
Assume that $\rho \in (k,k+1)$ and that the distribution of service
times
is intermediate regularly varying.
As $x\to\infty$,
\begin{equation}\label{bigj1}
{\mathbb P} \{D_0>x\} \sim {\mathbb P} \Bigl\{ D_0>x,
\bigcup_{0<n_1<n_2<\ldots <n_{s-k}}
\bigcap_{j=1}^{s-k} \{ \sigma_{-n_j}> x+n_j \widehat{a} \}
\Bigr\}.
\end{equation}
\end{theorem}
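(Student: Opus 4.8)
The plan is to assemble the statement from the chain of event-equivalences already built up in the text, together with one clean-up step that discards the ``wrong server'' contributions. First I would recall what has been established just before the theorem: using Lemma \ref{l.D.le.orderU} and Lemma \ref{irv3} (applied with $X=\min_{i\in J}U_i$, $Y=\eta$, which is legitimate because $\min_{i\in J}U_i$ is intermediate regularly varying with tail $\asymp \overline B_r(x)^{s-k}$ while $\eta$ is light-tailed), one obtains
\begin{eqnarray*}
\{D_0>x\}
&\sim& \bigcup_{|J|=s-k} \bigcap_{i\in J}\Bigl(\bigcup_{n>0}\{D_0>x,\ \sigma_{-n,i}>x+n\widehat a\}\Bigr),
\end{eqnarray*}
where the replacement of $U_i>x$ by its single-big-jump event $\bigcup_{n}\{\sigma_{-n,i}>x+n\widehat a\}$ is the standard large-deviation description of the supremum of a random walk with subexponential (here intermediate regularly varying) increments, valid because the upper and lower bounds for ${\mathbb P}\{D_0>x\}$ are of the same order (Corollary \ref{regularly.case}).

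Next I would carry out the splitting step that is sketched in the excerpt: for each $i\in J$ and each $n$ write
\begin{eqnarray*}
\{D_0>x,\ \sigma_{-n,i}>x+n\widehat a\}
&=& \{D_0>x,\ \sigma_{-n,i}>x+n\widehat a,\ i_{-n}=i\}\\
&&{}\cup\ \{D_0>x,\ \sigma_{-n,i}>x+n\widehat a,\ i_{-n}\ne i\},
\end{eqnarray*}
and observe that on the event $\{i_{-n}\ne i\}$ the service time $\sigma_{-n,i}$ is not used by the $D/GI/s$ system at step $-n$, hence $\{D_0>x,\ i_{-n}\ne i\}$ is independent of $\sigma_{-n,i}$ and
\begin{eqnarray*}
{\mathbb P}\{D_0>x,\ \sigma_{-n,i}>x+n\widehat a,\ i_{-n}\ne i\}
&\le& {\mathbb P}\{D_0>x\}\,\overline B(x+n\widehat a).
\end{eqnarray*}
Summing over $n>0$ and over the finitely many $i\in J$ and finitely many $J$, the total probability of all the ``$i_{-n}\ne i$'' terms is $O\bigl({\mathbb P}\{D_0>x\}\sum_{n>0}\overline B(x+n\widehat a)\bigr)=O\bigl({\mathbb P}\{D_0>x\}\,\overline B_r(x)\bigr)=o\bigl({\mathbb P}\{D_0>x\}\bigr)$. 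Therefore these terms may be removed in the $\sim$-sense, and what survives is
\begin{eqnarray*}
\{D_0>x\} &\sim& \bigcup_{|J|=s-k}\bigcap_{i\in J}\Bigl(\bigcup_{n>0}\{D_0>x,\ \sigma_{-n}>x+n\widehat a,\ i_{-n}=i\}\Bigr),
\end{eqnarray*}
using that $\sigma_{-n}=\sigma_{-n,i_{-n}}$ by the coupling. On the right-hand side, for a fixed $J$ the condition picks, for each $i\in J$, an index $n$ with $i_{-n}=i$; since the $i$'s in $J$ are distinct, the chosen indices $n$ are automatically distinct, so the union over $J$ and the intersections collapse into the single union over increasing tuples $0<n_1<\dots<n_{s-k}$ with $\sigma_{-n_j}>x+n_j\widehat a$ for all $j$. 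This is exactly the right-hand side of (\ref{bigj1}), which gives the claimed event-equivalence; passing to probabilities yields ${\mathbb P}\{D_0>x\}\sim{\mathbb P}\{\dots\}$.

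The one point that needs genuine care — and which I expect to be the main obstacle — is the reduction from $U_i>x$ to the single-big-jump event \emph{inside} the intersection over $i\in J$, i.e. justifying that replacing each $\{U_i>x\}$ by $\bigcup_n\{\sigma_{-n,i}>x+n\widehat a\}$ is legitimate not just marginally but jointly, so that the whole event $\bigcap_{i\in J}\{U_i>x\}$ is $\sim$-equivalent to $\bigcap_{i\in J}\bigcup_n\{\sigma_{-n,i}>x+n\widehat a\}$ and, after intersecting further with $\{D_0>x\}$, the equivalence survives. Here one uses that the coordinates $U_i$ (equivalently, the columns $\{\sigma_{-n,i}\}_{n}$) are mutually independent, so the symmetric-difference estimates multiply correctly across $i\in J$, together with the fact — invoked in the excerpt via \cite[Theorem 7]{BaF} — that the matching lower bound of order $\overline B_r(x)^{s-k}$ for ${\mathbb P}\{D_0>x\}$ is available, which is what makes an $o(\overline B_r(x)^{s-k})$ error term genuinely negligible relative to ${\mathbb P}\{D_0>x\}$. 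All remaining steps are the routine bookkeeping indicated above.
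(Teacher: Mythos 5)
Your proposal is correct and takes essentially the same route as the paper's own argument: Lemma \ref{l.D.le.orderU} together with Lemma \ref{irv3} to discard $\eta$, the single-big-jump representation of each $\{U_i>x\}$ backed by the matching lower bound of order $\overline B_r^{\,s-k}(x)$, and the split according to $i_{-n}=i$ versus $i_{-n}\neq i$ with the wrong-server contributions bounded by $O({\mathbb P}\{D_0>x\}\,\overline B_r(x))=o({\mathbb P}\{D_0>x\})$. The only (harmless) difference is that you make explicit the final collapse of $\bigcup_J\bigcap_{i\in J}$ into the union over increasing tuples $0<n_1<\ldots<n_{s-k}$, which the paper leaves implicit.
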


{\bf Remark.}
In the proof of Theorem \ref{bigjumps},
we followed the scheme introduced in \cite{FK},
see also \cite{BaF}, \cite{BMZ}, and \cite{BZ}
for similar constructions.
Theorem \ref{bigjumps} is not the final statement.
We may go further and obtain the following result.
Assume that $B$ is a regularly varying distribution.
Then,
for some positive and finite constant $C$ and as $x\to\infty$,
\begin{equation}\label{equivalence1}
{\mathbb P} \{D_0>x\} \sim C \overline{B}^{s-k}_{r}(x).
\end{equation}
The result seems to be correct, but its proof would be very lengthy and
would require a
scrupulous calculation, so we decided not to proceed further in this
direction.

We provide a hint for a plausible proof
instead. First, one may consider an auxiliary
deterministic model with  $(n-s)$ very big service times
$y_1$, \ldots, $y_{s-k}$ that occur
at time instants $-n_1>-n_2>\ldots >-n_{s-k}$ and replace all
other service times by their mean $b$. We also assume that, before
the first jump, the workload vector is zero.
For this model, we may find conditions on the $y$'s for the
minimal coordinate of the workload vector at time 0 to be not
smaller than $x$. Then repeat the same for all the other times
of jumps $-n_{1}>-n_{2}>\ldots >-n_{s-k}$. The union of these
regions may be represented as a combination of unions and differences
of a finite number of truncated half-spaces of  dimension $s-k$.
Summation of tail probabilities over each such set gives the
probability of order  $\overline{B}^{s-k}_{r}(x)$, thanks to the properties
of regularly varying functions. So a finite combination of sums and
differences of these probabilities gives a probability of the same
order. It cannot be of a lower order, due to the lower
bound.

\begin{proofof}{Lemma \ref{irv3}}
From Theorem 2.47 in \cite{FKZ}, if $X$ has
an intermediate regularly varying distribution, then
$$
{\mathbb P}\{X>x+h(x)\}\sim {\mathbb P}\{X>x\}
\sim {\mathbb P}\{X>x-h(x)\}
$$
as $x\to\infty$, for any function $h(x)\to\infty$ such
that $h(x)=o(x)$. Hence, by the monotonicity arguments,
$$
\{X>x+h(x)\}\sim \{X>x\}\sim \{X>x-h(x)\}.
$$
Since the distribution of $X$ is intermediate regularly
varying and since ${\mathbb P}\{|Y|>x\}=o({\mathbb P}\{X>x\})$
as $x\to\infty$, we have 
${\mathbb P}\{|Y|>\varepsilon x\}=o({\mathbb P}\{X>x\})$
as $x\to\infty$, for every $\varepsilon>0$. Then 
there exists $h(x)=o(x)$ such that
$$
{\mathbb P}\{|Y|>h(x)\}=o({\mathbb P}\{X>x\}).
$$
Therefore, as $x\to\infty$,
\begin{eqnarray*}
\{X>x\} &\sim& \{X>x+h(x)\}\setminus\{Y<-h(x)\}\\
&=& \{X>x+h(x),Y\ge-h(x)\} \subseteq \{X+Y>x\}
\end{eqnarray*}
and
\begin{eqnarray*}
\{X>x\} &\sim& \{X>x-h(x)\}\cup\{Y>h(x)\}
\supseteq \{X+Y>x\}, 
\end{eqnarray*}
which justifies the events equivalence,
$\{X+Y>x\} \sim \{X>x\}$.
\end{proofof}

\section{Existence of moments: proof of Theorem 4.}\label{proof}

Since the tail distribution of $\min(\sigma_{r,1},\ldots,\sigma_{r,s-k})$
is equal to $(\overline B_r(x))^{s-k}$, we obtain from
Theorem \ref{th.lower}
$$
{\mathbb P}\{D>x\} \ge
c_1{\mathbb P}\{\min(\sigma_{r,1},\ldots,\sigma_{r,s-k})
>c_2x\}.
$$
Since, for any non-negative random variable $\eta$,
$$
{\mathbb E}\eta^\gamma = \gamma\int_0^\infty x^{\gamma-1}
{\mathbb P}\{\eta>x\}dx,
$$
we have
$$
{\mathbb E}D^\gamma \ge \frac{c_1}{c_2}
{\mathbb E}(\min(\sigma_{r,1},\ldots,\sigma_{r,s-k}))^\gamma.
$$
and the existence of the moment of order $\gamma$
for the delay $D$ implies with necessity (\ref{min.fin}).

Now assume (\ref{min.fin}). Consider $s-k$
independent copies $M_1$, \ldots, $M_{s-k}$ of the
random variable $M$ introduced in Theorem \ref{th.deter.upper}.
Then the assertion of Theorem \ref{th.deter.upper}
can be rewritten in the following way:
$$
{\mathbb P}\{D>x+y\} \le
\Big(\begin{array}{c}s\\k\end{array}\Big)
{\mathbb P}\{\min(M_1,\ldots,M_{s-k})>x\}
+const\cdot e^{-\beta y}.
$$
Take $y=x$. Then ${\mathbb E}D^\gamma<\infty$ follows
if we prove that
\begin{equation}\label{M1.Ms-k}
{\mathbb E}(\min(M_1,\ldots,M_{s-k}))^\gamma < \infty.
\end{equation}
In order to do it, we explore the ladder height construction
for the maximum $M$ of a random walk $S_n=X_1+\ldots+X_n$
where $X_j=\sigma_j-b-\varepsilon$.
Since this random walk has a negative drift,
the first ladder epoch and the first ladder height
$$
\theta=\min(n\ge 1:S_n>0), \quad \widetilde\chi=S_{\theta},
$$
both are degenerate random variables;
$$
p\equiv {\mathbb P}\{\theta<\infty\}={\mathbb P}\{M>0\}<1.
$$
Denote by $\chi$ a random variable with distribution
$$
{\mathbb P}\{\chi\in B\}={\mathbb P}\{\widetilde\chi\in B\}/p.
$$
Let $\chi_j$ be independent copies of $\chi$.
If $\eta$ is an independent counting random variable with
distribution ${\mathbb P}\{\eta=j\}=(1-p)p^j$, $j=0$, $1$, \ldots,
then $M$ is equal in distribution to $\chi_1+\ldots+\chi_\eta$.

Let $\chi_{i,j}$ be again independent copies of $\chi$
and $\eta_j$ be independent copies of $\eta$.
Then $\min(M_1,\ldots,M_{s-k})$ is equal in distribution to
$$
\min\biggl(\sum_{j=1}^{\eta_1}\chi_{1,j},\ldots,
\sum_{j=1}^{\eta_{s-k}}\chi_{s-k,j}\biggr).
$$
The latter
minimum does not exceed
$$
\sum_{j_1=1}^{\eta_1}\ldots\sum_{j_{s-k}=1}^{\eta_{s-k}}
\min(\chi_{1,j_1},\ldots,\chi_{s-k,j_{s-k}}).
$$
Taking into account that for non-negative arguments
$$
(x_1+\ldots+x_N)^\gamma\le N^\gamma(x_1^\gamma+\ldots+x_N^\gamma),
$$
we get the following estimate:
\begin{eqnarray*}
\min\biggl(\sum_{j=1}^{\eta_1}\chi_{1,j},\ldots,
\sum_{j=1}^{\eta_{s-k}}\chi_{s-k,j}\biggr)^\gamma
&\le& (\eta_1+\ldots+\eta_{s-k})^\gamma
\sum_{j_1=1}^{\eta_1}\ldots\sum_{j_{s-k}=1}^{\eta_{s-k}}
\min(\chi_{1,j_1},\ldots,\chi_{s-k,j_{s-k}})^\gamma.
\end{eqnarray*}
In particular, the mean of the term in the left side of the equality above
is not larger than
\begin{eqnarray*}
\lefteqn{\sum_{j_1=1}^\infty\ldots\sum_{j_{s-k}=1}^\infty
(1-p)p^{j_1+\ldots+j_{s-k}}(j_1+\ldots+j_{s-k})^\gamma
j_1\cdot\ldots\cdot j_{s-k}
{\mathbb E}\min(\chi_{1,1},\ldots,\chi_{s-k,1})^\gamma}\\
&&\hspace{40mm} = {\mathbb E}(\eta_1+\ldots+\eta_{s-k})^\gamma
\eta_1\ldots\eta_{s-k}
{\mathbb E}\min(\chi_{1,1},\ldots,\chi_{s-k,1})^\gamma.
\end{eqnarray*}
Since the $\eta$'s have finite exponential moments,
the first mean on the right is finite.
Now we show finiteness of the second mean. First,
$$
{\mathbb P}\{\chi>x\} =
\int_{-\infty}^0 \overline B(x-y)\mu(dy),
$$
where the measure $\mu$ is defined by
\begin{eqnarray*}
\mu(dy) &=& \sum_n {\mathbb P}\{S_n\in dy, S_k\le 0
\mbox{ for all }k\le n-1\}\\
&\le& \sum_n {\mathbb P}\{S_n\in dy\}.
\end{eqnarray*}
Then, by the key renewal theorem,
$$
c\equiv \sup_{y\le 0}\mu(y-1,y] < \infty,
$$
which yields
$$
{\mathbb P}\{\chi>x\} \le c \sum_{j=0}^\infty
\overline B(x+j)\le c\overline B_r(x-1).
$$
Therefore, due to condition (\ref{min.fin}),
$$
{\mathbb E}\min(\chi_{1,1},\ldots,\chi_{s-k,1})^\gamma<\infty,
$$
which completes the proof.

\acknowledgments{This research was supported
by EPSRC grant No.~R58765/01
and RFBR grant No.~10-01-00161.}
The authors thank Bert Zwart for stimulating discussions, 
and Arcady Shemyakin and James Cruise for stylistic comments.

\small

\end{document}